\crefname{hypothesis}{Hypothesis}{Hypotheses}
\title{Asymptotic Analysis of a Coupled System of Nonlocal Equations with Oscillatory Coefficients\thanks{Support from NSF DMS-1910180 and DMS-1615726 is gratefully acknowledged.}}
\author{James M. Scott\thanks{The University of Tennessee, Knoxville, TN 
  (\email{mengesha@utk.edu}, \email{scott@math.utk.edu}).}
\and Tadele Mengesha\footnotemark[2]}
\newcommand{\vphi}{\varphi}
\DeclareMathOperator*{\rarrowop}{\longrightarrow}
\DeclareMathOperator*{\supp}{supp}
\DeclareMathOperator*{\dist}{dist}
\DeclareMathOperator*{\divergence}{div}
\newcommand{\intdm}[3]{\displaystyle \int_{#1} #2 \, \mathrm{d}#3}
\newcommand{\iintdm}[5]{\int_{#1} \int_{#2}  #3 \, \mathrm{d}#4 \, \mathrm{d}#5}
\newcommand{\intdmt}[4]{ \int_{#1}^{#2} #3 \, \mathrm{d}#4}
\newcommand{\EquationReference}[2]{\mathrel{\overset{\makebox[0pt]{\mbox{\normalfont\tiny\sffamily #1}}}{#2}}}
\newcommand{\bfeta}{\boldsymbol{\eta}}
\newcommand{\bfs}{\boldsymbol}
\begin{document}

\maketitle

\numberwithin{equation}{section}

\begin{keywords}
peridynamics, systems of integro-differential equations, periodic homogenization, elliptic systems, asymptotic compatibility, nonlocal-to-local limit
\end{keywords}

\begin{AMS}
  35R09, 74Q05, 35J47
\end{AMS}

\begin{abstract}
In this paper we study the asymptotic behavior of solutions to systems of strongly coupled integral equations with oscillatory coefficients. The system of equations is motivated by a peridynamic model of the deformation of heterogeneous media that additionally accounts for short-range forces. We consider the vanishing nonlocality limit on the same length scale as the heterogeneity and show that the system's effective behavior is characterized by a coupled system of local equations that are elliptic in the sense of Legendre-Hadamard. This effective system is characterized by a fourth-order tensor that shares properties with Cauchy elasticity tensors that appear in the classical equilibrium equations for linearized elasticity.
\end{abstract}

\section{Introduction and statement of main results}
Given $m>0$ and a vector field ${\bf f} \in \big[ L^{2}(\mathbb{R}^{d}) \big]^d$, we study the asymptotic behavior of solutions to the system of equations given by
\begin{equation}\label{lead-equation}
(m\mathbb{I} - \mathbb{L}^{\epsilon}) {\bf u} = {\bf f}\quad \quad \text{ in $\mathbb{R}^{d}$}\,,
\end{equation}
where for each $\epsilon >0,$ the operator $\mathbb{L}^{\epsilon}$ is defined as 
\begin{equation}\label{lead-operator}
\bbL^{\veps} \bu(\bx) := \frac{\lambda \left( \bx, \frac{\bx}{\veps} \right)}{\veps^{d+2}}  \intdm{\bbR^d}{ \frac{{\mu} \left({\bx\over \veps}\right)+ {\mu}\left(\frac{\by}{\veps} \right)}{2} \rho \left( \frac{\bx-\by}{\veps} \right) \frac{(\bx-\by) \otimes (\bx-\by)}{|\bx-\by|^2} \big( \bu(\by) - \bu(\bx) \big)}{\by}
\end{equation}
with $\rho \in L^1(\bbR^d)$ and the functions $\lambda(\bx, {\bf y})$ and ${\mu}({\bf y})$ are bounded and nondegenerate.
We also assume that ${\mu}(\by)$ is periodic while $\lambda(\bx,\by)$ is periodic in the second variable, both with unit period. 

We will show that for a given ${\bf f}\in  \big[ L^{2}(\mathbb{R}^{d}) \big]^d$, the sequence of solutions ${\bf u}^{\epsilon}$ converge strongly in $\big[ L^{2}(\mathbb{R}^{d}) \big]^d$. Moreover, the limiting vector field solves a strongly coupled system of partial differential equations whose coefficients depend on the effective properties of $\lambda$ and ${\mu}$. 

 This work is motivated by the multiscale analysis of the displacement of heterogeneous media in the peridynamic formulation \cite{Silling2000,Silling2007}, a non-local continuum theory for deformable media that incorporates long range interactions of material points via a force field. Parametrized operators of the type $\mathbb{L}^{\epsilon}$ were  first introduced in the work \cite{Alali2012Multiscale} as a means of representing short range forces in the modeling of the deformation of media with heterogeneities. These short range forces are represented at a ``microscopic" scale relative to the heterogeneities, hence the dependence of $\mu$ and $\lambda$ on $\bx/\veps$.
In the absence of the diffusive scaling $\veps^{-2}$ in the operator $\mathbb{L}^{\epsilon}$, the asymptotic properties of solutions of \eqref{lead-equation} is studied in 
\cite{Alali2012Multiscale,du2016multiscale,DuMengeshaMultiscale15} for both stationary and dynamic problems. In this special case, using the method of two scale convergence \cite{Allaire-Twoscale,Nguetseng-twoscale}, it has been shown that solutions to the nonlocal system of equations that use the operator  \eqref{lead-operator} indeed exhibit multiscale behavior and most importantly, their effective or homogenized behavior can be captured by a vector field that is a solution to {\em a (homogenized)  system of nonlocal equations}. It has also been proved that in the presence of the diffusive scaling $\veps^{-2}$ but in the absence of  oscillatory heterogeneity (i.e. $\lambda  = {\mu}=1$) that the integral operator $\mathbb{L}^{\epsilon}$ converges to the Lam\'e-Navier differential operator from classical linearized elasticity and solutions to the peridynamic-type nonlocal system of equations \eqref{lead-equation} also converge to the solution to the corresponding equations of linearized elasticity. This, which is usually referred as nonlocal-to-local convergence, has been demonstrated throughout the literature; see for example \cite{Emmrich-Weckner2007,MengeshaDuElasticity,Du-Zhou2010,Silling2008}. This paper makes an effort to marry these two sets of results and thus attempts to provide a more complete picture of asymptotic regimes in peridynamic formulations. To be precise, we study the asymptotic properties of the operator $\mathbb{L}^{\epsilon}$ and solutions to \eqref{lead-equation} in the presence of both the diffusive scaling and the oscillatory coefficients corresponding to the same length scale.

Periodic and stochastic homogenization of integro-differential operators is currently developing in a variety of directions; see \cite{schwab2010periodic, bonder2017h} for some examples. The asymptotic analysis we present here follows the argument presented in the recent paper \cite{Piatnitski-Zhizhina} which is focused on the homogenization of nonlocal equations that are based on integral operators with convolution-type kernels. The approach is essentially the classical H--convergence of elliptic (differential) operators \cite{Murat-Tartar,Tartar,jikov2012homogenization} 
applied to nonlocal problems with integrable kernels. It turns  out that, although the operator in \eqref{lead-operator} is vector-valued and the resulting system is strongly coupled, the operator shares important features with the scalar-valued nonlocal operator studied in \cite{Piatnitski-Zhizhina}. In fact, following the approach in \cite{Piatnitski-Zhizhina} we will show that the sequence of  operators $\bbL^{\veps}$ will ``converge" to a second-order system of elliptic differential operators in nondivergence form with variable coefficients.
This convergence will be demonstrated via convergence of resolvents, i.e.\ for large enough $m>0$ the operators $(m \bbI - \bbL^{\veps})^{-1}$ converge strongly in $\big[ L^2(\bbR^d) \big]^d$ to the operator $(m\bbI - \bbL^0)^{-1}$, where  
\begin{equation*}
\bbL^0 \bv (\bx) = \mathfrak{C}(\bx) D^{2}{\bf v}(\bx)\,, \qquad  [\mathfrak{C}(\bx) D^{2}{\bf v}(\bx)]_i := \sum_{\ell jk}c_{ijk \ell}(\bx) \frac{\p^2 v^{\ell}}{\p x^j \p x^k}
\end{equation*}
and $\bbI$ denotes the $d \times d$ identity matrix. The limiting system of partial differential operators closely resembles the equilibrium equation for linearized elasticity in the sense that its  fourth-order tensor of coefficients satisfies the Cauchy symmetry relations from linearized elasticity. 

To state the precise statement of the main result, let us fix some notations. We denote the space of $d \times d$ matrices with real coefficients by $\bbM_d(\bbR)$.
If a vector field $\bv = (v^1, v^2, \ldots, v^d) :\bbR^d \to \bbR^d$ has the property that each of its components $v^i$ belongs to a function space $X$, then we denote the associated vector space of vector fields by $\big[ X \big]^d$. For example, Lebesgue spaces of vector fields will be denoted $\big[ L^p(\bbR^d) \big]^d$. Function spaces of higher-order tensor fields $\mathfrak{V} = (V^{i_1 i_2 \ldots i_n}) : \bbR^d \to \bbR^{d^n}$ will be denoted similarly, i.e.\ the Lebesgue space is written as $\big[ L^p(\bbR^d) \big]^{d^n}$. We denote the space of Schwarz functions by $\cS(\bbR^d)$.  We denote the space of bounded linear operators from a Banach space $Y_1$ to a Banach space $Y_2$ by $\cL(Y_1,Y_2)$.
We designate the set of infinitesimal rigid displacements $\cM$ as
\begin{equation*}
\cM := \left\lbrace \bv(\bx) \, : \, \bv(\bx) =\boldsymbol{\rmQ}\bx + \bm\,, \quad \boldsymbol{\rmQ} \in \bbM_d(\bbR)\,, \quad \boldsymbol{\rmQ}^{\intercal} = - \boldsymbol{\rmQ}\,, \quad \bm \in \bbR^d \right\rbrace\,.
\end{equation*}
The kernel $\rho(\bz)$, in addition to being in $L^1(\bbR^d)$ also satisfies 
\begin{equation}\label{eq:A1}
\begin{split}
\rho(\bz) \geq 0\,, &\qquad \rho(-\bz)=\rho(\bz)\,, \\
	\intdm{\bbR^d}{ \rho(\bz)}{\bz} := a_1 > 0\,, &\qquad \intdm{\bbR^d}{|\bz|^2 \rho(\bz)}{\bz} := a_2 < \infty\,.
\end{split} \tag{A1}
\end{equation}
We also assume that $\rho$ is nondegenerate in the sense that there exists  $\delta_0 > 0$ and a symmetric cone $\Lambda$  with vertex at the origin such that
\begin{equation}\label{eq:A2}
 \Lambda \cap B_{\delta_0}({\bf 0}) \subset \supp \rho\,. 
\tag{A2}
\end{equation}
By ``symmetric cone with vertex at the origin" we mean that there exists an open subset $\cJ$ of the unit sphere  $\bbS^{d-1}$ with  $\cH^{d-1}(\cJ) > 0$ such that the set $\Lambda$ can be written as 
$\Lambda = \left\lbrace \bx \in \bbR^d \, : \, \frac{\bx}{|\bx|} \in \cJ \cup -\cJ \right\rbrace\,.$
The function ${\mu}(\by)$ is assumed to be periodic in $\bx$ and $\by$ with unit period   $\bbT^d := [0,1]^d$ while for each $\bx$, $\lambda(\bx,\cdot)$ is assumed to be periodic in the second variable. Moreover, 
\begin{equation}\label{eq:A3}
\begin{cases}
&0 < \alpha_1 \leq \lambda \left( \bx, \by \right)\,, {\mu}(\by) \leq \alpha_2 < \infty\,, \qquad \bx, \by \in \bbR^d\,, \\
&\forall \by\in \mathbb{R}^{d}, \lambda(\cdot, \by) \in C^{\infty}(\mathbb{R}^{d}). 
\end{cases} \tag{A3}
\end{equation}
Throughout this work we will identify periodic functions defined on all of $\bbR^d$ with functions defined on the torus $\bbT^d = [0,1]^d$. A consequence of the periodicity, such functions satisfy the identity $\intdm{\bbT^d}{g(\by+\bx)}{\bx} = \intdm{\bbT^d}{g(\bx)}{\bx}$, for every $\by \in \bbR^d$\,.  For any $1\leq p<\infty$, the $L^{p}$ space of periodic functions is defined as 
\begin{equation*}
L^{p}(\bbT^{d}) = \{g: \mathbb{R}^{d}\to \bbR : \text{g periodic and } \int_{\bbT^{d}}|g|^{p} \, \rmd \bx <\infty  \}\,. 
\end{equation*}
For a given positive integer $k$, the Sobolev space $\big[ H^{k}(\bbR^d) \big]^d$ consists of functions whose $j^{th}$-order tensor of partial derivatives belongs to $\big[ L^2(\bbR^d) \big]^{d^{j+1}}$ for all $j \leq k$; precisely,
$$
\big[ H^{k}(\bbR^d) \big]^d := \left\lbrace \bu \in \big[ L^2(\bbR^d) \big]^d \, \Big| \, |D^j \bu|   \in  L^2(\bbR^d)\,, \, ) \leq j \leq k \right\rbrace\,,
$$
with norm
$$
\Vnorm{\bu}_{H^{k}(\bbR^d)} :=  \sum_{0 \leq j \leq k} \Vnorm{D^j \bu}_{L^2(\bbR^d)}\,.
$$
It is standard that ${\bu}\in \big[ H^{k}(\bbR^d) \big]^d$ if and only if ${\bu }$ and all of its partial derivatives of up to $k^{th}$ order are all in $\big[ L^{2}(\mathbb{R}^{d}) \big]^d$.  Given a vector field $\bu$, $D{\bf u}$ denotes  the gradient matrix of ${\bf u}$ given by $(D{\bf u})_{ij}:={\p u_{i} \over \p{x_j}} $, and $D^{2}{\bf u}$ is the third-order tensor of second partial derivatives of ${\bf u}$ given by $((D^{2}{\bf u})_{ijk}:={\p^2 u^{i}\over \p{x_k}\p{x_j} } )$. Higher order tensors of partial derivatives will also be denoted in a similar fashion. Finally, we use the standard ``contraction of indices" convention and Einstein summation notation when denoting actions of tensors unless specified otherwise.
We can now state the main result of the paper.
\begin{theorem}\label{thm:MainThm}
Suppose that $\rho, {\mu_s}$ and $\lambda$ satisfy \eqref{eq:A1}, \eqref{eq:A2}, and \eqref{eq:A3}. Then there exists a constant $m_0 >0$ such that for all $m\geq m_0$,  the resolvents $(m\bbI - \bbL^{\veps})^{-1}$ converge strongly to $(m\bbI - \bbL^0)^{-1}$ as $\veps \to 0$. Precisely, for every $\bff \in \big[ L^2(\bbR^d) \big]^d$, if $\bu^{\veps}$ is a solution to \eqref{lead-equation}, then 
\begin{equation*}
\Vnorm{\bu^{\veps} - \bu^0}_{L^2(\bbR^d)} \to 0 \quad \text{ as } \quad \veps \to 0\,,
\end{equation*}
where 
$\bu^0$ solves the equation  
\begin{equation}\label{eq:u0Def}
\left(m\mathbb{I} - \mathbb{L}^{0}\right){\bf u} = {\bf f} \quad\text{in $\mathbb{R}^{d}$}\,,
\end{equation}
and the operator $\mathbb{L}^{0}:\big[ H^2(\bbR^d) \big]^d \to \big[ L^2(\bbR^d) \big]^d$ is a second-order system of linear differential operators
$
\mathbb{L}^{0}{\bf u} = \mathfrak{C}(\bx) D^{2}{\bf u}
$ 
whose tensor of coefficients $\mathfrak{C}(\bx)$ is elastic and is infinitely differentiable. 
\end{theorem}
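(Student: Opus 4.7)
The plan is to follow the resolvent-convergence strategy of Piatnitski--Zhizhina, adapted to the strongly coupled vector-valued setting. The argument proceeds in four stages: uniform a priori estimates; a two-scale expansion that simultaneously resolves the oscillation of $\mu$ and $\lambda(\bx,\cdot)$ and the vanishing interaction range of the kernel; identification of the limit via a periodic cell problem; and an upgrade from weak to strong $L^2$ convergence.

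First I would establish that, for sufficiently large $m$, the operators $m\bbI - \bbL^{\veps}$ are uniformly invertible on $[L^2(\bbR^d)]^d$. Boundedness of $\bbL^{\veps}$ with a norm bound independent of $\veps$ follows from \eqref{eq:A3} together with $\rho \in L^1(\bbR^d)$ and the bond-projector structure; taking $m_0$ strictly larger than this norm bound gives $\Vnorm{\bu^{\veps}}_{L^2} \leq C\Vnorm{\bff}_{L^2}$. Weak-$L^2$ compactness then yields a subsequence $\bu^{\veps} \rightharpoonup \bu^0$, and all of the work is in identifying $\bu^0$.

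Next, I would posit the two-scale ansatz $\bu^{\veps}(\bx) \approx \bu^0(\bx) + \veps\, \boldsymbol{\chi}^{1}(\bx,\bx/\veps)[D\bu^0(\bx)] + \veps^2\, \boldsymbol{\chi}^{2}(\bx,\bx/\veps)[D^2\bu^0(\bx)]$ with $\bbT^d$-periodic correctors $\boldsymbol{\chi}^{k}(\bx,\cdot)$ that are linear in their bracketed argument. Substituting into $\bbL^{\veps}$, changing variables $\bz = (\bx-\by)/\veps$, and Taylor-expanding $\bu^0$ and the correctors in $\veps\bz$, the factor $\veps^{-d-2}$ is absorbed by the Jacobian $\veps^d$ and by two derivatives from the Taylor expansion, so the integrand is regular on the unit scale and contributes through the first and second moments of $\rho$ weighted by the bond tensor $\bz\otimes\bz/|\bz|^2$. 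Matching powers of $\veps$ produces a linear cell equation on $\bbT^d$ whose data depends smoothly on $\bx$ through $\lambda(\bx,\cdot)$; solvability and smooth dependence of $\boldsymbol{\chi}^{k}$ on $\bx$ follow from Fredholm theory on the torus and \eqref{eq:A3}. Cell averaging at the next order identifies the homogenized tensor $\mathfrak{C}(\bx)$ and produces \eqref{eq:u0Def}. The Cauchy symmetries of $\mathfrak{C}(\bx)$ descend directly from the bond-tensor structure $\bz\otimes\bz/|\bz|^2$ together with $\rho(-\bz)=\rho(\bz)$, while strict Legendre--Hadamard ellipticity uses the directional nondegeneracy \eqref{eq:A2}, which ensures that the second moment of $\rho$ restricted to any direction cone is strictly positive.

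To rigorize the ansatz I would use the oscillating-test-function method: test $(m\bbI - \bbL^{\veps})\bu^{\veps} = \bff$ against $\boldsymbol{\phi}(\bx) + \veps\, \boldsymbol{\chi}^{1}(\bx,\bx/\veps)[D\boldsymbol{\phi}(\bx)]$ for $\boldsymbol{\phi} \in [\cS(\bbR^d)]^d$, exploit the symmetry obtained by interchanging $\bx \leftrightarrow \by$ in the double integral (invariance of the kernel under this swap follows from $\rho(-\bz)=\rho(\bz)$ and the symmetric averaging of $\mu$), and use weak-star convergence of periodic oscillations in $L^\infty$ together with the finite second moment of $\rho$ to dominate the remainder terms. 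This identifies the weak limit of $\bbL^{\veps}\bu^{\veps}$ as $\mathfrak{C}(\bx) D^2\bu^0$. Strong $L^2$ convergence then follows by comparing $\bu^{\veps}$ with the two-scale approximant: the residual $\mathbf{r}^{\veps}$ satisfies $(m\bbI - \bbL^{\veps})\mathbf{r}^{\veps} = o(1)$ in $[L^2]^d$, and uniform invertibility forces $\Vnorm{\mathbf{r}^{\veps}}_{L^2} \to 0$.

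The hardest step is that the nonlocal interaction scale and the homogenization scale are \emph{locked together at $\veps$}, so neither classical two-scale convergence nor classical H-convergence applies directly; one must design correctors and test functions that resolve both effects at once. Compounding this, the target operator $\bbL^0$ is in nondivergence form, so the cell problems lack the usual variational structure and must be solved as non-selfadjoint equations on $\bbT^d$ with parameters, requiring care to track smoothness in $\bx$ of the correctors. A second delicate point is upgrading the pointwise Cauchy symmetries and the rank-one contributions of individual bonds to strict Legendre--Hadamard ellipticity of $\mathfrak{C}(\bx)$; this is precisely where \eqref{eq:A2} is indispensable, since it supplies a positive-measure cone of directions that together defeat the degeneracy of any single bond.
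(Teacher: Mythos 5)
Your overall architecture (corrector ansatz, cell problems solved by Fredholm theory on $\bbT^d$, comparison of $\bu^{\veps}$ with the two-scale approximant, and the uniform resolvent bound to absorb the residual) is essentially the paper's route, but your very first step contains a genuine gap that the rest of the argument leans on. You claim that $\bbL^{\veps}$ is bounded on $\big[L^2(\bbR^d)\big]^d$ \emph{with a norm bound independent of $\veps$}, and you then obtain the uniform a priori estimate by choosing $m_0$ strictly larger than that bound. This is false: because of the diffusive scaling $\veps^{-d-2}$, the operator norm behaves like $\Vnorm{\bbL^{\veps}}_{\cL} \lesssim \alpha_2^2 \Vnorm{\rho}_{L^1}/\veps^2$ and genuinely blows up as $\veps \to 0$ (it must, since the limit is a second-order differential operator, unbounded on $L^2$). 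No fixed $m_0$ dominates $\Vnorm{\bbL^{\veps}}$ along the whole sequence, so your derivation of $\Vnorm{\bu^{\veps}}_{L^2} \leq C\Vnorm{\bff}_{L^2}$ and, more importantly, of the $\veps$-uniform bound on $(m\bbI - \bbL^{\veps})^{-1}$ that your final step ``uniform invertibility forces $\Vnorm{\mathbf{r}^{\veps}}_{L^2} \to 0$'' requires, does not go through. The correct mechanism is structural, not perturbative: $\bbL^{\veps}$ is self-adjoint and nonpositive in the weighted space $L^2_{\nu_{\veps}}$ with weight $\nu_{\veps}(\bx) = 1/\lambda(\bx,\bx/\veps)$ (this uses the $\bx \leftrightarrow \by$ symmetry of $\rho$ and $\mu_s$ after dividing out the $\lambda$ prefactor), which gives $m\Vnorm{\bu}_{L^2_{\nu_\veps}} \leq \Vnorm{(m\bbI-\bbL^{\veps})\bu}_{L^2_{\nu_\veps}}$ and hence $\Vnorm{(m\bbI - \bbL^{\veps})^{-1}}_{\cL} \leq C/m$ uniformly in $\veps$, for \emph{every} $m>0$; the constant $C$ only reflects the ($\veps$-independent) equivalence of the weighted and unweighted norms from \eqref{eq:A3}. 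Relatedly, the threshold $m_0$ in the theorem is not needed on the nonlocal side at all: it comes from solvability of the limiting nondivergence-form elliptic system $(m\bbI - \bbL^0)\bu^0 = \bff$ in $\big[H^2(\bbR^d)\big]^d$, which you need anyway to have a smooth $\bu^0$ to feed into the ansatz for Schwartz data.

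Two smaller points. First, your weak-compactness plus oscillating-test-function stage is redundant once you have the corrector consistency estimate and the uniform resolvent bound: as in the paper, one starts from $\bu^0$ (smooth for $\bff$ Schwartz), forms $\bv^{\veps}$, shows $(m\bbI-\bbL^{\veps})\bv^{\veps} = \bff + o(1)$ in $L^2$, and concludes $\Vnorm{\bu^{\veps}-\bv^{\veps}}_{L^2}\to 0$ directly, finishing with density in $L^2$; moreover, if you do want to test the equation, note that $\bbL^{\veps}$ is symmetric only in the $\lambda$-weighted inner product, so the naive $\bx\leftrightarrow\by$ swap in the plain $L^2$ pairing does not produce the adjoint. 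Second, the cell problems are not non-selfadjoint in this model: the $\lambda(\bx,\bx/\veps)$ factor sits entirely in front of the operator, so the cell operator is $\bbK - \bbG$ on $\big[L^2(\bbT^d)\big]^d$, which is self-adjoint with kernel the constants; the $\bx$-dependence enters only through the compatibility condition, which is what forces $\mathfrak{C}(\bx) = \big(\int_{\bbT^d}\lambda(\bx,\by)^{-1}\,\rmd\by\big)^{-1}\widetilde{\cC}$ and hence the smoothness of $\mathfrak{C}$. Finally, for the lower bound in the ellipticity claim you will need the representation of $\widetilde{\cC}$ as a quadratic form in the corrector differences (not the raw formula coming from the compatibility condition, whose corrector cross-term has no sign), and only then does the cone condition \eqref{eq:A2} yield strict positivity on symmetric matrices.
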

Some remarks are in order.  In the theorem,  by {\em elastic tensor} we mean a fourth-order tensor $\mathfrak{C}(\bx)$  satisfying the symmetries
\begin{equation}\label{eq:ElasticityTensor1}
c^{ijk \ell}(\bx) = c^{k \ell ij}(\bx)\,, \qquad c^{ijk \ell}(\bx) = c^{ji k \ell}(\bx) = c^{ij \ell k}(\bx)
\end{equation}
for every $\bx \in \bbR^d$, and that for some positive constants $\gamma_1$ and $\gamma_2$ the inequalities
\begin{equation}\label{eq:ElasticityTensor2}
\begin{split}
\gamma_1 |\boldsymbol{\bbW}|^2 \leq & \Vint{\mathfrak{C}(\bx)\boldsymbol{\bbW}, \boldsymbol{\bbW}} = c^{ijk \ell}(\bx) w_{kl} w_{ij}\,, \\
& \Vint{\mathfrak{C}(\bx)\boldsymbol{\bbW}, \bbV} \leq \gamma_2 |\bbW| |\bbV| 
\end{split}
\end{equation}
hold uniformly in $\bx$ and for all symmetric matrices $\boldsymbol{\bbW} = (w_{ij})$ and $\bbV = (v_{ij})$. We have used the generic inner product notation $\langle\cdot, \cdot\rangle$. The tensor of coefficients $\mathfrak{C}$ of $\bbL^{0}$ will be defined in terms of tensor-valued corrector functions that solve a system of auxiliary cell problems. More importantly, for each $\bx\in\bbR^{d}, $
$\mathfrak{C}(\bx)$ is of the form
\begin{equation}\label{eq:ElasticityTensorForm}
\mathfrak{C}(\bx) =  \left(\int_{\bbT^{d}} {1\over \lambda(\bx, \by)} \, \rmd \by\right)^{-1} \widetilde{\mathcal{C}}, \quad \quad \text{where $\widetilde{\mathcal{C}}$ is a constant tensor.}
\end{equation}
As a consequence of this and the smoothness assumption \eqref{eq:A3} on $\lambda$, $\mathfrak{C}$ is infinitely differentiable. 

(In)equalities \eqref{eq:ElasticityTensor1} and \eqref{eq:ElasticityTensor2} are instrumental in showing that the resolvent $(m\bbI - \bbL^{0})^{-1}$ is well-defined. In fact, \eqref{eq:ElasticityTensor1} and \eqref{eq:ElasticityTensor2} imply that $\mathfrak{C}$ satisfies the Legendre-Hadamard
condition, namely $c^{ijk \ell}\in L^{\infty}(\mathbb{R}^{d})$ and 
\begin{equation*}
\langle\mathfrak{C}(\bx) {\bfs \xi}\otimes {\bfs \eta}, {\bfs \xi}\otimes {\bfs \eta}\rangle=c^{ijk \ell}(\bx)\xi_i\xi_k\eta_j\eta_\ell \geq \gamma_1|{\bfs \xi}|^{2}|{\bfs \eta}|^2,\quad\forall {\bfs \xi}=(\xi_i), \,{\bfs \eta} = (\eta_i)\in \bbR^{d}. 
\end{equation*}
This condition is sufficient to guarantee the existence of the resolvent $(m\bbI - \bbL^0)^{-1}$ using \textit{a priori} estimates for elliptic systems combined with the method of continuity \cite{DoKi09, krylov1996lectures, GiaquintaBook}. In fact, we can apply \cite[Theorem 2.6]{DoKi09} to conclude that there exists $m_0>0$ such that for any $\bff\in [L^{2}(\bbR^{d})]^{d}$ and any $m\geq m_0$, there exists a unique vector field ${\bf u}^{0}=(m\bbI - \bbL^0)^{-1}{\bff} \in [H^{2}(\bbR^{d})]^{d}$ with the estimate 
 \begin{equation*}
 \|{\bf u}^{0}\|_{H^{2}(\bbR^d} \leq C \|{\bf f}\|_{L^{2}(\bbR^d)}, 
 \end{equation*}
proving that the resolvent $(m\bbI - \bbL^0)^{-1}:  \big[ L^2(\bbR^d) \big]^d\to \big[ H^2(\bbR^d) \big]^d $ is a well-defined bounded operator. 
From the smoothness of $\mathfrak{C}$ and standard regularity theory we have that if ${\bff}\in C^{\infty}$, then so is $(m\bbI - \bbL^0)^{-1}{\bff}$, see \cite{DoKi09, krylov1996lectures, GiaquintaBook}.

Finally, if we take ${\mu_s} \equiv 1$ and take $\rho$ to be radial, i.e.\ $\rho(\bz) := \widetilde{\rho}(|\bz|)$, then the constant tensor $\widetilde{\cC} = (\widetilde{c}^{ijk \ell})$ defined in \eqref{eq:ElasticityTensorForm} is exactly the elasticity tensor associated to the Lam\'e-Navier system. 
Specifically, we have 
\begin{equation}\label{eq:LameTensor}
\widetilde{c}^{ijk \ell} = \frac{a_2}{2d(d+2)} \big( \delta_{ij} \delta_{k \ell} + \delta_{i k } \delta_{j \ell} + \delta_{i \ell} \delta_{j k} \big)\,.
\end{equation}
%Note that the Lam\'e parameters in the Lam\'e system specified by \eqref{eq:LameTensor} are both equal to $\frac{a_2}{2d(d+2)}$.
Above, $\delta_{ij}$ denotes the Kronecker $\delta$-function. It is straightforward to check that this constant tensor when used in the definition of $\bbL^0$ gives
\begin{equation}\label{eq:LameSystem}
\bbL^0 \bu(\bx) = {\mu}^0(\bx) \Delta \bu(\bx) + 2 {\mu}^0(\bx) \grad (\divergence \bu(\bx))\,,
\end{equation}
where the Lam\'e parameter ${\mu}^0(\bx)$ is defined as
\begin{equation*}
{\mu}^0(\bx) := \frac{a_2}{2d(d+2)} \left(\int_{\bbT^{d}} {1\over \lambda(\bx, \by)} \, \rmd \by\right)^{-1}\,.
\end{equation*}
This form of the limiting operator is expected; once the heterogeneity ${\mu_s}$ is removed from \eqref{lead-equation} the convergence then resembles the nonlocal-to-local limits considered in \cite{Emmrich-Weckner2007,MengeshaDuElasticity,Du-Zhou2010,Silling2008}. The computation of \eqref{eq:LameTensor} is summarized in the appendix. 

The paper is organized as follows: In Section \ref{tools} we collect tools and general results needed for subsequent sections. In Section \ref{asy} we set up the program of asymptotic analysis via correctors and prove Theorem \ref{thm:MainThm}, deferring proofs regarding existence of the correctors (such as the solvability of the auxiliary cell problem) to Section \ref{aux}.

\section{Tools and preliminaries}\label{tools}
In this section we collect tools that we will need throughout the paper. We will also prove some preliminary results related to the main operators of interest. Other general results that we need later in the paper will also be discussed. 
\subsection{Existence and uniform estimates for resolvents}
For $\rho$ satisfying the condition \eqref{eq:A1} and $\veps > 0$,  define $\rho_{\veps}(\bz) := {1\over \veps^{d}} \rho\left({\bz\over \veps}\right)$. Then $\rho_{\veps} \in L^1(\bbR^d)$ and $\int_{\mathbb{R}^{d}} \rho_{\veps}(\bz) \, \rmd {\bz}  = a_1$. We also denote the symmetrized form of ${\mu} $ by 
\begin{equation*}{\mu_s} (\bx, \by) = {1\over 2}({\mu}(\bx) + {\mu}(\by)), \quad \text{for $\bx, \by\in \bbT^{d}$}.\end{equation*}  The function ${\mu_s}$ is clearly periodic in both variables. We use this notational convention to introduce the matrix-valued functions 
\begin{equation}\label{eq:MatricesInOperator}
 \boldsymbol{\rmK}_{\veps}(\bx) := \rho_{\veps}(\bx) \left( \frac{\bx \otimes \bx}{|\bx|^2 }\right),\quad  \text{and, } 
 \boldsymbol{\rmG}_{\veps}(\bx) := \intdm{\bbR^d}{\boldsymbol{\rmK}_{\veps}(\bx-\by) {\mu_s} \left( \frac{\bx}{\veps}, \frac{\by}{\veps} \right)} {\by}\,, \quad \bx \in \bbR^d\,.
\end{equation}
Notice that for each $\bx,$ and any $\epsilon>0$, $ \boldsymbol{\rmK}_{\veps}(\bx)$ is positive semi-definite matrix and for any vector $\bv$, we have $\langle \boldsymbol{\rmK}_{\veps}(\bx){\bv}, {\bv}\rangle = \rho_{\veps}(\bx)\left|{{\bx} \over |\bx|}\cdot \bv \right|^{2} \geq 0$.  Moreover, by a change of variables, it is clear that $\boldsymbol{\rmK}_{\veps} \in \big[L^1(\bbR^d) \big]^{d \times d}$. The estimate \eqref{eq:A3} on $\mu$ gives $|\boldsymbol{\rmG}_{\veps}(\bx)| \leq \alpha_2 \Vnorm{\rho}_{L^1(\bbR^d)}$ for every $\bx \in \bbR^d$. 

Using these matrix-valued functions, we define the operators $\bbK_{\veps}$ and $\bbG_{\veps}$ for $\bv \in \big[L^2(\bbR^d) \big]^d$   by
\begin{equation*}
\begin{split}
\bbK_{\veps} \bv(\bx) := \intdm{\bbR^d}{ \boldsymbol{\rmK}_{\veps}(\bx-\by) {\mu_s}\big({\bx\over \veps}, {\by\over \veps}\big) \bv(\by)}{\by} \quad  \text{ and }  \quad \bbG_{\veps} \bv (\bx) :=   \boldsymbol{\rmG}_{\veps}(\bx) \bv(\bx)\,. 
\end{split}
\end{equation*}
For $\veps=1$, we simply write  $\boldsymbol{\rmK}(\bx)$, ${\bbK}$,  $\boldsymbol{\rmG}(\bx)$  and $\bbG$ as  opposed to $ \boldsymbol{\rmK}_{1}(\bx)$, ${\bbK}_{1}$, $ \boldsymbol{\rmG}_{1}(\bx)$, and ${\bbG}_{1}$,  respectively. 

We can then rewrite the main operator $\mathbb{L}_{\epsilon}$ introduced in \eqref{lead-operator} as 
\begin{equation*}
\begin{split}
\bbL^{\veps} \bv (\bx) &=  \frac{1}{\veps^2} \lambda \left( \bx, \frac{\bx}{\veps} \right) \Bigg[\intdm{\bbR^d}{ \boldsymbol{\rmK}_{\veps}(\bx-\by){\mu_s} \left( \frac{\bx}{\veps}, \frac{\by}{\veps} \right)  \bv(\by) }{\by} \\
&\qquad \qquad \qquad \qquad - \left(\intdm{\bbR^d}{ \boldsymbol{\rmK}_{\veps}(\bx-\by) {\mu_s} \left( \frac{\bx}{\veps}, \frac{\by}{\veps} \right) }{\by}\right) \bv(\bx)\Bigg] \\
&= \veps^{-2} \lambda \left( \bx, \frac{\bx}{\veps} \right) \left( \bbK_{\veps} {\bf v}(\bx) - \bbG_{\veps} {\bf v}(\bx)\right),
\end{split}
\end{equation*}
The operator $\bbK_{\veps}$ is a combination of convolution and multiplication operators, and can be rewritten as 
\begin{equation*}
\bbK_{\veps} {\bf v}(\bx)= \frac{1}{2}\big( \boldsymbol{\rmK}_{\veps} \ast ( \mathrm{M}_{\mu} \bv ) \big)(\bx) + \frac{1}{2} \big( \rmM_{\mu}  ( \boldsymbol{\rmK}_{\veps} \ast \bv ) \big) (\bx)
\end{equation*}
where $\mathrm{M}_{f}$ denotes the multiplication operator, i.e. $(\mathrm{M}_{f} \bv)(\bx) = f(\bx) \bv(\bx)$ for any bounded function $f$ and the convolution is defined in terms of matrix multiplication, i.e.
\begin{equation*}
\big[ \boldsymbol{\rmK} \ast \bv(\bx) \big]_i =  \int_{\bbR^d}  \boldsymbol{\rmK}^{ij}(\bx-\by) v_j(\by) \, \mathrm{d}\by\,.
\end{equation*}
We will use this formulation of the operator and the following result repeatedly throughout this work. The proof of the following lemma mimics that given in \cite{Piatnitski-Zhizhina} appropriately modified to fit our framework.

\begin{lemma}\label{exist-resolvent}
Assume that $\rho, {\mu}$ and $\lambda$ satisfy \eqref{eq:A1}, \eqref{eq:A2} and \eqref{eq:A3}. 
For each $\veps > 0$, the linear operator $\bbL^{\veps} : \big[ L^2(\bbR^d) \big]^d \to \big[ L^2(\bbR^d) \big]^d$ is  bounded. Moreover, for each $m > 0$, the operator $ m\mathbb{I} - \bbL^{\veps}: \big[ L^2(\bbR^d) \big]^d \to \big[ L^2(\bbR^d)]^{d}$ is an isomorphism with bounded inverse such that there exists a constant  $C$, independent of $\veps$,  with the property that  for any $\bff \in \big[ L^2(\bbR^d) \big]^d$, and any $\veps > 0$
\begin{equation}\label{eq:AprioriEst}
\| (m\mathbb{I} - \bbL^{\veps})^{-1}{\bf f}\|_{L^{2}} \leq C \|{\bf f}\|_{L^{2}}. 
\end{equation}
\end{lemma}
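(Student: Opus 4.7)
My plan is to separate the claim into a routine $\veps$-dependent boundedness of $\bbL^{\veps}$ and the more delicate $\veps$-uniform invertibility of $m\bbI - \bbL^{\veps}$. The latter I would obtain by introducing a weighted inner product under which $\bbL^{\veps}$ becomes self-adjoint and nonpositive, so that an energy estimate supplies both injectivity with the uniform bound and, via Lax--Milgram, surjectivity.

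For boundedness of $\bbL^{\veps}: [L^2(\bbR^d)]^d \to [L^2(\bbR^d)]^d$ at each fixed $\veps > 0$ I would use the decomposition already displayed in the preamble: $\bbG_{\veps}$ is multiplication by a matrix of operator norm at most $\alpha_2 \|\rho\|_{L^1}$, and $\bbK_{\veps}$ is built from convolution with the matrix kernel $\boldsymbol{\rmK}_{\veps} \in [L^1]^{d \times d}$ composed with multiplication by the bounded function $\mu$. Young's inequality bounds the convolution pieces of $\bbK_{\veps}$ on $[L^2]^d$, and the extra factors of $\lambda$ and $\veps^{-2}$ are bounded for each individual $\veps$, so $\bbL^{\veps} \in \cL([L^2]^d, [L^2]^d)$.

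For the $\veps$-uniform part I would introduce the weighted inner product
\[
(\bu, \bv)_{\veps} := \intdm{\bbR^d}{\frac{\bu(\bx)\cdot\bv(\bx)}{\lambda(\bx, \bx/\veps)}}{\bx}\,,
\]
which by \eqref{eq:A3} satisfies $\alpha_2^{-1}\|\bu\|_{L^2}^2 \leq \|\bu\|_{\veps}^2 \leq \alpha_1^{-1}\|\bu\|_{L^2}^2$ with constants independent of $\veps$. The even symmetry $\boldsymbol{\rmK}_{\veps}(-\bz) = \boldsymbol{\rmK}_{\veps}(\bz)$ together with the symmetry of $\mu_s$ in its two arguments gives $L^2$-self-adjointness of both $\bbK_{\veps}$ and $\bbG_{\veps}$, and the identity $\bbL^{\veps} = \veps^{-2}\mathrm{M}_{\lambda}(\bbK_{\veps} - \bbG_{\veps})$ upgrades this to self-adjointness of $\bbL^{\veps}$ in $(\cdot, \cdot)_{\veps}$, since the $\lambda$ factor is cancelled by the weight. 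A symmetrization in $\bx \leftrightarrow \by$ then yields the energy identity
\[
-(\bbL^{\veps}\bu, \bu)_{\veps} = \frac{1}{2\veps^2} \iintdm{\bbR^d}{\bbR^d}{\mu_s\!\left(\tfrac{\bx}{\veps}, \tfrac{\by}{\veps}\right)\rho_{\veps}(\bx-\by)\frac{\bigl|(\bx-\by)\cdot(\bu(\bx)-\bu(\by))\bigr|^2}{|\bx-\by|^2}}{\bx}{\by} \geq 0\,.
\]
Thus $B_m(\bu, \bv) := ((m\bbI - \bbL^{\veps})\bu, \bv)_{\veps}$ is a bounded, symmetric, coercive bilinear form on the weighted Hilbert space satisfying $B_m(\bu, \bu) \geq m\|\bu\|_{\veps}^2$. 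Lax--Milgram (or equivalently the spectral theorem applied to the bounded self-adjoint $\bbL^{\veps}$) produces a unique $\bu^{\veps} \in [L^2]^d$ with $(m\bbI - \bbL^{\veps})\bu^{\veps} = \bff$ and $\|\bu^{\veps}\|_{\veps} \leq m^{-1}\|\bff\|_{\veps}$; converting back via the norm equivalence gives \eqref{eq:AprioriEst} with $C = m^{-1}\sqrt{\alpha_2/\alpha_1}$, independent of $\veps$.

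The chief obstacle is that the operator norm of $\bbL^{\veps}$ blows up like $\veps^{-2}$ by virtue of the diffusive scaling, so no $\veps$-uniform resolvent bound can come from norm estimates on the operator itself; all $\veps$-uniformity must be extracted from the cancellation built into $\bbK_{\veps} - \bbG_{\veps}$. The vector-valued setting adds a subtlety relative to the scalar case of \cite{Piatnitski-Zhizhina}: the cancellation yields a nonnegative squared form only because the tensor factor $(\bx-\by)\otimes(\bx-\by)/|\bx-\by|^2$ appearing in $\boldsymbol{\rmK}_{\veps}$ is pointwise positive semi-definite, so the symmetrization produces a bona fide Dirichlet-type energy rather than a sign-indefinite double integral.
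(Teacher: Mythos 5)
Your proposal is correct and follows essentially the same route as the paper: the same weighted inner product with weight $1/\lambda(\bx,\bx/\veps)$, the same symmetrization in $\bx\leftrightarrow\by$ showing that $\bbL^{\veps}$ is self-adjoint and nonpositive in that space, and the same $\veps$-uniform coercivity estimate leading to the bound $C=m^{-1}\sqrt{\alpha_2/\alpha_1}$. The only difference is cosmetic: you conclude existence via Lax--Milgram (equivalently the spectral theorem), while the paper deduces surjectivity from the closedness of the range of $m\bbI-\bbL^{\veps}$, the triviality of its kernel, and self-adjointness; both are standard consequences of the identical energy estimate.
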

\begin{proof} The boundedness of $\bbL^{\veps}$ follows from a trivial application of Young's inequality as 
\begin{equation*}
\begin{split}
\Vnorm{\bbL^{\veps} \bv}_{L^2(\bbR^d)} &\leq \frac{\alpha_2}{\veps^2} \left(\Vnorm{ \bbK_{\veps} \bv }_{L^2(\bbR^d)} + \Vnorm{\boldsymbol{\rmG}_{\veps} \bv}_{L^2(\bbR^d)}\right) \\
	&\leq \left( \frac{\alpha_2}{\veps} \right)^2 \left(\Vnorm{\rho}_{L^1(\bbR^d)} \Vnorm{\bv}_{L^2(\bbR^d)} + \Vnorm{\rho}_{L^1(\bbR^d)} \Vnorm{\bv}_{L^2(\bbR^d)}\,\right).
\end{split}
\end{equation*}
and so
\begin{equation*}
\Vnorm{\bbL^{\veps}}_{\cL([ L^2(\bbR^d) ]^d \, , \, [L^2(\bbR^d) ]^d)} \leq \frac{2 \alpha_2^2 \Vnorm{\rho}_{L^1(\bbR^d)}}{\veps^2}\,.
\end{equation*}
To prove the invertibility of $ m \bbI - \bbL^{\veps}$ we introduce the weighted Lebesgue space $\big[ L^2_{\nu_{\veps}}(\bbR^d) \big]^d$ with weight $\displaystyle \nu_{\veps}(\bx) = \frac{1}{\lambda \left( \bx, \frac{\bx}{\veps} \right)}$. Then
\begin{equation*}
0 < \frac{1}{\alpha_2} \leq \nu_{\veps}(\bx) \leq \frac{1}{\alpha_1} < \infty\,,
\end{equation*}
which implies $\big[ L^2_{\nu_{\veps}}(\bbR^d) \big]^d = \big[ L^2(\bbR^d) \big]^d$. Thus $\bbL^{\veps} : \big[ L^2_{\nu_{\veps}}(\bbR^d) \big]^d \to \big[ L^2_{\nu_{\veps}}(\bbR^d) \big]^d$ is a bounded linear operator.
Further, $\bbL^{\veps}$ is self-adjoint in $\big[ L^2_{\nu_{\veps}}(\bbR^d) \big]^d$, and $\Vint{\bbL^{\veps}\bu,\bu}_{L^2_{\nu_{\veps}}(\bbR^d)} \leq 0$.
Indeed, splitting the double integral, interchanging the role of $\bx$ and $\by$ and using Fubini's theorem and the fact that $ {\mu_s} \left( \bx, \by \right) = {\mu_s} \left( \by, \bx \right)$,
\begin{equation*}
\begin{split}
\big\langle & \bbL^{\veps}\bu,\bv \big\rangle_{L^2_{\nu_{\veps}}(\bbR^d)} \\
 &= {1\over \epsilon^{2}}\iintdm{\bbR^d}{\bbR^d}{ {\mu_s} \left( \frac{\bx}{\veps}, \frac{\by}{\veps} \right) \Vint{ {\bfs \rmK}_{\epsilon} (\bx-\by) \big( \bu(\by) - \bu(\bx) \big), \bv(\bx)}}{\by}{\bx} \\
	&= \frac{1}{2\epsilon^{2}} \iintdm{\bbR^d}{\bbR^d}{\ldots}{\by}{\bx} + \frac{1}{2\epsilon^{2}} \iintdm{\bbR^d}{\bbR^d}{\ldots}{\by}{\bx} \\
	&=  \frac{1}{2\epsilon^{2}}  \iintdm{\bbR^d}{\bbR^d}{{\mu_s} \left( \frac{\bx}{\veps}, \frac{\by}{\veps} \right) \Vint{ {\bfs \rmK}_{\epsilon} (\bx-\by) \big( \bu(\by) - \bu(\bx) \big), \bv(\bx)}}{\by}{\bx} \\
	&\quad+ \frac{1}{2\epsilon^{2}}  \iintdm{\bbR^d}{\bbR^d}{{\mu_s} \left( \frac{\by}{\veps},  \frac{\bx}{\veps} \right) \Vint{  {\bfs \rmK}_{\epsilon} (\bx-\by)\big( \bu(\bx) - \bu(\by) \big), \bv(\by)}}{\bx}{\by} \\
	&= -  \frac{1}{2\epsilon^{2}}  \iintdm{\bbR^d}{\bbR^d}{{\mu_s} \left( \frac{\bx}{\veps},\frac{\by}{\veps} \right) \Vint{{\bfs \rmK}_{\epsilon} (\bx-\by) \big( \bu(\by) - \bu(\bx) \big), \bv(\by)-\bv(\bx)}}{\by}{\bx}\,.
\end{split}
\end{equation*}
Setting $\bv = \bu$ in the last line gives $\Vint{\bbL^{\veps}\bu,\bu}_{L^2_{\nu_{\veps}}(\bbR^d)} \leq 0$. Repeating these steps in reverse order with the roles of $\bu$ and $\bv$ interchanged, one sees that $\Vint{\bbL^{\veps}\bu,\bv}_{L^2_{\nu_{\veps}}(\bbR^d)} = \Vint{\bu,\bbL^{\veps} \bv}_{L^2_{\nu_{\veps}}(\bbR^d)}$.
Thus for $m>0$ we have $\Vint{(m-\bbL^{\veps}) \bu,\bu}_{L^2_{\nu_{\veps}}(\bbR^d)} \geq m \Vnorm{\bu}_{L^2_{\nu_{\veps}}(\bbR^d)}^2$. This implies by the Cauchy-Schwarz inequality that 
\begin{equation*}
m \Vnorm{\bu}_{L^2_{\nu_{\veps}}(\bbR^d)}^2  \leq \Vint{(m-\bbL^{\veps}) \bu,\bu}_{L^2_{\nu_{\veps}}(\bbR^d)} \leq \Vnorm{(m-\bbL^{\veps}) \bu}_{L^2_{\nu_{\veps}}(\bbR^d)} \Vnorm{\bu}_{L^2_{\nu_{\veps}}(\bbR^d)}\,,
\end{equation*}
and therefore we have 
\begin{equation}\label{eq:L2LowerBoundOnLeps}
m \Vnorm{\bu}_{L^2_{\nu_{\veps}}(\bbR^d)} \leq \Vnorm{(m-\bbL^{\veps}) \bu}_{L^2_{\nu_{\veps}}(\bbR^d)}\,. 
\end{equation}
As a consequence, Range$(m \bbI - \bbL^{\veps})$ is closed and  kernel$(m \bbI - \bbL^{\veps}) = \{ {\bf 0 } \}$. It then follows that Range$(m \bbI - \bbL^{\veps}) = $kernel$(m \bbI - \bbL^{\veps})^{\perp} = \big[ L^2_{\nu_{\veps}}(\bbR^d) \big]^d$. 
Thus $(m \bbI - L^{\veps})^{-1} : \big[ L^2_{\nu_{\veps}}(\bbR^d) \big]^d \to \big[ L^2_{\nu_{\veps}}(\bbR^d) \big]^d$  is a bounded linear operator.
Setting $\bu = (m\bbI - \bbL^{\veps})^{-1}\bv$ for $\bv \in \big[ L^2_{\nu_{\veps}}(\bbR^d) \big]^d$ in \eqref{eq:L2LowerBoundOnLeps} gives a bound of $\frac{1}{m}$ for the operator norm of 
$(m\bbI - \bbL^{\veps})^{-1}$ over $\big[ L^2_{\nu_{\veps}}(\bbR^d) \big]^d$. Notice  that the bound is uniform in $\veps$. Finally, since $\big[ L^2_{\nu_{\veps}}(\bbR^d) \big]^d = \big[ L^2(\bbR^d) \big]^d$, with norms comparable with constants independent of $\veps$ the operator $(m\bbI - \bbL^{\veps})^{-1} : \big[ L^2(\bbR^d) \big]^d \to \big[ L^2(\bbR^d) \big]^d$ is well-defined, linear and bounded, with operator norm bounded independent of $\veps$.
\end{proof}

\subsection{Some operators on function spaces of periodic functions}
As we will see in the next section, the operators $\bbK$ and $\bbG$ will be applied not only to functions in $\big[ L^2(\bbR^d)\big]^d$ but also to functions in $\big[ L^2(\bbT^d)\big]^d $. We now summarize basic properties of these operators as linear maps on $\big[ L^2(\bbT^d)\big]^d$. We begin with the following proposition. 

\begin{proposition}\label{PD-G} Assume that $\rho, {\mu}$ and $\lambda$ satisfy \eqref{eq:A1}, \eqref{eq:A2} and \eqref{eq:A3}. Then 
$\bbG : \big[ L^2(\bbT^d) \big]^d \to \big[ L^2(\bbT^d) \big]^d$ is a bounded invertible linear operator.
\end{proposition}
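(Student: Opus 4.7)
The operator $\bbG$ is simply pointwise multiplication by the matrix-valued function $\boldsymbol{\rmG}(\bx)$, so both boundedness and invertibility on $[L^2(\bbT^d)]^d$ reduce to verifying that $\boldsymbol{\rmG}(\bx)$ is well-defined, periodic, and that $\boldsymbol{\rmG}(\bx)$ and $\boldsymbol{\rmG}(\bx)^{-1}$ are uniformly bounded matrices. The plan is therefore to (i) rewrite $\boldsymbol{\rmG}$ after a change of variables so that periodicity is transparent, (ii) recall the already-noted pointwise bound $|\boldsymbol{\rmG}(\bx)|\leq \alpha_2\Vnorm{\rho}_{L^1(\bbR^d)}$ for boundedness, and (iii) establish a uniform pointwise coercivity estimate $\langle \boldsymbol{\rmG}(\bx)\bfs\xi,\bfs\xi\rangle \geq c_0 |\bfs\xi|^2$ for every $\bx\in\bbT^d$ and every $\bfs\xi\in\bbR^d$. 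Once (iii) is in place, the symmetry of $\boldsymbol{\rmK}(\bz)$ forces $\boldsymbol{\rmG}(\bx)$ to be a symmetric positive-definite matrix uniformly in $\bx$, so $\boldsymbol{\rmG}(\bx)^{-1}$ exists and is uniformly bounded, whence multiplication by $\boldsymbol{\rmG}^{-1}$ is the bounded two-sided inverse of $\bbG$ on $[L^2(\bbT^d)]^d$.

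For (i), the substitution $\bz=\bx-\by$ gives
\begin{equation*}
\boldsymbol{\rmG}(\bx)=\int_{\bbR^d}\rho(\bz)\,\frac{\bz\otimes\bz}{|\bz|^2}\,\tfrac{1}{2}\bigl(\mu(\bx)+\mu(\bx-\bz)\bigr)\,\rmd\bz,
\end{equation*}
which is manifestly $\bbT^d$-periodic in $\bx$ by periodicity of $\mu$, and symmetric since $\bz\otimes\bz$ is symmetric. Step (ii) is immediate from \eqref{eq:A3} together with $\rho\in L^1(\bbR^d)$.

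The main obstacle is step (iii), the uniform positivity. For any unit $\bfs\xi$, using $\mu_s\geq \alpha_1$,
\begin{equation*}
\langle \boldsymbol{\rmG}(\bx)\bfs\xi,\bfs\xi\rangle \;\geq\; \alpha_1 \int_{\bbR^d}\rho(\bz)\,\Bigl|\tfrac{\bz}{|\bz|}\cdot\bfs\xi\Bigr|^2\rmd\bz \;=:\; \alpha_1\,\Phi(\bfs\xi).
\end{equation*}
I would show $\Phi(\bfs\xi)>0$ for every $\bfs\xi\in\bbS^{d-1}$ and then conclude by compactness of $\bbS^{d-1}$ and continuity of $\Phi$ (which follows from dominated convergence using $\rho\in L^1$). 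To see $\Phi(\bfs\xi)>0$, use \eqref{eq:A2}: the set $\{\bz/|\bz|\cdot\bfs\xi=0\}$ meets $\cJ\cup(-\cJ)$ in a set of $\cH^{d-1}$-measure zero, so $|\bz/|\bz|\cdot\bfs\xi|^2>0$ on a subset of $\Lambda\cap B_{\delta_0}(\mathbf{0})$ of full Lebesgue measure. Combined with the fact that any open subset of $\Lambda\cap B_{\delta_0}(\mathbf{0})$ meets $\{\rho>0\}$ in a set of positive measure (by definition of $\supp\rho$), the product $\rho(\bz)|\bz/|\bz|\cdot\bfs\xi|^2$ is strictly positive on a set of positive measure, giving $\Phi(\bfs\xi)>0$. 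The delicate part here is translating the topological condition $\Lambda\cap B_{\delta_0}(\mathbf{0})\subset\supp\rho$ into a measure-theoretic lower bound on the integral, and this is where \eqref{eq:A2} is used in an essential way.
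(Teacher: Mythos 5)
Your proposal is correct and takes essentially the same route as the paper: boundedness from the pointwise bound $|\boldsymbol{\rmG}(\bx)|\leq \alpha_2\Vnorm{\rho}_{L^1(\bbR^d)}$, and invertibility from the uniform positive-definiteness $\Vint{\boldsymbol{\rmG}(\bx)\bfs\xi,\bfs\xi}\geq \alpha_1\int_{\bbR^d}\rho(\bz)\left|\frac{\bz}{|\bz|}\cdot\bfs\xi\right|^2 \rmd\bz>0$ combined with continuity of this quadratic form and compactness of $\bbS^{d-1}$. The only (harmless) difference is that where you invoke \eqref{eq:A2} to get positivity of the angular integral, the paper's proof uses only that $\supp\rho$ has positive measure (i.e.\ $a_1>0$), since the zero set of $\bz\mapsto \bz\cdot\bfs\xi$ is Lebesgue-null.
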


\begin{proof}
Since the matrix $|\boldsymbol{\rmG}(\bq)| \leq \alpha_2 a_1$ uniformly in $\bq$, $\bbG : \big[ L^2(\bbT^d) \big]^d \to \big[ L^2(\bbT^d) \big]^d$ is a bounded linear operator. To see that $\bbG$ is invertible, it suffices to show that the symmetric matrix $\bG$ is positive definite, i.e.\ there exists a constant $\gamma > 0$ such that
\begin{equation}\label{eq:GisSPD}
\Vint{\boldsymbol{\rmG}(\bq) {\bfs \eta}, {\bfs \eta}} \geq \gamma \quad \text{ for all } {\bfs \eta} \in \bbS^{d-1}\,, \text{ uniformly in } \bq\,.
\end{equation}
To prove this, first note that 
\begin{equation}\label{eq:GisSPDProof}
\Vint{\boldsymbol{\rmG}(\bq) \bfeta, \bfeta} = \intdm{\bbR^d}{\rho(\bq-\by) {\mu_s}(\by,\bq) \left( \frac{\bq-\by}{|\bq-\by|} \cdot {\bfs \eta} \right)^2}{\by} \geq \alpha_1 \intdm{\bbR^d}{\rho(\bz) \left( \frac{\bz}{|\bz|} \cdot {\bfs \eta} \right)^2}{\bz}\,.
\end{equation}
Since $\text{meas}(\supp \rho) > 0$, the quantity $\left( \frac{\bz}{|\bz|} \cdot {\bfs \eta} \right)$ cannot be identically zero on $\supp \rho$. Therefore for each ${\bfs \eta} \in \bbS^{d-1}$ we have $\Vint{\boldsymbol{\rmG}(\bq) {\bfs \eta}, {\bfs \eta}} > 0$. Since the mapping ${\bfs \eta} \mapsto \Vint{\boldsymbol{\rmG}(\bq) {\bfs \eta}, {\bfs \eta}}$ is continuous on a compact set, it follows that $\Vint{\boldsymbol{\rmG}(\bq) {\bfs \eta}, {\bfs \eta}} \geq \gamma$ for all ${\bfs \eta} \in \bbS^{d-1}$ with lower bound independent of $\bq$ by \eqref{eq:GisSPDProof}.
\end{proof}

\begin{proposition}\label{cpt-K} Assume that $\rho, {\mu}$ and $\lambda$ satisfy \eqref{eq:A1}, \eqref{eq:A2} and \eqref{eq:A3}. The operator 
$\bbK : \big[ L^2(\bbT^d) \big]^d \to \big[ L^2(\bbT^d) \big]^d$ is a well-defined  bounded, linear, and compact operator. Moreover, we have 
\begin{equation*}
\Vnorm{\bbK {\bfs \psi}}_{L^2(\bbT^d)} \leq \alpha_2 \|\rho\|_{L^{1}}  \Vnorm{{\bfs \psi}}_{L^2(\bbT^d)},\quad \forall {\bfs \psi}\in \big[ L^2(\bbT^d) \big]^d\,.
\end{equation*}
\end{proposition}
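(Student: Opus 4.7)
The plan is to split the proof into three pieces: linearity (trivial from the definition), boundedness in $[L^2(\bbT^d)]^d$, and compactness. The output of $\bbK$ applied to a periodic function is again periodic because both $\mu_s$ and the periodic extension of $\bv$ are periodic in $\by$ and the convolution with the $L^1$ kernel $\boldsymbol{\rmK}$ preserves periodicity, so the operator is well-defined as a map into $[L^2(\bbT^d)]^d$ once the norm estimate is in hand.

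For boundedness and the explicit constant $\alpha_2 \|\rho\|_{L^1}$, I would use the decomposition already recorded before the statement,
\begin{equation*}
\bbK \bv = \tfrac12\bigl(\boldsymbol{\rmK} \ast (\rmM_\mu \bv)\bigr) + \tfrac12 \rmM_\mu(\boldsymbol{\rmK} \ast \bv)\,.
\end{equation*}
The pointwise operator-norm bound $|\boldsymbol{\rmK}(\bx)| \le \rho(\bx)$ (since $\bx\otimes\bx/|\bx|^2$ is a rank-one orthogonal projection and hence has spectral norm one) gives $\boldsymbol{\rmK} \in [L^1(\bbR^d)]^{d\times d}$ with $\|\boldsymbol{\rmK}\|_{L^1} \le \|\rho\|_{L^1}$. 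Then the standard Young-type estimate for convolution of an $L^1(\bbR^d)$ function against a $\bbT^d$-periodic $L^2$ function yields $\|\boldsymbol{\rmK} \ast \bw\|_{L^2(\bbT^d)} \le \|\rho\|_{L^1}\|\bw\|_{L^2(\bbT^d)}$, and combining with the uniform bound $|\mu| \le \alpha_2$ from \eqref{eq:A3} delivers the asserted inequality.

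For compactness I plan to use an approximation argument. Choose $\rho_n \in L^\infty_c(\bbR^d)$ supported away from the origin with $\rho_n \to \rho$ in $L^1(\bbR^d)$ (standard truncation plus mollification), and let $\bbK^{(n)}$ denote the operator obtained by replacing $\rho$ with $\rho_n$ in the definition of $\boldsymbol{\rmK}$. Since periodic functions on $\bbT^d$ can be used to rewrite the integral over $\bbR^d$ as an integral over a single fundamental cell plus a lattice sum, one has
\begin{equation*}
\bbK^{(n)}\bv(\bx) = \int_{\bbT^d} \Phi_n(\bx,\by) \bv(\by)\, \rmd\by\,, \qquad \Phi_n(\bx,\by) := \mu_s(\bx,\by)\sum_{\bk \in \bbZ^d} \boldsymbol{\rmK}_n(\bx-\by-\bk)\,.
\end{equation*}
Because $\boldsymbol{\rmK}_n$ is bounded and compactly supported, only finitely many terms in the lattice sum are nonzero for each argument and $\Phi_n \in [L^\infty(\bbT^d \times \bbT^d)]^{d\times d} \subset [L^2(\bbT^d \times \bbT^d)]^{d\times d}$; hence $\bbK^{(n)}$ is Hilbert-Schmidt on $[L^2(\bbT^d)]^d$, hence compact. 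Applying the boundedness estimate just established to the operator $\bbK - \bbK^{(n)}$, whose kernel is built from $\rho-\rho_n$, gives
\begin{equation*}
\Vnorm{\bbK - \bbK^{(n)}}_{\cL([L^2(\bbT^d)]^d,[L^2(\bbT^d)]^d)} \le \alpha_2 \|\rho-\rho_n\|_{L^1(\bbR^d)} \to 0\,.
\end{equation*}
Since the compact operators form a norm-closed subspace of $\cL([L^2(\bbT^d)]^d,[L^2(\bbT^d)]^d)$, the operator $\bbK$ is compact.

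The one genuinely nonroutine ingredient is the compactness step; everything else is bookkeeping. The obstacle I would watch out for is making sure the switch from the $\bbR^d$ integral to a $\bbT^d$ integral is legitimate (which requires only periodicity of $\mu_s$ in the second argument together with periodicity of $\bv$, both of which are available), and that the periodized kernel $\Phi_n$ is genuinely bounded, not merely locally integrable, for the Hilbert-Schmidt step to apply. An alternative, equally clean route to compactness is a Fourier-multiplier argument: viewing $\bv \mapsto \boldsymbol{\rmK} \ast \bv$ on $[L^2(\bbT^d)]^d$ as a Fourier multiplier with symbols $\widehat{\boldsymbol{\rmK}}(2\pi \bk)$, Riemann-Lebesgue forces these to vanish at infinity, so finite-rank truncations converge to the convolution in operator norm, and composition with the bounded multiplication $\rmM_\mu$ preserves compactness.
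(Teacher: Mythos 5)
Your proposal is correct, and it reaches compactness by a genuinely different mechanism than the paper, although the overall skeleton (truncate the kernel, show the truncated operator is compact, then pass to the limit in operator norm using the boundedness estimate and the norm-closedness of the compact operators) is the same. The paper truncates only the support of $\rho$, keeps the resulting kernel merely in $L^1$, unfolds the lattice sum into finitely many convolution-plus-multiplication operators $S_{N,k}$, $T_{N,k}$ acting through an extension to $\bbR^d$, and invokes the Fr\'echet--Kolmogorov theorem to get compactness of each piece; you instead truncate $\rho$ in both support and amplitude so that the periodized kernel $\Phi_n(\bx,\by)=\mu_s(\bx,\by)\sum_{\bk}\boldsymbol{\rmK}_n(\bx-\by-\bk)$ is bounded on $\bbT^d\times\bbT^d$, and then quote the Hilbert--Schmidt criterion. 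Your extra amplitude truncation is exactly the price of the Hilbert--Schmidt route (an $L^1$ kernel periodizes to something locally integrable but not necessarily square-integrable), and it is harmless since $L^\infty_c$ is dense in $L^1$ and the operator-norm bound for the error only sees $\|\rho-\rho_n\|_{L^1}$ (note the boundedness estimate must be run with $|\rho-\rho_n|$ in place of $\rho$, which goes through verbatim). What your approach buys is the avoidance of the paper's somewhat delicate bookkeeping with the extension operator $\cI$, the cutoff of the lattice sum to $\bk\in[-N-2,N+2]^d$, and the restriction back to $\bbT^d$ needed to apply Fr\'echet--Kolmogorov; what the paper's approach buys is that it never needs $\rho$ to be bounded, working directly with the support truncation. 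Your alternative Fourier-multiplier remark (symbols $\widehat{\boldsymbol{\rmK}}(2\pi\bk)$ vanish at infinity by Riemann--Lebesgue, so the periodic convolution is a norm-limit of finite-rank operators, and composing with the bounded multiplications $\rmM_\mu$ on either side in the symmetrized decomposition preserves compactness) is also valid and arguably the cleanest of the three. The boundedness estimate with constant $\alpha_2\|\rho\|_{L^1}$ is obtained essentially as in the paper, via the pointwise bound $|\boldsymbol{\rmK}(\bz)|\le\rho(\bz)$ and the hybrid Young/Cauchy--Schwarz--Tonelli argument for convolution of an $L^1(\bbR^d)$ kernel against a periodic $L^2(\bbT^d)$ function.
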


\begin{proof}
Given ${\bfs \psi}  \in \big[ L^1(\bbT^d) \big]^d$, we first show that $(\bq, \bz)\mapsto{\bfs \rmK}(\bz) {\mu_s}(\bq, \bq-\bz) {\bfs \psi}(\bq-\bz)\in \big[L^{1}(\bbT^d \times \bbR^{d})\big]^{d}$.  This follows from Tonelli's theorem and Young's inequality since by  periodicity we have 
\begin{equation*}
\begin{split}
\iint_{\bbT^d \times \bbR^{d}}|{\rmK}(\bz) {\mu_s}(\bq, \bq-\bz) {\bfs \psi}(\bq-\bz)| \, \rmd\by \, \rmd\bq 
	&\leq \alpha_2 \iint_{\bbT^d \times \bbR^{d}}\rho(\bz)|{\bfs \psi}(\bq-\bz)| \, \rmd\bq \, \rmd\bz \\
	&=\alpha_2 \int_{\bbR^d} \rho(\bz)\left(\int_{ \bbT^{d}}|{\bfs \psi}(\bq-\bz)| \, \rmd\bq \right) \, \rmd \bz \\
	&= \alpha_2 \|{\bfs \psi}\|_{L^{1}(\bbT^{d})} \|\rho\|_{L^1(\bbR^{d})} < \infty\,.
\end{split}
\end{equation*}\
As a consequence, for almost all $\bq\in \bbT^{d}$, $\boldsymbol{\rmK}(\cdot) {\mu_s}(\bq, \bq-\cdot) {\bfs \psi}(\bq-\cdot)\in \big[L^{1}(\bbR^{d})\big]^{d}$. That is, $\bbK {\bfs \psi}(\bq)$ is well-defined for almost all $\bq \in \bbT^{d}$.  Linearity of the operator is obvious. 
A similar argument also shows that for $\bq\in \bbT^{d}$ and ${\bfs \psi} \in \big[ L^2(\bbT^d) \big]^d$ the function $\rho(\bq-\cdot) |{\bfs \psi}|^{2} \in L^{1}(\mathbb{R}^{d})$. 
Now for any ${\bfs \psi} \in \big[ L^2(\bbT^d) \big]^d$, we have after a change of variables, H\"older's inequality, and Tonelli's theorem
\begin{equation*}
\begin{split}
\Vnorm{\bbK {\bfs \psi}}_{L^2(\bbT^d)}^2 &= \intdm{\bbT^d}{ \left| \intdm{\bbR^d}{\boldsymbol{\rmK}(\bq-\by) {\mu_s}(\bq, \by) {\bfs \psi}(\by)}{\by}\right|^{2}} {\bq} \\
&=  \intdm{\bbT^d}{ \left| \intdm{\bbR^d}{\boldsymbol{\rmK}(\bz) {\mu_s}(\bq, \bq-\bz) {\bfs \psi}(\bq-\bz)}{\bz}\right|^{2}} {\bq}\\
&\leq \alpha_{2}^{2}\intdm{\bbT^d}{ \left| \intdm{\bbR^d}{\rho(\bz)  {\bfs \psi}(\bq-\bz)}{\bz}\right|^{2}}{\bq}\\
& \leq \alpha^2_{2}\|\rho\|_{L^{1}(\bbR^d)} \intdm{\bbT^d}{ \intdm{\bbR^d}{\rho(\bz)  |{\bfs \psi}(\bq-\bz)|^2}{\bz}}{\bq}\\
& = \alpha^2_{2}\|\rho\|_{L^{1}(\bbR^d)} \intdm{\bbR^d}{ \rho(\bz)\intdm{\bbT^d}{  |{\bfs \psi}(\bq-\bz)|^2}{\bq}}{\bz}
	= \alpha^2_{2}\|\rho\|_{L^{1}(\bbR^d)}^{2} \|{\bfs \psi}\|^{2}_{L^{2}(\bbT^{d})}\,.\\
\end{split}
\end{equation*}

To show compactness of the operator, we show that $\bbK$ is a uniform limit of bounded, compact linear operators. To that end, let $N \in \bbN$ and define
\begin{equation*}
\bbK_N {\bfs \psi}(\bq) := \intdm{\bbR^d}{\boldsymbol{\rmK}_N(\bq-\by) {\mu_s}(\bq, \by) {\bfs \psi}(\by)}{\by}\, \quad \text{for $\bq\in \bbT^{d}$} ,
\end{equation*}
where $\boldsymbol{\rmK}_N(\bz) := \rho(\bz) \chi_{[-N,N]^d}(\bz) \frac{\bz \otimes \bz}{|\bz|^2}\,$ is a matrix with bounded support.  
We claim that for each $N$, the operator $\bbK_N : \big[L^2(\bbT^d) \big]^d \to \big[ L^2(\bbT^d) \big]^d$ is compact.
Assuming this and applying the previous estimate for the operator $\bbK - \bbK_N$, 
\begin{equation*}
\Vnorm{\bbK - \bbK_N}_{\cL( [L^2(\bbT^d)]^d \, , \, [L^2(\bbT^d)]^d )} \leq \alpha_2 \Vnorm{\rho- \rho \chi_{[-N,N]^d}(\cdot)}_{L^1(\bbR^d)} \to 0 \quad \text{ as } N \to \infty\,,
\end{equation*}
and thus $\bbK$ is compact as it is the limit (in the operator norm) of compact operators $\bbK_N$.

To see that $\bbK_N$ is compact, we use the periodicity of ${\mu_s}$ and ${\bfs \psi}$ to write 
\begin{equation*}
\begin{split}
\bbK_N {\bfs \psi}(\bq) &= \intdm{\bbR^d}{\boldsymbol{\rmK}_N(\bq-\by){\mu_s}(\bq, \by) {\bfs \psi}(\by)}{\by} \\
	&= \sum_{\bk \in \bbZ^d} \intdm{\bbT^d}{\boldsymbol{\rmK}_N(\bq-\by+\bk){\mu_s}(\bq, \by) {\bfs \psi}(\by)}{\by}\,. \\
%	&= \sum_{\bk \in \bbZ^d} \intdm{\bbT^d}{\rho(\bq-\by+\bk) \chi_{[-N,N]^d}(\bq-\by+\bk) {\mu_s}(\bq, \by) \frac{(\bq-\by+\bk) \otimes (\bq-\by+\bk)}{|\bq-\by+\bk|^2} {\bfs \psi}(\by)}{\by}\,. \\
\end{split}
\end{equation*}
Writing ${\mu_s}(\bq, \by) = {\mu(\bq)\over 2}  + {\mu(\by)\over 2}$ and using that $\supp \bK_N \subseteq [-N,N]^d$,
%$\by$, $\bq \in \bbT^d$ and $\bk \in \bbZ^d$ we have that 
%\begin{equation*}
%\begin{split}
%\bbK_N {\bfs \psi}(\bq) &= \sum_{\substack{\bk \in \bbZ^d \\ \bk \in [-N-2,N+2]^d}} \intdm{\bbT^d}{\rho(\bq-\by+\bk) \chi_{[-N,N]^d}(\bq-\by+\bk) {\mu_s}(\bq, \by) \frac{(\bq-\by+\bk) \otimes (\bq-\by+\bk)}{|\bq-\by+\bk|^2} {\bfs \psi}(\by)}{\by}\,\\
%&={\mu(\bq)\over 2}\sum_{\substack{\bk \in \bbZ^d \\ \bk \in [-N-2,N+2]^d}} \intdm{\bbT^d}{\rho(\bq-\by+\bk) \chi_{[-N,N]^d}(\bq-\by+\bk) \frac{(\bq-\by+\bk) \otimes (\bq-\by+\bk)}{|\bq-\by+\bk|^2} {\bfs \psi}(\by)}{\by}\\
%&+\sum_{\substack{\bk \in \bbZ^d \\ \bk \in [-N-2,N+2]^d}} \intdm{\bbT^d}{\rho(\bq-\by+\bk) \chi_{[-N,N]^d}(\bq-\by+\bk) {{\mu}(\by)\over 2} \frac{(\bq-\by+\bk) \otimes (\bq-\by+\bk)}{|\bq-\by+\bk|^2} {\bfs \psi}(\by)}{\by}
%\end{split}
%\end{equation*}
\begin{equation*}
\begin{split}
\bbK_N {\bfs \psi}(\bq) &= \sum_{\substack{\bk \in \bbZ^d \\ \bk \in [-N-2,N+2]^d}} \intdm{\bbT^d}{\bK_N(\bq-\by+\bk) \mu_s(\bq,\by) {\bfs \psi}(\by)}{\by} \\
	&={\mu(\bq)\over 2}\sum_{\substack{\bk \in \bbZ^d \\ \bk \in [-N-2,N+2]^d}} \intdm{\bbT^d}{\bK_N(\bq-\by+\bk) {\bfs \psi}(\by)}{\by} \\
	&\qquad +\sum_{\substack{\bk \in \bbZ^d \\ \bk \in [-N-2,N+2]^d}} \intdm{\bbT^d}{ \bK_N(\bq-\by+\bk) {{\mu}(\by)\over 2} {\bfs \psi}(\by)}{\by}\,.
\end{split}
\end{equation*}

We will show that $\bbK_N {\bfs \psi}$ consists of --  in the appropriate sense -- a finite linear combination of convolution and multiplication operators. This fact must be shown carefully, as ${\bfs \psi}$ is periodic and thus does not necessarily belong to a Lebesgue space defined over all of $\bbR^d$. With this goal in mind, define $\boldsymbol{\rmK}_{N,k}$ as $\boldsymbol{\rmK}_{N,k}(\bz) := \boldsymbol{\rmK}_N(\bz+ \bk)$, and define 
\begin{equation*}
\widetilde{{\mu}}(\by) :=
	\begin{cases}
	{\mu}(\by) & \by \in \bbT^d\,, \\
	0 & \by \in \bbR^{d} \setminus \bbT^d\,,
	\end{cases}
\qquad
\widetilde{{\bfs \psi}}(\bq) :=
	\begin{cases}
	{\bfs \psi}(\bq) & \bq \in \bbT^d\,, \\
	0 & \bq \in \bbR^d \setminus \bbT^d\,.
	\end{cases}
\end{equation*}
Then we can write $\bbK_N$ as
\begin{equation*}
\bbK_N {\bfs \psi}(\bq) =  \sum_{\substack{\bk \in \bbZ^d \\ \bk \in [-N-2,N+2]^d}} \frac{1}{2} S_{N,k} {\bfs \psi} (\bq)+ \sum_{\substack{\bk \in \bbZ^d \\ \bk \in [-N-2,N+2]^d}} {1\over 2}T_{N,k} {\bfs \psi} (\bq)\,,
\end{equation*}
where the operators $S_{N,k}$, $T_{N,k} : \big[ L^2(\bbT^d) \big]^d \to \big[ L^2(\bbR^d) \big]^d$ are defined as
$T_{N,k}({\bfs \psi})(\bq) := \boldsymbol{\rmK}_{N,k} \ast (\rmM_{\widetilde{{\mu}}} \widetilde{{\bfs \psi}})(\bq)$ 
and $S_{N,k}({\bfs \psi})(\bq) := \rmM_{\widetilde{\mu}} \big( \boldsymbol{\rmK}_{N,k} \ast \widetilde{{\bfs \psi}} \big)(\bq)$.

\noindent Now, let $\cI : \big[ L^2(\bbT^d) \big]^d \to \big[ L^2(\bbR^d) \big]^d$ be the operator that maps ${\bfs \psi}$ to $\widetilde{{\bfs \psi}}$, and suppose $\cG \subset \big[ L^2(\bbT^d) \big]^d$ is a uniformly bounded set. Then the set
\begin{equation*}
\overline{\cG} := (\rmM_{\widetilde{{\mu}}} \cI) (\cG) = \left\lbrace \overline{{\bfs \psi}} \, : \, \overline{{\bfs \psi}} = \rmM_{\widetilde{{\mu}}} \widetilde{{\bfs \psi}}\,, \quad {\bfs \psi} \in \cG \right\rbrace
\end{equation*} 
is clearly a uniformly bounded family in $\big[ L^2(\bbR^d) \big]^d$, since $\Vnorm{\overline{{\bfs \psi}}}_{L^2(\bbR^d)} \leq \alpha_2 \Vnorm{{\bfs \psi}}_{L^2(\bbT^d)}$.
By the Frechet-Kolmogorov Theorem  \cite[Corollary 4.28]{Brezis-book}, the set $(\boldsymbol{\rmK}_{N,k} \ast \overline{\cG}) \big|_{\bbT^d}$ is precompact in $\big[ L^2(\bbT^d) \big]^d$. Therefore, for each $N$ and $\bk  \in [-N-2,N+2]^d \cap \bbZ^d$ the operator $T_{N,k} : \big[ L^2(\bbT^d) \big]^d \to \big[ L^2(\bbT^d) \big]^d$ is compact. In a similar fashion we can prove $S_{N,k} : \big[ L^2(\bbT^d) \big]^d \to \big[ L^2(\bbT^d) \big]^d$ is compact. Thus, $\bbK_N$ is a compact operator, being the linear combination of finitely many compact operators $T_{N,k}$ and $S_{N_k}$.
\end{proof}

Finally, we state some basic results related to periodic functions that we will use in the paper. 
We begin with the following lemma which is proved in \cite[Proposition 7]{Piatnitski-Zhizhina}. 
\begin{lemma}\label{lem:even-odd}
Suppose $g$ and $h$ are bounded periodic functions defined on all of $\bbR^d$. 
Suppose also that two functions $a$ and $b$ belong to $L^1(\bbR^d)$ ans suppose that $a$ is even and $b$ is odd. Then
\begin{equation}\label{eq:aEvenFxn}
\iintdm{\bbR^d}{\bbT^d}{a(\bx-\by)g(\by) h(\bx)}{\bx}{\by} = \iintdm{\bbR^d}{\bbT^d}{a(\bx-\by) g(\bx) h(\by)}{\bx}{\by}\,,
\end{equation}
and
\begin{equation}\label{eq:bOddFxn}
\iintdm{\bbR^d}{\bbT^d}{b(\bx-\by) g(\by)h(\bx)}{\bx}{\by} = - \iintdm{\bbR^d}{\bbT^d}{b(\bx-\by)g(\bx) h(\by)}{\bx}{\by}\,.
\end{equation}
\end{lemma}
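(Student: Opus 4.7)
The plan is to prove both identities by a single change-of-variables argument, with the evenness/oddness of $a,b$ entering only at the very last step. The starting observation is that the double integrals are absolutely convergent: since $g,h$ are bounded and $a,b \in L^1(\bbR^d)$, Tonelli gives
\[
\iintdm{\bbR^d}{\bbT^d}{|a(\bx-\by)||g(\by)||h(\bx)|}{\bx}{\by} \leq \|g\|_{\infty}\|h\|_{\infty}\|a\|_{L^1(\bbR^d)}|\bbT^d| < \infty,
\]
so Fubini may be applied freely throughout. I would begin from the left-hand side of \eqref{eq:aEvenFxn}, swap the order of integration, and perform the affine substitution $\bz := \bx - \by$ in the $\by$-integral (for each fixed $\bx \in \bbT^d$, $\by \mapsto \bz$ is a measure-preserving bijection of $\bbR^d$). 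This produces
\[
\intdm{\bbR^d}{a(\bz)\intdm{\bbT^d}{g(\bx-\bz)h(\bx)}{\bx}}{\bz}.
\]

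The next step is the key use of periodicity. Because the integrand $\bx \mapsto g(\bx-\bz)h(\bx)$ is $\bbZ^d$-periodic, I may translate the $\bbT^d$-integral by $\bz$, i.e.\ substitute $\bx \to \bx + \bz$, to obtain
\[
\intdm{\bbR^d}{a(\bz)\intdm{\bbT^d}{g(\bx)h(\bx+\bz)}{\bx}}{\bz}.
\]
Then I undo the outer change of variables by setting $\by := \bx + \bz$, so $\bz = \by - \bx$ and (by Fubini once more) the expression becomes
\[
\iintdm{\bbR^d}{\bbT^d}{a(\by-\bx)g(\bx)h(\by)}{\bx}{\by}.
\]

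The final step invokes the symmetry hypothesis on the kernel. For \eqref{eq:aEvenFxn}, evenness gives $a(\by-\bx) = a(\bx-\by)$ directly, yielding the right-hand side. For \eqref{eq:bOddFxn}, the identical chain of manipulations applies with $b$ in place of $a$, and oddness $b(\by-\bx) = -b(\bx-\by)$ produces the sign change. I do not anticipate a genuine obstacle; the only subtlety is bookkeeping the periodic shift in step two and verifying that Fubini is legitimate at each swap, but the $L^\infty \times L^\infty \times L^1$ bound above handles that uniformly. Nothing about the argument depends on the sign structure of $a$ or $b$ until the very last line, which is what makes the parallel treatment of \eqref{eq:aEvenFxn} and \eqref{eq:bOddFxn} clean.
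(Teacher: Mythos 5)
Your argument is correct: the absolute-convergence bound justifies every application of Fubini, the substitution $\bz=\bx-\by$, the periodic shift $\bx\mapsto\bx+\bz$ of the $\bbT^d$-integral, and the final substitution $\by=\bx+\bz$ are all legitimate, and the evenness/oddness of the kernel enters exactly where it should. The paper itself offers no proof — it simply cites Proposition~7 of the Piatnitski--Zhizhina paper — and your change-of-variables-plus-periodicity argument is the standard one that such a citation stands in for, so it serves as a correct, self-contained justification of the lemma.
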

Another estimate that we will need in the paper is the following. 
\begin{lemma}\label{lem:darboux}
Suppose that $h\in L^{2}(\bbT^{d})$ and $\psi\in C(\bbR^{d})\cap L^{2}(\bbR^{d})$. Then we have 
\begin{equation*}
\limsup_{\veps\to 0}\sup_{\bq \in \bbR^d} \Vnorm{\left|h \left( \frac{\cdot}{\veps} \right) \right| \, \left| \psi (\cdot + \bq) \right|}_{L^2(\bbR^d)}  \leq \|h\|_{L^{2}(\bbT^d)}\|\psi\|_{L^{2}(\bbR^d)}. 
\end{equation*}
In particular, for sufficiently small $\veps > 0$, the function $h\left({\bx\over \veps}\right)  \psi(\bx)\in L^{2}(\mathbb{R}^{d})$ and 
\begin{equation*}
\limsup_{\veps\to 0}\|h\left({\cdot \over\veps}\right)  \psi(\cdot) \|_{L^{2}(\bbR^{d})} \leq \|h\|_{L^{2}(\bbT^d)}\|\psi\|_{L^{2}(\bbR^d)}\,.
\end{equation*}
\end{lemma}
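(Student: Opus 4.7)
The plan is a Darboux--style upper Riemann-sum argument that leverages the $\veps$-periodicity of $h(\cdot/\veps)$ combined with the continuity of $\psi$.

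First I would decompose $\bbR^d$ into the disjoint union of scaled unit cells $C_\bk^\veps := \veps(\bk + \bbT^d)$ for $\bk \in \bbZ^d$, and for fixed $\veps > 0$ and $\bq \in \bbR^d$, bound $|\psi(\bx+\bq)|^2 \leq M_\bk(\veps,\bq) := \sup_{\bxi \in C_\bk^\veps} |\psi(\bxi+\bq)|^2$ on each cell (finite by continuity of $\psi$). The change of variables $\by = \bx/\veps - \bk$ combined with the $\bbZ^d$-periodicity of $|h|^2$ reduces $\intdm{C_\bk^\veps}{|h(\bx/\veps)|^2}{\bx}$ to $\veps^d \Vnorm{h}_{L^2(\bbT^d)}^2$, yielding the Darboux upper bound
\begin{equation*}
\Vnorm{|h(\cdot/\veps)||\psi(\cdot+\bq)|}_{L^2(\bbR^d)}^2 \leq \Vnorm{h}_{L^2(\bbT^d)}^2 \cdot \veps^d \sum_{\bk \in \bbZ^d} M_\bk(\veps,\bq).
\end{equation*}

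The main estimate to establish is then $\limsup_{\veps \to 0} \sup_\bq \veps^d \sum_\bk M_\bk(\veps,\bq) \leq \Vnorm{\psi}_{L^2(\bbR^d)}^2$, i.e.\ the upper Riemann sum of $|\psi|^2$ converges to its Lebesgue integral uniformly in the base point $\bq$. By continuity of $|\psi|^2$,
\begin{equation*}
\veps^d M_\bk(\veps,\bq) \leq \intdm{C_\bk^\veps + \bq}{|\psi|^2}{\by} + \veps^d \omega_\bk(\veps,\bq),
\end{equation*}
where $\omega_\bk(\veps,\bq)$ is controlled by the oscillation of $|\psi|^2$ over the shifted cell $C_\bk^\veps + \bq$. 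Summing over $\bk$ gives $\veps^d \sum_\bk M_\bk(\veps,\bq) \leq \Vnorm{\psi}_{L^2(\bbR^d)}^2 + \veps^d \sum_\bk \omega_\bk(\veps,\bq)$, reducing the problem to showing that the total oscillation vanishes uniformly in $\bq$. For smooth compactly supported $\psi_R \in C_c^\infty(\bbR^d)$ with $\supp \psi_R \subset B_R$, uniform continuity of $|\psi_R|^2$ forces $\omega_\bk \to 0$ uniformly in $\bk$ and $\bq$, and only $O((R/\veps)^d)$ cells carry non-trivial contribution, so the error is $o(1)$ as $\veps \to 0$ uniformly in $\bq$. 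For general $\psi \in L^2(\bbR^d) \cap C(\bbR^d)$ I would approximate $\psi$ in $L^2$ by such $\psi_R$ with $\Vnorm{\psi - \psi_R}_{L^2(\bbR^d)} < \eta$ and apply Minkowski's inequality to split the $L^2$ norm into a main term from $\psi_R$ and a tail from $\psi - \psi_R$; iterating the Darboux estimate on the tail and sending $\veps \to 0$ first and then $\eta \to 0$ produces the claimed bound. The ``In particular'' assertion is the special case $\bq = 0$, with the $L^2$-boundedness of $h(\cdot/\veps) \psi$ for sufficiently small $\veps$ a consequence of the finite limsup.

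The main obstacle is the uniform-in-$\bq$ control of the Darboux oscillation remainder for general $\psi \in L^2 \cap C$, since local oscillations of $|\psi|^2$ need not decay at spatial infinity. This is circumvented by the density approximation above combined with the key observation that, by $\bbZ^d$-periodicity of $h$, only the fractional part of $\bq/\veps$ modulo $\bbZ^d$ --- a parameter in the compact torus $\bbT^d$ --- actually enters the estimate, so the ``sup over $\bq \in \bbR^d$'' reduces to a sup over a compact set.
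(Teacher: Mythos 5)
Your computation up to the Darboux bound is exactly the paper's: the reduction of $\sup_{\bq \in \bbR^d}$ to a compact set of shifts via the $\veps\bbZ^d$-periodicity of $h(\cdot/\veps)$, the decomposition of $\bbR^d$ into $\veps$-cells, the identity $\int_{\veps(\bk+\bbT^d)}|h(\bx/\veps)|^2\,\rmd\bx=\veps^d\Vnorm{h}^2_{L^2(\bbT^d)}$, and the resulting bound by $\Vnorm{h}^2_{L^2(\bbT^d)}$ times an upper Darboux sum of $|\psi|^2$ all appear, in the same order, in the paper's proof (the paper performs the shift reduction first, so its sum runs over slightly enlarged cells; that is only bookkeeping).

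The genuine gap is in your final approximation step. After splitting $\psi=\psi_R+(\psi-\psi_R)$ with $\Vnorm{\psi-\psi_R}_{L^2(\bbR^d)}<\eta$, you must show that $\sup_{\bq}\Vnorm{\,|h(\cdot/\veps)|\,|(\psi-\psi_R)(\cdot+\bq)|\,}_{L^2(\bbR^d)}$ is $O(\eta)$ uniformly for small $\veps$, and ``iterating the Darboux estimate on the tail'' does not give this: it yields $\Vnorm{h}_{L^2(\bbT^d)}$ times the square root of the upper Darboux sum of $|\psi-\psi_R|^2$, and for a function that is merely continuous and small in $L^2$ that upper sum is not controlled by the $L^2$ norm --- it can be $+\infty$ at every scale $\veps$ (in $d=1$, a continuous tail with spikes of height $1$ and width $2^{-n}$ at the integers $n\ge N$ has arbitrarily small $L^2$ norm, yet every $\veps$-grid upper sum diverges). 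The same example shows your intermediate ``main estimate'' --- uniform convergence of $\veps^d\sum_\bk M_\bk(\veps,\bq)$ to $\Vnorm{\psi}_{L^2(\bbR^d)}^2$ --- is false for general $\psi\in C(\bbR^d)\cap L^2(\bbR^d)$, so no density argument routed through the full Darboux bound can close the general case: the tail estimate you need is an instance of the lemma itself, and the argument is circular. To be fair, you have put your finger on the genuinely delicate point: the paper's own proof simply asserts that $\cP_{\veps}$, the upper Darboux sum of $|\psi|^2\in C(\bbR^d)\cap L^1(\bbR^d)$, converges to its integral, which also requires more than continuity plus integrability; a complete argument has to exploit something beyond $L^2$ smallness of the tail, e.g.\ truncating $h$ and using periodic averaging against translates together with equi-integrability of $|\psi|^2$, rather than a pointwise sup bound on the tail.
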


\begin{proof}
Fix $\veps >0$ and introduce 
\begin{equation}\label{eq:Phi2IsBdd}
M_{\veps} := \sup_{\bq \in \bbR^d} \Vnorm{\left| h \left( \frac{\cdot}{\veps}  \right) \right| \, \left|\psi (\cdot + \bq) \right|}_{L^2(\bbR^d)}. 
\end{equation}
Now note that any $\bq \in \bbR^d$ can be written as $\bq = \bq_0 + \bq_1$, where $\bq_1 \in \veps \bbT^d := [0,\veps]^d$ and $\bq_0 \in \veps \bbZ^d$.
Then, since $h$ is periodic, for any $\by \in \bbR^d$ and any $\bq \in \bbR^d$
\begin{equation*}
\left| h \left( \frac{\by}{\veps} \right) \right| \, \left| h (\by + \bq) \right| = \left| h \left( \frac{\by}{\veps} \right) \right| \, \left| \psi (\by + \bq_1 + \bq_0) \right| =  \left| h \left( \frac{\by+\bq_0}{\veps} \right) \right| \, \left| \psi (\by + \bq_1 + \bq_0) \right|\,.
\end{equation*}
Thus, since a change of coordinates doesn't change the $L^2$-norm of a function, we have for any $\bq \in \bbR^d$
\begin{equation*}
\begin{split}
\intdm{\bbR^d}{\left| h \left( \frac{\by}{\veps} \right) \right|^2 \, \left| \psi (\by + \bq) \right|^2 }{\by} &=  \intdm{\bbR^d}{\left| h \left( \frac{\by+\bq_0}{\veps} \right) \right|^2 \, \left| \psi (\by + \bq_1 + \bq_0) \right|^2}{\by} \\
	&= \intdm{\bbR^d}{\left| h \left( \frac{\by}{\veps} \right) \right|^2 \, \left| \psi (\by + \bq_1) \right|^2}{\by}\,, \qquad \bq_1 \in \veps \bbT^d\,.
\end{split}
\end{equation*}
Therefore taking the supremum over $\bq \in \veps \bbT^d$ is equivalent to taking the supremum over $\bq \in \bbR^d$ and so 
\begin{equation*}
M_{\veps} = \sup_{\bq \in \veps \bbT^d} \Vnorm{\left| h \left( \frac{\cdot}{\veps} \right) \right| \, \left| \psi (\cdot + \bq) \right|}_{L^2(\bbR^d)}\,.
\end{equation*}
Now, set $\rmI_{\bk}(\veps) = \veps \bk + \veps \bbT^d$, for $\bk \in \bbZ^d$. Then by a change of variables
\begin{equation*}
\begin{split}
M_{\veps} &= \sup_{\bq \in \veps \bbT^d} \sum_{\bk \in \bbZ^d} \intdm{\rmI_{\bk}(\veps)}{\left| h \left( \frac{\by}{\veps} \right) \right|^2 \left| \psi(\by+\bq) \right|^2 }{\by} \\
	&\leq \sum_{\bk \in \bbZ^d} \sup_{\substack{\bq \in \veps \bbT^d \\ \by \in \rmI_{\bk}(\veps)}} \left| \psi(\by+\bq) \right|^2  \intdm{\rmI_{\bk}(\veps)}{\left|h\left( \frac{\by}{\veps} \right) \right|^2 }{\by} \\
	&= \Vnorm{h}_{L^2(\bbT^d)}^2  \left( \veps^d  \sum_{\bk \in \bbZ^d} \sup_{\substack{\bq \in \veps \bbT^d \\ \by \in \rmI_{\bk}(\veps)}} \left| \psi(\by+\bq) \right|^2 \right) := \Vnorm{h}_{L^2(\bbT^d)}^2 \cP_{\veps} \,. \\
\end{split}
\end{equation*}
Note that $\cP_{\veps}$ is the Darboux upper sum (with respect to the $\veps$-grid of $\bbR^d$) of the function $|\psi|^{2}\in C(\bbR^{d})\cap L^{1}(\bbR^{d})$, and so $\cP_{\veps}$ converges to the Darboux integral (and thus the Riemann integral) of $|\psi|^2$ as $\veps \to 0$. 
\end{proof}

\section{Asymptotic analysis}\label{asy}
In this section we will prove the main result of the paper. The proof relies on the existence of tensors that act as correctors.
These correctors will be found by solving a system of auxiliary equations, which we obtain in the next subsection.
 
\subsection{Auxiliary problems}

Assume $\bu \in \big[ C^3(\bbR^d) \big]^d \cap \big[H^3(\bbR^d) \big]^d$ is given. For a third-order tensor $\mathfrak{A} = (a^{ijk}) \in \big[ L^2(\bbT^d) \big]^{d^3}$ and a fourth-order tensor $\mathfrak{B} = (b^{ijk \ell}) \in \big[ L^2(\bbT^d) \big]^{d^4}$ that are yet to be determined, define the function 
  $\bw^{\veps} = (w^{\veps}_1, w^{\veps}_2, \ldots , w^{\veps}_d)$ given by the ansatz 
\begin{equation}\label{eq:Ansatz}
\bw^{\veps}(\bx) = \bu(\bx) + \veps^{ } \mathfrak{A}\left(\frac{\bx}{\veps} \right) D{\bf u}(\bx) + \veps^2  \mathfrak{B}\left(\frac{\bx}{\veps} \right) D^{2}{\bf u}(\bx)\,,
\end{equation}
where we recall that $D{\bf u}$ is the gradient matrix of ${\bf u}$ given by $(D{\bf u})_{ij}:={\p u_{i} \over \p{x_j}}$ and $D^{2}{\bf u}$ is the third-order tensor of second partial derivatives of ${\bf u}$ given by $((D^{2}{\bf u})_{ijk}:={\p^2 u^{i}\over \p{x_k}\p{x_j} } )$. 
Applying the operator $\mathbb{L}^{\veps}$ on $\bw^{\veps}$, we have that  
\begin{equation*}
\begin{split}
\bbL^{\veps} \bw^{\veps}(\bx) &= {1\over \veps^{2}} \lambda(\bx, {\bx\over \epsilon}) \int_{\bbR^{d}} {\bfs {\rmK}}_{\veps}(\bx-\by)  {\mu_s} \left(\frac{\bx}{\veps}, \frac{\by}{\veps} \right)\left(\bw^{\veps}(\by)-\bw^{\veps}(\bx)\right) \, \rmd \bx\\
&= \frac{1}{\veps^{2}} \lambda \left( \bx, \frac{\bx}{\veps} \right)\int_{\bbR^d} \rho \left( \bz \right)  {\mu_s} \left( \frac{\bx}{\veps},\frac{\bx}{\veps} - \bz \right)\bigg\lbrace({\bf u}(\bx-\veps \bz)-{\bf u}(\bx))\cdot{{\bz \over |\bz|} }\\
	&   + \veps \left(\left\langle\mathfrak{A} \left( \frac{\bx}{\veps} - \bz \right) D{\bf u}(\bx-\veps \bz), {\bz\over |\bz|}\right\rangle-\left\langle\mathfrak{A} \left( \frac{\bx}{\veps} \right) D{\bf u}(\bx), {\bz\over |\bz|}\right\rangle\right) \\
	&+ \veps^2 \left(\left\langle\mathfrak{B}\left(\frac{\bx}{\veps} - \bz \right) D^{2}{\bf u}(\bx - \veps \bz), {\bz\over |\bz|}\right\rangle- \left\langle\mathfrak{B}\left(\frac{\bx}{\veps}\right) D^{2}{\bf u}(\bx), {\bz\over |\bz|}\right\rangle \right)  \bigg\rbrace 
	\frac{\bz}{|\bz|} \, \mathrm{d}\bz\,,
\end{split}
\end{equation*}
where we have made the change of variables $\frac{\bx-\by}{\veps} = \bz$. We now use the Taylor expansions
\begin{equation*}
\begin{split}
{\bf u}(\bx-\veps \bz) &= {\bf u}(\bx) -\veps D{\bf u}(\bx) \bz +\veps^2 \int_{0}^{1} D^{2}{\bf u}(\bx-\veps t \bz)\bz\otimes \bz(1-t) \, \rmd t\,,\\
D{\bf u}(\bx-\veps \bz) &= D{\bf u}(\bx) -\veps D^{2}{\bf u}(\bx) \bz +\veps^2 \int_{0}^{1} D^{3}{\bf u}(\bx-\veps t \bz)\bz\otimes \bz (1-t) \, \rmd t\,,\\
D^2{\bf u}(\bx-\veps \bz) &= D^2{\bf u}(\bx)  -\veps \int_{0}^{1} D^{3}{\bf u}(\bx-\veps t \bz) \bz \, \rmd t\,.
\end{split}
\end{equation*}
Substituting these expansions into the formula for $\bbL^{\veps} \bw^{\veps}(\bx)$ and collecting powers of $\veps$ we obtain 
\begin{equation*}
\begin{split}
\bbL^{\veps} & \bw^{\veps}(\bx) = \frac{1}{\veps^{2}} \lambda \left( \bx, \frac{\bx}{\veps} \right)\int_{\bbR^d} \rho \left( \bz \right)  {\mu_s} \left( \frac{\bx}{\veps},\frac{\bx}{\veps} - \bz \right)\bigg\lbrace
-\veps \Vint{ D{\bf u}(\bx) \bz, {\bz\over |\bz|}} \\
	&\qquad \qquad + \veps^2 \int_{0}^{1} \Vint{ D^{2}{\bf u}(\bx-\veps t \bz)\bz\otimes \bz, {\bz \over |\bz|} } (1-t) \, \rmd t\\
	&\qquad + \veps \left\langle\left(\mathfrak{A} \left( \frac{\bx}{\veps} - \bz \right) - \mathfrak{A}\left(\frac{\bx}{\veps} \right)\right)D{\bf u}(\bx),  {\bz\over |\bz|}\right\rangle-\veps^2\left\langle \mathfrak{A} \left( \frac{\bx}{\veps}- \bz\right)D^{2}{\bf u}(\bx) \bz,  {\bz\over |\bz|}\right\rangle\\
	&\qquad +\veps^3  \left\langle \mathfrak{A} \left( \frac{\bx}{\veps}- \bz\right)\int_{0}^{1} D^{3}{\bf u}(\bx-\veps t \bz)\bz\otimes \bz (1-t) \, \rmd t, {\bz\over |\bz|}\right\rangle\\
	&\qquad + \veps^2 \left\langle\left(\mathfrak{B}\left(\frac{\bx}{\veps} - \bz \right) -\mathfrak{B}\left(\frac{\bx}{\veps}\right) \right)D^{2}{\bf u}(\bx), {\bz\over |\bz|}\right\rangle \\
	&\qquad \qquad -\veps^3 \left\langle\mathfrak{B}\left(\frac{\bx}{\veps} - \bz \right) \int_{0}^{1} D^{3}{\bf u}(\bx-\veps t \bz) \bz \, \rmd t, {\bz\over |\bz|}\right\rangle \bigg\rbrace \frac{\bz}{|\bz|} \, \mathrm{d}\bz\,. 
\end{split}
\end{equation*}
We summarize the above calculation by writing the equality in the compact form
\begin{equation}\label{op-ansaz}
\bbL^{\veps} \bw^{\veps}(\bx) = \frac{1}{\veps} \lambda \left( \bx, \frac{\bx}{\veps} \right)\Psi_a\left(D{\bf u}(\bx), \frac{\bx}{\veps}\right) \, + \lambda \left( \bx, \frac{\bx}{\veps} \right)\Phi_b\left(D^{2}{\bf u}(\bx), \frac{\bx}{\veps}\right) \, + {\bfs \vphi}^{\veps}(\bx)\,,
\end{equation}
where the vector-valued function $\Psi_a: \bbR^{d^2}\times\bbT^{d}\to \bbR^{d}$ is given by 
\begin{equation}\label{psi}
\Psi_a(\bbM, {\bfs \xi}):= \int_{\bbR^d} \frac{\rho(\bz)}{|\bz|^2}  \mu_s ({\bfs \xi},{\bfs \xi} - \bz) \Big( \Vint{ \big( \mathfrak{A} \left( {\bfs \xi} - \bz \right) - \mathfrak{A}({\bfs \xi} ) \big) \bbM, \bz} - \Vint{\bbM\bz, \bz}  \Big) \bz \, \rmd \bz\,, 
\end{equation}
and the vector-valued function $\Phi_b: \bbR^{d^{3}}\times\bbT^{d}\to \bbR^{d}$ is given by 
\begin{equation}\label{phi}
\begin{split}
\Phi_b(\mathfrak{M}, {\bfs \xi}) = \int_{\bbR^d} 
\frac{\rho(\bz)}{|\bz|^2}  {\mu_s}( {\bfs \xi},{\bfs \xi} - \bz ) \Big( {1\over 2} & \Vint{ \mathfrak{M}\bz\otimes\bz, \bz} - \Vint{\mathfrak{A} ({\bfs \xi} - \bz)\mathfrak{M}{\bz}, \bz} \\
	&\qquad + \Vint{ \big( \mathfrak{B} ( {\bfs \xi} - \bz ) - \mathfrak{B}({\bfs \xi}) \big) \mathfrak{M}, \bz} \Big) \bz \, \rmd \bz\,.
\end{split}
\end{equation}

The remainder term
%${\bfs \vphi}(\bx) := \bbL^{\veps} \bw^{\veps}(\bx) -\frac{1}{\veps} \lambda ( \bx, \frac{\bx}{\veps})\Psi_a (D{\bf u}(\bx), \frac{\bx}{\veps} ) - \lambda ( \bx, \frac{\bx}{\veps} )\Phi_b (D^{2}{\bf u}(\bx), \frac{\bx}{\veps} )$
${\bfs \vphi}^{\veps}(\bx)$ will be explicitly written later.

 The following lemma gives a set of auxiliary systems that will be used for the construction of the tensors $\mathfrak{A} = (a^{ijk}) \in \big[ L^2(\bbT^d) \big]^{d^3}$ and $\mathfrak{B} = (b^{ijk \ell}) \in \big[ L^2(\bbT^d) \big]^{d^4}$. These tensors will be determined and will completely characterize the special linear maps $\Psi_a$ and $\Phi_b$.  We remark that the integrals in \eqref{psi} and \eqref{phi}  can be shown to be well-defined using a similar argument as in the proof of Proposition \ref{cpt-K}.  
\begin{lemma}\label{lemma-CP}
Assume that $\rho, {\mu_s}$ and $\lambda$ satisfy \eqref{eq:A1}, \eqref{eq:A2} and \eqref{eq:A3}. 
Then there exists a third-order tensor $\mathfrak{A} = (a^{ijk}) \in \big[ L^2(\bbT^d) \big]^{d^3}$, a fourth-order tensor $\mathfrak{B} = (b^{ijk \ell}) \in \big[ L^2(\bbT^d) \big]^{d^4}$, and an elasticity tensor $\mathfrak{C}(\bx)$ satisfying \eqref{eq:ElasticityTensor1}--\eqref{eq:ElasticityTensor2} such that 
\begin{equation*}
\Psi_a(\bbM, {\bfs \xi})={\bfs 0},\quad \forall \, \bbM\in\bbR^{d}\,, \quad  \forall \, {\bfs \xi}\in \bbT^{d}\,,
\end{equation*}
and
\begin{equation*}
\lambda(\bx, {\bfs \xi}) \Phi_b(\mathfrak{M}, {\bfs \xi}) := \mathfrak{C}(\bx)\mathfrak{M}\,, \quad \forall \, \mathfrak{M}\in \bbR^{d^3}\,,
\end{equation*}
where $\Psi_a$ and $\Phi_b$ are defined in \eqref{psi} and \eqref{phi} respectively. That is, the fourth-order tensor of coefficients of the linear map $\mathfrak{M}\mapsto \lambda(\bx, {\bfs \xi}) \Phi_b(\mathfrak{M}, {\bfs \xi}) $ can be made independent of ${\bfs \xi}$. 
\end{lemma}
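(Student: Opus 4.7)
The strategy is to recognize that both $\Psi_a(\bbM,\cdot)=\bfs 0$ and $\lambda(\bx,\cdot)\Phi_b(\mathfrak{M},\cdot)=\mathfrak{C}(\bx)\mathfrak{M}$ are, for each fixed parameter $\bbM$ or $\mathfrak{M}$, linear cell problems of the form $(\bbG-\bbK)\bv=\boldsymbol{g}$ on $[L^{2}(\bbT^{d})]^{d}$. Indeed, the change of variables $\bfs\eta=\bfs\xi-\bz$ in \eqref{psi}, together with the definitions of $\bbK$ and $\bbG$, turns $\Psi_a(\bbM,\bfs\xi)=\bfs 0$ into
\[
(\bbG-\bbK)\bigl(\mathfrak{A}(\cdot)\bbM\bigr)(\bfs\xi)=\boldsymbol{F}(\bbM,\bfs\xi), \qquad \boldsymbol{F}(\bbM,\bfs\xi):=\int_{\bbR^{d}}\boldsymbol{\rmK}(\bz)\,{\mu_s}(\bfs\xi,\bfs\xi-\bz)\,\bbM\bz\,\rmd\bz,
\]
and similarly \eqref{phi} yields a cell equation for $\mathfrak{B}(\cdot)\mathfrak{M}$ whose right-hand side has the form $\boldsymbol{H}(\mathfrak{M},\bfs\xi)-\lambda(\bx,\bfs\xi)^{-1}\mathfrak{C}(\bx)\mathfrak{M}$, with $\boldsymbol{H}$ depending on $\mathfrak{A}$, ${\mu_s}$ and $\rho$ but not on $\bx$ or $\mathfrak{B}$. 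The tensors $\mathfrak{A}$ and $\mathfrak{B}$ are then assembled from solutions of these scalar-parametrized problems by linearity in $\bbM$ and $\mathfrak{M}$.

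\textbf{Solvability.} By Proposition~\ref{PD-G} the operator $\bbG$ is invertible, and by Proposition~\ref{cpt-K} the operator $\bbK$ is compact, so $\bbG-\bbK=\bbG(\bbI-\bbG^{-1}\bbK)$ is Fredholm of index zero. The symmetrization argument already carried out in Lemma~\ref{exist-resolvent} applies essentially verbatim on $\bbT^{d}$ to show that $\bbG-\bbK$ is self-adjoint and nonnegative with quadratic form
\[
\Vint{(\bbG-\bbK)\bv,\bv}=\tfrac12\int_{\bbT^{d}}\int_{\bbR^{d}}\rho(\bz)\,{\mu_s}(\bfs\xi,\bfs\xi-\bz)\left|\tfrac{\bz}{|\bz|}\cdot\bigl(\bv(\bfs\xi)-\bv(\bfs\xi-\bz)\bigr)\right|^{2}\rmd\bz\,\rmd\bfs\xi.
\]
A field in $\ker(\bbG-\bbK)$ therefore satisfies $\Vint{\bz,\bv(\bfs\xi)-\bv(\bfs\xi-\bz)}=0$ for a.e.\ $\bfs\xi\in\bbT^{d}$ and a.e.\ $\bz\in\supp\rho$; expanding in a Taylor series and using \eqref{eq:A2} to supply directions $\bz/|\bz|$ filling an open cone forces the symmetric part of $D\bv$ to vanish, after which periodicity forces $\bv$ to be constant. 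Hence $\ker(\bbG-\bbK)\simeq\bbR^{d}$, and by self-adjointness the range of $\bbG-\bbK$ is precisely the $L^{2}$-orthogonal complement of constants.

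\textbf{Compatibility and construction of $\mathfrak{C}$.} For the $\mathfrak{A}$-problem, Fubini combined with the $\bz$-independence of $\int_{\bbT^{d}}{\mu_s}(\bfs\xi,\bfs\xi-\bz)\,\rmd\bfs\xi=\overline{\mu}$ (itself a consequence of periodicity) reduces the mean of $\boldsymbol{F}(\bbM,\cdot)$ to $\overline{\mu}\int_{\bbR^{d}}\rho(\bz)\bz\,|\bz|^{-2}\Vint{\bbM\bz,\bz}\,\rmd\bz$, which vanishes because $\rho$ is even and the remaining integrand is odd in $\bz$; thus $\mathfrak{A}$ exists by Fredholm alternative and is made unique by fixing zero mean. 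For the $\mathfrak{B}$-problem the mean of the right-hand side does not in general vanish, and we instead \emph{use} the compatibility condition to \emph{define} $\mathfrak{C}$ by
\[
\mathfrak{C}(\bx)\mathfrak{M}:=\Bigl(\int_{\bbT^{d}}\tfrac{\rmd\bfs\xi}{\lambda(\bx,\bfs\xi)}\Bigr)^{-1}\widetilde{\mathcal{C}}\mathfrak{M},\qquad \widetilde{\mathcal{C}}\mathfrak{M}:=\int_{\bbT^{d}}\boldsymbol{H}(\mathfrak{M},\bfs\xi)\,\rmd\bfs\xi.
\]
Since $\boldsymbol{H}$ involves only $\rho$, ${\mu_s}$ and $\mathfrak{A}$, none of which depends on $\bx$, the tensor $\widetilde{\mathcal{C}}$ is a genuine constant, producing the factorization \eqref{eq:ElasticityTensorForm}; the Fredholm alternative then delivers $\mathfrak{B}$.

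\textbf{Elastic symmetries and coercivity (main obstacle).} The most delicate part is verifying that $\widetilde{\mathcal{C}}$ satisfies \eqref{eq:ElasticityTensor1}--\eqref{eq:ElasticityTensor2}. The minor symmetries $c^{ijk\ell}=c^{ij\ell k}$ follow at once because $\mathfrak{M}$ enters $\boldsymbol{H}$ only through the contractions $\mathfrak{M}\bz\otimes\bz$ and $\mathfrak{A}(\bfs\xi-\bz)\mathfrak{M}\bz$, in which the $(k,\ell)$-symmetric part of $\mathfrak{M}$ alone contributes. The remaining minor symmetry $c^{ijk\ell}=c^{jik\ell}$ and the major symmetry $c^{ijk\ell}=c^{k\ell ij}$ are expected to emerge upon substituting the $\mathfrak{A}$-cell equation back into $\widetilde{\mathcal{C}}\mathfrak{M}$ and then performing the same symmetrization $(\bfs\xi,\bz)\mapsto(\bfs\xi-\bz,-\bz)$ used above, exploiting the symmetry ${\mu_s}(\bfs\xi,\bfs\eta)={\mu_s}(\bfs\eta,\bfs\xi)$ and evenness of $\rho$. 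Finally, for \eqref{eq:ElasticityTensor2} one tests $\widetilde{\mathcal{C}}$ against a symmetric matrix $\bbW$ and recognizes $\Vint{\widetilde{\mathcal{C}}\bbW,\bbW}$ as the quadratic form of $\bbG-\bbK$ evaluated on the affine-plus-corrector field $\bv(\bfs\xi):=\bbW\bfs\xi+\mathfrak{A}(\bfs\xi)\bbW$; the uniform lower bound $\gamma_{1}>0$ is then extracted from the cone nondegeneracy \eqref{eq:A2}, while the upper bound follows from $\Vnorm{\rho}_{L^{1}}$ together with the uniform ellipticity bound in \eqref{eq:A3}. This last step is where the argument is tightest, as it weaves together the Fredholm inverse of $\bbG-\bbK$, the structure of $\mathfrak{A}$, and the geometric nondegeneracy of $\rho$.
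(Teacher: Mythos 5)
Your proposal follows the same route as the paper: both auxiliary problems are rewritten as cell equations for $\bbG-\bbK$ on $\big[L^2(\bbT^d)\big]^d$ (up to a harmless sign in the right-hand side of the $\mathfrak{A}$-equation), solvability comes from the Fredholm alternative via Propositions \ref{PD-G} and \ref{cpt-K}, the compatibility condition for the $\mathfrak{A}$-problem is checked by parity and periodicity, the compatibility condition for the $\mathfrak{B}$-problem is used to \emph{define} $\mathfrak{C}(\bx)$ and yields the factorization \eqref{eq:ElasticityTensorForm}, and the ellipticity is meant to be read off from the quadratic form of $\bbG-\bbK$ evaluated at $\bq\mapsto\bbW\bq+\mathfrak{A}(\bq)\bbW$. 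So there is no methodological divergence to report; the problem is that at the two places where the real work lies, the argument as written has gaps.

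First, the kernel characterization. An element $\bv\in\ker(\bbG-\bbK)$ is only an $L^2(\bbT^d)$ vector field satisfying $\langle\bz,\bv(\bfs\xi)-\bv(\bfs\xi-\bz)\rangle=0$ for a.e.\ $\bfs\xi$ and a.e.\ $\bz\in\supp\rho$; it has no derivatives, so ``expanding in a Taylor series'' and concluding that the symmetric part of $D\bv$ vanishes is not legitimate as stated. Handling exactly this lack of regularity is the content of the paper's Proposition \ref{prop:KerOfK-G}, whose proof is genuinely geometric: local affinity on the cone-intersection sets $\widetilde{\Gamma}(\bq)$, a chaining argument to make $\boldsymbol{\rmQ}$ and $\bm$ global, and a separate periodicity step to force the skew part $\boldsymbol{\rmQ}$ to vanish. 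Your sketch is repairable---the constraint is linear in $\bv$ for each fixed $\bz$, so mollification in $\bfs\xi$ preserves it, after which the Taylor/quadratic-form argument (a quadratic form vanishing on the open set of directions $\cJ$ vanishes identically) together with periodicity shows each mollification is constant, hence so is $\bv$---but some such device must be supplied. Second, the elastic symmetries and the bound \eqref{eq:ElasticityTensor2} are only said to be ``expected to emerge.'' In the paper this is the second substantial step: substituting the $\mathfrak{A}$-cell equation and using Lemma \ref{lem:even-odd} gives the symmetric representation \eqref{alternative-for-C} of $\widetilde{\mathcal{C}}$, and the lower bound hinges on the observation that the cross term between $\langle\bbW\bz,\bz\rangle$ and the corrector increment integrates to zero by periodicity of $\mathfrak{A}$; only after this cancellation may the corrector contribution be discarded, and only then does the cone condition \eqref{eq:A2} (a basis inside the cone, polarization, compactness of the unit sphere of symmetric matrices) produce $\gamma_1>0$. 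As written, the passage from the nonnegative quadratic form to a bound below by $|\bbW|^2$ does not follow.
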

The proof of this lemma is technically involved, and we postpone it until Section \ref{aux}. We will at present assume the result of the lemma in order to prove Theorem \ref{thm:MainThm}. We will also need a result concerning the compatibility of the expansion \eqref{eq:Ansatz} with the nonlocal operators $\bbL^{\veps}$.

 We will also need the following proposition that establishes the existence of a differential operator that approximates the operator $\bbL^\veps$ over a class of smooth functions.

\begin{proposition}\label{lma:MainLemma}
Assume that $\rho, {\mu_s}$ and $\lambda$ satisfy \eqref{eq:A1}, \eqref{eq:A2} and \eqref{eq:A3}. 
Let $\mathfrak{A} = (a^{ijk}) \in \big[ L^2(\bbT^d) \big]^{d^3}$,  $\mathfrak{B} = (b^{ijk \ell}) \in \big[ L^2(\bbT^d) \big]^{d^4}$, and an elasticity tensor $\mathfrak{C}$ are as given in Lemma \ref{lemma-CP}.  
Then for any  given $\bu \in \big[ C^3(\bbR^d) \big]^d \cap \big[H^3(\bbR^d) \big]^d$ and for the function $\bw^{\veps} = (w^{\veps}_1, w^{\veps}_2, \ldots , w^{\veps}_d)$ defined by
\begin{equation*}
\bw^{\veps}(\bx) = \bu(\bx) + \veps \mathfrak{A}\left(\frac{\bx}{\veps} \right) D{\bf u}(\bx) + \veps^2  \mathfrak{B} \left(\frac{\bx}{\veps} \right) D^{2}\bu(\bx)\,,
\end{equation*}
we have
\begin{equation*}
\lim_{\veps\to 0}\|\bbL^{\veps} \bw^{\veps} -\mathfrak{C}(\bx) D^2\bu \|_{L^{2}(\mathbb{R}^{d})} \to 0
\end{equation*}
\end{proposition}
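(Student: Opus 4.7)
The plan is to combine Lemma \ref{lemma-CP} with the already-computed identity \eqref{op-ansaz}: since $\Psi_a \equiv 0$ kills the $\veps^{-1}$-term outright and $\lambda(\bx,\bx/\veps)\Phi_b(D^2\bu(\bx),\bx/\veps) = \mathfrak{C}(\bx) D^2\bu(\bx)$ turns the $\veps^0$-term into the putative limit, \eqref{op-ansaz} collapses to
\begin{equation*}
\bbL^{\veps}\bw^{\veps}(\bx) - \mathfrak{C}(\bx) D^2\bu(\bx) = {\bfs \vphi}^{\veps}(\bx)\,,
\end{equation*}
so the conclusion reduces to proving $\|{\bfs \vphi}^{\veps}\|_{L^2(\bbR^d)} \to 0$ as $\veps \to 0$.

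I would then write the remainder ${\bfs \vphi}^{\veps}$ explicitly by collecting those Taylor-expansion terms in the formula for $\bbL^{\veps}\bw^{\veps}$ that were not absorbed into $\Psi_a$ or $\Phi_b$. After cancellation of the outer $\veps^{-2}$, every such term carries one surplus power of $\veps$ and falls into one of three families: (i) the pure $\bu$-Taylor remainder $\int_0^1[D^2\bu(\bx - \veps t\bz) - D^2\bu(\bx)](1-t)\,dt$, which by the fundamental theorem of calculus equals $-\veps t\int_0^1 D^3\bu(\bx - s\veps t\bz)\bz\,ds$; (ii) a term of the form $\veps\,\mathfrak{A}(\bx/\veps - \bz)\int_0^1 D^3\bu(\bx - \veps t\bz)\bz\otimes\bz(1-t)\,dt$ coming from the $\mathfrak{A}$-correction; (iii) the analogous term $\veps\,\mathfrak{B}(\bx/\veps - \bz)\int_0^1 D^3\bu(\bx - \veps t\bz)\bz\,dt$ from the $\mathfrak{B}$-correction. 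Each is integrated against $\rho(\bz)\bz\otimes\bz/|\bz|^2$ and weighted by the bounded factors $\lambda(\bx,\bx/\veps)$ and $\mu_s(\bx/\veps,\bx/\veps-\bz)$.

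The third step is to estimate each family in $L^2(\bbR^d)$ by Minkowski's inequality, pulling the $L^2_{\bx}$-norm inside the $\bz$-integral. For family (i) the $\bz$-factors contribute $|\bz|^2\rho(\bz)$ in absolute value (one $|\bz|$ from the FTC, one from the kernel), and translation invariance of the $L^2$-norm combined with the finite moment $\int|\bz|^2\rho(\bz)\,d\bz = a_2$ supplied by \eqref{eq:A1} yields a bound of order $\veps\,\|D^3\bu\|_{L^2}$. For families (ii) and (iii) the correctors $\mathfrak{A}, \mathfrak{B}$ are only in $L^2(\bbT^d)$, so an $L^\infty$-type estimate is unavailable; here I would invoke Lemma \ref{lem:darboux}. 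After the change of variable $\bx' = \bx - \veps\bz$, the product $\mathfrak{A}(\bx/\veps - \bz)\cdot D^3\bu(\bx - s\veps t\bz)$ becomes $\mathfrak{A}(\bx'/\veps)\cdot D^3\bu(\bx' + \veps(1-st)\bz)$, matching exactly the form $h(\cdot/\veps)\psi(\cdot+\bq)$ treated in Lemma \ref{lem:darboux}; since $\bu \in [C^3(\bbR^d)]^d\cap[H^3(\bbR^d)]^d$, each entry of $D^3\bu$ belongs to $C(\bbR^d)\cap L^2(\bbR^d)$, and the lemma provides the bound $(1+o(1))\|\mathfrak{A}\|_{L^2(\bbT^d)}\|D^3\bu\|_{L^2(\bbR^d)}$ uniformly in $\bq$ and hence in $\bz$. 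An identical argument handles family (iii) with $\mathfrak{B}$ in place of $\mathfrak{A}$.

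Assembling these estimates delivers a bound of the form
\begin{equation*}
\|{\bfs \vphi}^{\veps}\|_{L^2(\bbR^d)} \leq C\veps\,\bigl(1 + \|\mathfrak{A}\|_{L^2(\bbT^d)} + \|\mathfrak{B}\|_{L^2(\bbT^d)}\bigr)\,\|\bu\|_{H^3(\bbR^d)}
\end{equation*}
for all sufficiently small $\veps$, where $C$ absorbs the finite $\rho$-moments $a_1$, $a_2$, and $\int|\bz|\rho(\bz)\,d\bz$ (finite by Cauchy--Schwarz applied to $\rho^{1/2}\cdot\rho^{1/2}$ together with \eqref{eq:A1}), and this tends to zero. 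The main obstacle is precisely the pairing of the oscillatory $L^2$-periodic correctors $\mathfrak{A}, \mathfrak{B}$ with translated arguments of $D^3\bu$, where the translations themselves depend on the integration variable $\bz$; ordinary Young-type inequalities do not suffice because of the lack of essential boundedness, and Lemma \ref{lem:darboux}, which supplies a uniform-in-shift $L^2$-bound for products $h(\cdot/\veps)\psi(\cdot+\bq)$, is exactly what unlocks the estimate.
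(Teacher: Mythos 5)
Your overall strategy coincides with the paper's: reduce via Lemma \ref{lemma-CP} and the identity \eqref{op-ansaz} to showing $\|{\bfs \vphi}^{\veps}\|_{L^2(\bbR^d)}\to 0$, split the remainder into the three families you list, and handle the $\mathfrak{A}$- and $\mathfrak{B}$-terms by Minkowski's inequality together with Lemma \ref{lem:darboux} (this is exactly how the paper treats ${\bfs \vphi}_2^{\veps}$ and ${\bfs \vphi}_3^{\veps}$). However, your treatment of family (i) has a genuine gap: you have miscounted the powers of $|\bz|$. In that term the difference $D^2\bu(\bx-\veps t\bz)-D^2\bu(\bx)$ already sits under the weight $|\bz|^2\rho(\bz)$ (the Taylor remainder carries $\bz\otimes\bz$, and the kernel's $\bz\otimes\bz/|\bz|^2$ only normalizes), so applying the fundamental theorem of calculus to convert the difference into $D^3\bu$ introduces an additional factor $\veps|\bz|$ and the resulting bound requires $\int_{\bbR^d}|\bz|^3\rho(\bz)\,\rmd\bz<\infty$. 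Assumption \eqref{eq:A1} only provides the zeroth and second moments ($a_1$ and $a_2$; the first moment follows by Cauchy--Schwarz), so the estimate of order $\veps\,\|D^3\bu\|_{L^2}$ you claim for family (i) — and hence the global $O(\veps)$ rate in your final bound — is not available under the stated hypotheses.

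The fix is the paper's route: do not differentiate further in family (i). Keep the difference $D^2\bu(\cdot-\veps t\bz)-D^2\bu(\cdot)$, split the $\bz$-integral at a radius $R$, bound the far region by $\alpha_2^2\|D^2\bu\|_{L^2}\int_{|\bz|>R}|\bz|^2\rho(\bz)\,\rmd\bz$ (small for $R$ large by \eqref{eq:A1}), and on $\{|\bz|\le R\}$ use the continuity of translation in $L^2$, which gives $\sup_{|\bz|\le R}\|D^2\bu(\cdot-\veps\bz)-D^2\bu(\cdot)\|_{L^2}\to 0$ as $\veps\to 0$; this yields convergence without a rate for that term, which is all the proposition asserts. (Alternatively, you could apply your FTC argument only on $\{|\bz|\le R\}$, where the extra $|\bz|$ is bounded by $R$, and use the tail estimate outside, but the uniform-in-$\veps$ third-moment bound you invoke globally does not exist.) The rest of your argument — including the observation that Lemma \ref{lem:darboux} is exactly what replaces an unavailable $L^\infty$ bound on $\mathfrak{A}$ and $\mathfrak{B}$ — is correct and matches the paper.
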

\begin{proof} Notice that by using Lemma \ref{lem:darboux}, for small $\veps$, the vector field $\bw^\veps\in L^{2}(\bbR^d)$. Moreover,  we have rewritten $\bbL^{\veps} \bw^{\veps} $   in \eqref{op-ansaz} as 
\begin{equation*}
\bbL^{\veps} \bw^{\veps}(\bx) = \frac{1}{\veps} \lambda \left( \bx, \frac{\bx}{\veps} \right)\Psi_a\left(D{\bf u}(\bx), \frac{\bx}{\veps}\right) \, + \lambda \left( \bx, \frac{\bx}{\veps} \right)\Phi_b\left(D^{2}{\bf u}(\bx), \frac{\bx}{\veps}\right) \, + {\bfs \vphi}^{\veps}(\bx)
\end{equation*}
where we now explicitly write 
\begin{equation*}
\begin{split}
{\bfs \vphi}^{\veps} &(\bx) \\
	&=\lambda \left( \bx, \frac{\bx}{\veps} \right)\int_{\bbR^d} \rho \left( \bz \right)  {\mu_s} \left( \frac{\bx}{\veps},\frac{\bx}{\veps} - \bz \right)\bigg\lbrace \int_{0}^{1} \left\langle D^{2}{\bf u}(\bx-\veps t \bz)\bz\otimes \bz, {\bz \over |\bz|} \right\rangle (1-t) \, \rmd t \\
	& \quad -{1\over 2} \left\langle D^{2}{\bf u}(\bx)\bz\otimes \bz, {\bz \over |\bz|} \right\rangle  
	+ \veps  \left\langle \mathfrak{A} \left( \frac{\bx}{\veps}- \bz\right)\int_{0}^{1} D^{3}{\bf u}(\bx-\veps t \bz)\bz\otimes \bz (1-t) \, \rmd t, {\bz\over |\bz|}\right\rangle\\
&\quad -\veps \left\langle\mathfrak{B}\left(\frac{\bx}{\veps} - \bz \right) \int_{0}^{1} D^{3}{\bf u}(\bx-\veps t \bz) \bz \, \rmd t, {\bz\over |\bz|}\right\rangle \bigg\rbrace \frac{\bz}{|\bz|} \, \mathrm{d}\bz\,.
\end{split}
\end{equation*}
By the choice of $\mathfrak{A} = (a^{ijk}) \in \big[ L^2(\bbT^d) \big]^{d^3}$,  $\mathfrak{B} = (b^{ijk \ell}) \in \big[ L^2(\bbT^d) \big]^{d^4}$, and the elasticity tensor $\mathfrak{C}$ given in Lemma \ref{lemma-CP}
we have that for any $\veps>0,$
\begin{equation*}
\Psi_a\left(D{\bf u}(\bx), \frac{\bx}{\veps}\right) = 0\,, \quad\forall \, \bx\in \bbR^{d}\,, 
\end{equation*}
and that 
\begin{equation*}
\lambda \left( \bx, \frac{\bx}{\veps} \right)\Phi_b\left(D^{2}{\bf u}(\bx), \frac{\bx}{\veps}\right)  = \mathfrak{C}(\bx) D^2\bu(\bx), \quad\forall \, \bx\in \bbR^{d}\,.
\end{equation*}
To prove the proposition, it suffices to show that  $\|{\bfs \varphi}^\veps\|_{L^{2}(\bbR^{d})} \to 0$ as $\veps \to 0$ for these tensors $\mathfrak{A}$,  $\mathfrak{B}$, and $\mathfrak{C}$.  
We begin by writing ${\bfs \vphi}^{\veps} = {\bfs \vphi}_{1}^{\veps} +{\bfs \vphi}_{2}^{\veps} + {\bfs \vphi}_{3}^{\veps}$, where
\begin{equation*}
\begin{split}
{\bfs \vphi}_{1}^{\veps}(\bx) &= \lambda \Big( \bx, \frac{\bx}{\veps} \Big) \int_{\bbR^d} \frac{\rho(\bz)}{|\bz|^2}  {\mu_s} \left(\frac{\bx}{\veps}, \frac{\bx}{\veps} - \bz \right) \\
	&\hspace{0.75in} \times  \intdmt{0}{1}{\Vint{\big( \grad^2 \bu(\bx-\veps t \bz) - \grad^2 \bu(\bx) \big)\bz\otimes\bz,\bz}(1-t)}{t}  \,\bz \, \rmd \bz\,, \\
	{\bfs \vphi}_{2}^{\veps}(\bx) &= \veps\lambda \Big( \bx, \frac{\bx}{\veps} \Big) \int_{\bbR^d} \rho \left( \bz \right)  {\mu_s} \left(\frac{\bx}{\veps}, \frac{\bx}{\veps} - \bz \right) \\
	&\hspace{0.75in} \times \left\langle \mathfrak{A} \left( \frac{\bx}{\veps}- \bz\right)\int_{0}^{1} D^{3}{\bf u}(\bx-\veps t \bz)\bz\otimes \bz (1-t) \, \rmd t, {\bz\over |\bz|}\right\rangle \frac{\bz}{|\bz|} \,\mathrm{d}\bz\,, \\
	{\bfs \vphi}_{3}^{\veps}(\bx) &=  - \veps\lambda \Big( \bx, \frac{\bx}{\veps} \Big) \int_{\bbR^d} \rho \left( \bz \right)  {\mu_s} \left(\frac{\bx}{\veps}, \frac{\bx}{\veps} - \bz \right) \\
	&\hspace{0.75in} \times \left\langle\mathfrak{B}\left(\frac{\bx}{\veps} - \bz \right) \int_{0}^{1} D^{3}{\bf u}(\bx-\veps t \bz) \bz \, \rmd t, {\bz\over |\bz|}\right\rangle  \frac{\bz}{|\bz|} \, \mathrm{d}\bz\,.
\end{split}
\end{equation*}
We will bound each separately, and show that each one converges to $0$ as $\veps \to 0$. First, for $R > 0$ to be determined,
\begin{equation*}
{\bfs \vphi}_{1}^{\veps}(\bx) =  \lambda \left( \bx, \frac{\bx}{\veps} \right)\int_{|\bz|\leq R} \cdots \, \rmd \bz +  \lambda \left( \bx, \frac{\bx}{\veps} \right)\int_{|\bz| > R} \cdots \, \rmd \bz := {\bfs \vphi}_{1}^{\veps, (\leq R)}(\bx) + {\bfs \vphi}_{1}^{\veps, (> R)}(\bx)\,.
\end{equation*}
Then taking the $L^2$ norm, the first term can be estimated as
\begin{equation*}
\begin{split}
& \Vnorm{{\bfs \vphi}_{1}^{\veps, (\leq R)}}_{L^2(\bbR^d)} \\
	&\leq \alpha_2^2 \sup_{|\bz|\leq R} \Vnorm{D^2 \bu (\cdot - \veps \bz) - D^2 \bu(\cdot)}_{L^2(\bbR^d)} \left( \intdm{\bbR^d}{|\bz|^2 \rho(\bz)}{\bz} \right) \left( \intdmt{0}{1}{(1-t)}{t} \right) \\
	&= \frac{\alpha_2^2 a_2}{2} \sup_{|\bz|\leq R} \Vnorm{D^2 \bu (\cdot - \veps \bz) - D^2 \bu(\cdot)}_{L^2(\bbR^d)}\,,
\end{split}
\end{equation*}
and additionally using H\"older's inequality and Tonelli's theorem gives an estimate for the second term of
\begin{equation*}
\begin{split}
\Vnorm{{\bfs \vphi}_{1}^{\veps, (> R)}}_{L^2(\bbR^d)} & \leq \alpha_2^2 \Vnorm{D^2 \bu}_{L^2(\bbR^d)} \intdm{|\bz|>R}{|\bz|^2 \rho(\bz)}{\bz}\,.
\end{split}
\end{equation*}
Let $\tau>0$ be arbitrary. Choose  $R$ sufficiently large that 
\begin{equation*}
 \alpha_2^2 \Vnorm{D^2 \bu}_{L^2(\bbR^d)} \intdm{|\bz|>R}{|\bz|^2 \rho(\bz)}{\bz}\, < \tau.
\end{equation*}
Then using the continuity of the integral with respect to translations and the fact that $\bu \in \big[ C^3(\bbR^d) \big]^d \cap \big[H^3(\bbR^d) \big]^d$ 
\begin{equation*}
\begin{split}
\lim_{\veps\to 0} \Vnorm{{\bfs \vphi}_{1}^{\veps}}_{L^2(\bbR^d)} &\leq \lim_{\veps\to 0} \Vnorm{{\bfs \vphi}_{1}^{\veps, (\leq R)}}_{L^2(\bbR^d)}  + \lim_{\veps\to 0} \Vnorm{{\bfs \vphi}_{1}^{\veps, (> R)}}_{L^2(\bbR^d)} \\
&< \lim_{\veps\to 0}\frac{\alpha_2^2 a_2}{2} \sup_{|\bz|\leq R} \Vnorm{D^2 \bu (\cdot - \veps \bz) - D^2 \bu(\cdot)}_{L^2(\bbR^d)} +\tau \\
&= \tau\,,
\end{split}
\end{equation*}
and thus $\lim\limits_{\veps\to 0} \Vnorm{{\bfs \vphi}_{1}^{\veps}}_{L^2(\bbR^d)} = 0$. To show that $\Vnorm{{\bfs \vphi}_{2}^{\veps}}_{L^2(\bbR^d)} \to 0$ we use Minkowski's inequality to estimate it as 
\begin{equation*}
\begin{split}
	&\Vnorm{{\bfs \vphi}_{2}^{\veps}}_{L^2(\bbR^d)} \\
	&\leq  \veps \alpha_{2}^{2}\left[ \int_{\bbR^{d}}\left|\int_{\bbR^d} \rho(\bz)  \Big\langle \mathfrak{A} \Big( \frac{\bx}{\veps}- \bz \Big)\int_{0}^{1} D^{3}{\bf u}(\bx-\veps t \bz)\bz\otimes \bz (1-t) \, \rmd t, {\bz\over |\bz|} \Big\rangle \frac{\bz}{|\bz|} \mathrm{d}\bz\right|^{2} \rmd\bx\right]^{1/2}\\
		&\leq \veps \alpha_2^2 \left( \int_{\bbR^d} \left| \int_{\bbR^d} \rho \left( \bz \right) |\bz|^2 \left| \mathfrak{A} \left( \frac{\bx}{\veps} - \bz \right) \right| \intdmt{0}{1}{ \left|D^3 \bu (\bx-\veps t \bz) \right|  (1-t)}{t} \, \rmd \bz \right|^2 \, \rmd \bx \right)^{1/2} \\
	&\leq \veps \alpha_2^2 \int_{\bbR^d} \left( \intdmt{0}{1}{ \intdm{\bbR^d}{ \rho^2(\bz) |\bz|^4 \left| \mathfrak{A} \left( \frac{\bx}{\veps} - \bz \right) \right|^2 \left| D^3 \bu (\bx-\veps t \bz) \right|^2 }{\bx}  (1-t)^2}{t} \right)^{1/2} \, \rmd \bz \\
	&\leq \veps \alpha_2^2 \sup_{\bz, \bq \in \bbR^d} \Vnorm{ \Big| \mathfrak{A} \left( \frac{\cdot}{\veps} - \bz \right) \Big| \big| D^3 \bu (\cdot-\veps \bz + \bq) \big|}_{L^2(\bbR^d)} \intdm{\bbR^d}{|\bz|^2 \rho(\bz)}{\bz} \sqrt{ \intdmt{0}{1}{(1-t)^2}{t} } \\
	&= \veps  \frac{\alpha_2^2 a_2}{\sqrt{3}} \sup_{\bz, \bq \in \bbR^d} \Vnorm{\left| \mathfrak{A} \left( \frac{\cdot}{\veps} - \bz \right) \right| \, \left| D^3 \bu (\cdot-\veps \bz + \bq) \right|}_{L^2(\bbR^d)}\,\\
	&=\veps  \frac{\alpha_2^2 a_2}{\sqrt{3}} \sup_{\bq \in \bbR^d} \Vnorm{\left| \mathfrak{A} \left( \frac{\cdot}{\veps} \right) \right| \, \left| D^3 \bu (\cdot + \bq) \right|}_{L^2(\bbR^d)}
\end{split}
\end{equation*}
where we have made the change of variables $\by = \bx - \veps \bz$ in the last equality. 
The convergence of $\Vnorm{{\bfs \vphi}_{2}^{\veps}}_{L^2(\bbR^d)}$ to $0$ is therefore assured so long as
\begin{equation*}
 \sup_{\bq \in \bbR^d} \Vnorm{\left| \mathfrak{A} \left( \frac{\cdot}{\veps}  \right) \right| \, \left| D^3 \bu (\cdot + \bq) \right|}_{L^2(\bbR^d)} \text{ is bounded uniformly in } \veps\,,
\end{equation*}
which indeed holds as a result of Lemma \ref{lem:darboux}. 
The final quantity ${\bfs \vphi}_{3}^{\veps}$ converges to $0$ by similar reasoning;
\begin{equation*}
\begin{split}
	\Vnorm{{\bfs \vphi}_{3}^{\veps}}_{L^2(\bbR^d)} 
	&\leq \veps \alpha_2^2 \left( \int_{\bbR^d} \left| \int_{\bbR^d} \rho \left( \bz \right)  \left| \mathfrak{B} \left( \frac{\bx}{\veps} - \bz \right) \right| \intdmt{0}{1}{ \left|D^3 \bu (\bx-\veps t \bz) \right|  \, |\bz|}{t} \, \rmd \bz \right|^2 \, \rmd \bx \right)^{1/2} \\
	&\leq \veps \alpha_2^2 \int_{\bbR^d} \left( \intdmt{0}{1}{ \intdm{\bbR^d}{ \rho^2(\bz) |\bz|^2 \left| \mathfrak{B} \left( \frac{\bx}{\veps} - \bz \right) \right|^2 \left| D^3 \bu (\bx-\veps t \bz) \right|^2 }{\bx}}{t} \right)^{1/2} \, \rmd \bz \\
	&\leq \veps \alpha_2^2 \sup_{\bz, \bq \in \bbR^d} \Vnorm{\left| \mathfrak{B} \left( \frac{\cdot}{\veps} - \bz \right) \right| \, \left| D^3 \bu (\cdot-\veps \bz + \bq) \right|}_{L^2(\bbR^d)} \intdm{\bbR^d}{|\bz| \rho(\bz)}{\bz} \\
	&\leq \veps \alpha_2^2 \sqrt{a_1 a_2} \sup_{\bz, \bq \in \bbR^d} \Vnorm{\left| \mathfrak{B} \left( \frac{\cdot}{\veps} - \bz \right) \right| \, \left| D^3 \bu (\cdot-\veps \bz + \bq) \right|}_{L^2(\bbR^d)}\,,
\end{split}
\end{equation*}
where H\"older's inequality was used in the final step. Now we proceed exactly as we did for ${\bfs \vphi}_{2}^{\veps}$, using Lemma \ref{lem:darboux} to demonstrate that the latter quantity goes to $0$ as $\veps\to 0$. 
\end{proof}

\subsection{Proof of main theorem}
In this subsection we use Lemma \ref{lemma-CP} and Proposition \ref{lma:MainLemma} to prove the main result of the paper, Theorem \ref{thm:MainThm}. The proof follows the program laid out in \cite{Piatnitski-Zhizhina} adjusted to our setting. Assume that $\rho, {\mu}$ and $\lambda$ satisfy \eqref{eq:A1}, \eqref{eq:A2} and \eqref{eq:A3}. 
Let $\mathfrak{A} = (a^{ijk}) \in \big[ L^2(\bbT^d) \big]^{d^3}$,  $\mathfrak{B} = (b^{ijk \ell}) \in \big[ L^2(\bbT^d) \big]^{d^4}$, and an elasticity tensor $\mathfrak{C}$ are as given in Lemma \ref{lemma-CP}. We prove Theorem \ref{thm:MainThm} in three steps. In \textbf{Step 1} and \textbf{Step 2} we prove Theorem \ref{thm:MainThm} for $\bff \in \big[ \cS(\bbR^d) \big]^d$, the space of Schwartz functions. \textbf{Step 3} extends this result to a general $\bff \in \big[ L^2(\bbR^d) \big]^d$.

{\bf Step 1.} For  $\bff \in \big[ \cS(\bbR^d) \big]^d$, define $\bu^0$ by \eqref{eq:u0Def}. 
Then since $\bbL^0$ is a second-order strongly elliptic operator with smooth coefficients, the solution $\bu^0$ will also be smooth and at least 
$\bu^0 \in \big[ C^3(\bbR^d) \big]^d\cap \big[ H^3(\bbR^d) \big]^d$. Thus the assumptions of Proposition \ref{lma:MainLemma} are satisfied, and we define the perturbation $\bv^{\veps}$ of $\bu^0$ by the ansatz
\begin{equation}\label{eq:AnsatzMainThm}
\bv^{\veps}(\bx) = \bu^0(\bx) + \veps \mathfrak{A}\left(\frac{\bx}{\veps} \right) D{\bf u}^0(\bx) + \veps^2  \mathfrak{B} \left(\frac{\bx}{\veps} \right) D^{2}\bu^0(\bx)\,.
\end{equation}
Then  as $\veps \to 0$ we have  
\begin{equation}\label{eq:AnsatzProof0}
\Vnorm{\bu^0 - \bv^{\veps}}_{L^2(\bbR^d)} \to 0\,.
\end{equation}
This follows from the application of Lemma \ref{lem:darboux} that guarantees 
\begin{equation}\label{eq:AnsatzProof1}
\Vnorm{ \left| \mathfrak{A} \left(\frac{\cdot}{\veps} \right) \right| \, |\grad \bu^0(\cdot)|}_{L^2(\bbR^d)} \qquad \text{ and } \qquad \Vnorm{\left| \mathfrak{B} \left(\frac{\cdot}{\veps} \right) \right| \, \left|  \grad^2 \bu^0(\cdot) \right|}_{L^2(\bbR^d)}
\end{equation}
are bounded uniformly in $\veps$.

{\bf Step 2.} 
Let $\bff \in \big[ \cS(\bbR^d) \big]^d$. Define $\bu^{\veps} = (m\bbI - \bbL^\veps)^{-1}\bff$. For $\bv^{\veps}$ given by \eqref{eq:AnsatzMainThm} it follows that 
\begin{equation*}
\Vnorm{\bu^{\veps} - \bv^{\veps}}_{L^2(\bbR^d)} \to 0
\end{equation*}
as $\veps \to 0$.
To prove this, we begin by noting that by Proposition \ref{lma:MainLemma},
\begin{equation*}
\bbL^{\veps} \bv^{\veps} = \bbL^0 \bu^0 + {\bfs \varphi}_{\veps}\,, \qquad \Vnorm{{\bfs \varphi}_{\veps}}_{L^2(\bbR^d)} \rarrowop_{\veps \to 0} 0\,.
\end{equation*}
Then we have that 
\begin{equation*}
(\bbL^{\veps} - m \bbI)\bv^{\veps} + m (\bv^{\veps} - \bu^0) = (\bbL^0 - m \bbI)\bu^0 + {\bfs \varphi}_{\veps}\,, \qquad \Vnorm{{\bfs \varphi}_{\veps}}_{L^2(\bbR^d)} \rarrowop_{\veps \to 0} 0\,.
\end{equation*}
By \eqref{eq:AnsatzProof0}, the quantity $m \Vnorm{\bv^{\veps} - \bu^0}_{L^2(\bbR^d)} \to 0$ as $\veps \to 0$, so
\begin{equation}\label{eq:MainThmSchFxn1}
\begin{split}
(\bbL^{\veps} - m \bbI)\bv^{\veps} &= (\bbL^0 - m \bbI)\bu^0 + \widetilde{{\bfs \varphi}}_{\veps} \\
	&= \bff + \widetilde{{\bfs \varphi}}_{\veps}\,,
\qquad \qquad \text{ where } \Vnorm{\widetilde{{\bfs \vphi}}_{\veps}}_{L^2(\bbR^d)} \rarrowop_{\veps \to 0} 0\,.
\end{split}
\end{equation}
We therefore use \eqref{eq:MainThmSchFxn1} to obtain the equation 
\begin{equation*}
\bu^{\veps} = (m \bbI - \bbL^{\veps})^{-1}\bff = (m \bbI - \bbL^{\veps})^{-1} \Big( (m\bbI-\bbL^{\veps})\bv^{\veps} - \widetilde{{\bfs \varphi}}_{\veps} \Big) = \bv^{\veps} - (m \bbI - \bbL^{\veps})^{-1} \widetilde{{\bfs \varphi}}_{\veps}\,.
\end{equation*}
By \eqref{eq:AprioriEst}
\begin{equation*}
\sup_{\veps>0}\Vnorm{(m \bbI - \bbL^{\veps})^{-1}}_{\cL( [L^2(\bbR^d)]^d \, , \, [L^2(\bbR^d)]^d )} \leq \widetilde{C}\,,
\end{equation*}
and therefore
\begin{equation*}
\Vnorm{\bu^{\veps}-\bv^{\veps}}_{L^2(\bbR^d)} = \Vnorm{(m \bbI - \bbL^{\veps})^{-1} \widetilde{{\bfs \varphi}}_{\veps}}_{L^2(\bbR^d)} \leq \widetilde{C} \Vnorm{\widetilde{{\bfs \varphi}}_{\veps}}_{L^2(\bbR^d)} \rarrowop_{\veps \to 0} 0\,.
\end{equation*}
Combining this with \eqref{eq:AnsatzProof0} gives
\begin{equation}\label{eqn:MainThmSchFxn}
\Vnorm{\bu^{\veps}-\bu^0}_{L^2(\bbR^d)} \to 0 \qquad \text{ as } \veps \to 0
\end{equation}
for  every $\bff \in \big[ \cS(\bbR^d) \big]^d$.

{\bf Step 3.}
Finally, let $\bff \in \big[ L^2(\bbR^d) \big]^d$ and $\bu^{\veps} = (m\bbI-\bbL^{\veps})^{-1}\bff$. Then for any $\delta > 0$ there exists $\bff_{\delta} \in \big[ \cS(\bbR^d) \big]^d$ such that $\Vnorm{\bff - \bff_{\delta}}_{L^2(\bbR^d)} < \delta$. Since $(m\bbI - \bbL^{\veps})^{-1}$ is bounded uniformly in $\veps$, we have for $\bu_\delta^{\veps} := (m\bbI - \bbL^\veps)^{-1}\bff_\delta$ and $\bu^0_{\delta} := (m\bbI-\bbL^0)^{-1}\bff_{\delta}$ that
\begin{equation}\label{eq:MainThmL2Fxn}
\Vnorm{\bu^{\veps}_{\delta} - \bu^{\veps}} \leq \widetilde{C} \delta \quad \text{ and } \quad \Vnorm{\bu^{0}_{\delta} - \bu^{0}} \leq \widetilde{C} \delta\,.
\end{equation}
Since $\Vnorm{\bu^{\veps}_{\delta}-\bu^0_{\delta}}_{L^2(\bbR^d)} \to 0$ as  $\veps \to 0$ by \eqref{eqn:MainThmSchFxn}, it follows from  \eqref{eq:MainThmL2Fxn} that
\begin{equation*}
\limsup_{\veps \to 0} \Vnorm{\bu^{\veps} - \bu^0}_{L^2(\bbR^d)} \leq 2 \widetilde{C} \delta
\end{equation*}
for arbitrary $\delta > 0$. Therefore $\lim\limits_{\veps \to 0} \Vnorm{\bu^{\veps} - \bu^0}_{L^2(\bbR^d)} = 0$.

\section{Solvability of the auxiliary system of equations}\label{aux}
In this section, we prove Lemma \ref{lemma-CP}. 
That is, under the assumption that  $\rho, {\mu_s}$ and $\lambda$ satisfy \eqref{eq:A1}, \eqref{eq:A2} and \eqref{eq:A3}, 
we demonstrate that there exists a third-order tensor $\mathfrak{A} = (a^{ijk}) \in \big[ L^2(\bbT^d) \big]^{d^3}$, a fourth-order tensor $\mathfrak{B} = (b^{ijk \ell}) \in \big[ L^2(\bbT^d) \big]^{d^4}$, and an elasticity tensor $\mathfrak{C}(\bx)$  such that 
\begin{equation}\label{3rd-order}
\Psi_a(\bbM, {\bf q})={\bfs 0},\quad \forall \, \bbM\in\bbR^{d}\,, \quad \forall \, {\bf q}\in \bbT^{d}
\end{equation}
and
\begin{equation}\label{4th-order}
\lambda(\bx, {\bf q}) \Phi_b(\mathfrak{M}, {\bf q}) := \mathfrak{C}(\bx)\mathfrak{M}\quad \forall \, \mathfrak{M}\in \bbR^{d^3}\,,
\end{equation}
where the maps $\Psi_a$ and $\Phi_b$ are given by \eqref{psi} and $\eqref{phi}$ respectively.  
Notice that from their definition for a fixed ${\bf q}\in \bbT^{d}$, both $\Psi_a(\cdot, {\bf q})$ and $\Phi_b(\cdot, {\bf q})$ are linear maps in their respective domains. Moreover, since  $\Psi_a(0, {\bf q})=\Phi_b(\mathfrak{0}, {\bf q})=0$, there exist a third-order tensor $\mathfrak{K}({\bf q})$ and a fourth-order tensor $\widetilde{\mathfrak{C}}(\bx, {\bf q})$ such that  
\begin{equation}\label{eq:ContractionFix}
\begin{split}
\Psi_a(\bbM, {\bq}) &= \mathfrak{K}({\bf q})\bbM,\quad \forall \, \bbM\in \bbR^{d^2},\quad \text{and }\\
\lambda(\bx, {\bf q}) \Phi_b(\mathfrak{M}, {\bf q}) &= \widetilde{\mathfrak{C}}({\bx, \bf q}) \mathfrak{M},\quad \forall \, \mathfrak{M}\in \bbR^{d^3}. 
\end{split}
\end{equation}
Thus, proving \eqref{3rd-order} is equivalent to showing that $ \mathfrak{K}({\bf q})=0$ for all $\bq\in \bbT^{d}$, and proving \eqref{4th-order} is equivalent to choosing $\mathfrak{B}$ appropriately so that $\widetilde{\mathfrak{C}}({\bx, \bf q})$
is independent of ${\bq}$. 

\subsection{Existence of the third-order tensor}
We begin by explicitly writing a formula for the third-order tensor $\mathfrak{K}=(\mathfrak{k}^{ikl})$ defined in \eqref{eq:ContractionFix} that is associated to a given periodic third-order tensor $\mathfrak{A} = (a^{ijk}) \in L^{2}(\bbT^{d})$. A straightforward computation using \eqref{eq:ContractionFix} and \eqref{psi} reveals 
\begin{equation*}
\mathfrak{k}^{i k \ell} (\bq) = \intdm{\bbR^d}{\rho(\bz) {\mu_s}(\bq, \bq-\bz) \left\lbrace - z_k z_{\ell} + z_j \left( a^{jk \ell} \left( \bq - \bz \right) - a^{jk \ell} \left( \bq \right) \right)  \right\rbrace \frac{z_i}{|\bz|^2}}{\bz}\,,
\end{equation*}
which is well-defined for almost all $\bq \in \bbT^{d}$; this can easily be verified by following the same arguments as in the proof of Proposition \ref{cpt-K}.
To find $\mathfrak{A} = (a^{ijk}) \in L^{2}(\bbT^{d})$ such that \eqref{3rd-order} holds we solve  
\begin{equation}\label{eq:PreCellProb}
\intdm{\bbR^d}{\rho(\bz) {\mu_s}(\bq, \bq-\bz) \left\lbrace - z_k z_{\ell} + z_j \left( a^{jk \ell} \left( \bq - \bz \right) - a^{jk \ell} \left( \bq \right) \right)  \right\rbrace \frac{z_i}{|\bz|^2}}{\bz} = 0\,, \quad\forall \, \bq\in \mathbb{T}^{d}\,. 
\end{equation}
Now, for each $k$, $\ell$ define the vector field $\ba^{k \ell} : \bbR^d \to \bbR^d$ by $(\ba^{k \ell}(\bq))_i := a^{i k \ell}(\bq)$. Making the change of variables $\by = \bq - \bz$ and writing \eqref{eq:PreCellProb} in vector form by eliminating the index $i$, we arrive at the strongly-coupled system of equations
\begin{equation}\label{eq:CellProblem}
\begin{split}
\intdm{\bbR^d}{\rho(\bq-\by) {\mu_s}(\bq, \by) \bigg( &\frac{(\bq-\by) \otimes (\bq-\by)}{|\bq-\by|^2} \big( \ba^{k \ell}(\by) - \ba^{k \ell}(\bq) \big) \\
	&- \frac{(q_k - y_k)(q_{\ell}-y_{\ell})}{|\bq - \by|^2} (\bq-\by)\bigg)}{\by} = {\bfs 0}\,,
\end{split}
\end{equation}
Demonstrating existence of a vector field $\ba^{k \ell} \in \big[ L^2(\bbT^d) \big]^d$ satisfying \eqref{eq:CellProblem} for almost all $\bq\in \bbT^{d}$ and for each $k$, $\ell$ will imply \eqref{3rd-order}. To this end, using the operators we have defined in \eqref{eq:MatricesInOperator}, we can rewrite \eqref{eq:CellProblem} as
\begin{equation}\label{eq:CellProblem-operatorform}
(\bbK-\bbG)\ba^{k \ell}=\bh^{k \ell}\,, \qquad \text{on $\bbT^{d}$ for }1 \leq k, \ell \leq d\,,
\end{equation}
where we introduced the function 
\begin{equation}\label{eq:CellProbData}
\bh^{k \ell}(\bq) := \intdm{\bbR^d}{ \rho(\bq-\by) {\mu_s}(\bq, \by) \frac{(q_k - y_k)(q_{\ell}-y_{\ell})}{|\bq - \by|^2} (\bq-\by)}{\by}\,, \qquad \bq \in \bbT^{d}\,.
\end{equation}
Observe that $\bh^{k \ell} \in \big[ L^2(\bbT^{d}) \big]^d$, since ${\mu_s}$ is periodic in both variables, and  $\bh^{k \ell}(\bq)$ is in fact bounded for all $\bq \in \bbT^d$ since by H\"older's inequality 
\begin{equation*}
|\bh^{k \ell}(\bq)| \leq \alpha_2 \intdm{\bbR^d}{\rho(\bq-\by) |\bq-\by|}{\by} \leq \alpha_2 \,\sqrt{a_1 \,a_2} < \infty\,.
\end{equation*}
To prove existence of $\ba^{k \ell} \in \big[ L^2(\bbT^d) \big]^d$ satisfying \eqref{eq:CellProblem-operatorform}, we apply the Fredholm Alternative Theorem. Indeed,  
we have shown in Proposition \ref{cpt-K} that the operator $\bbK: \big[ L^2(\bbT^d) \big]^d \to \big[ L^2(\bbT^d) \big]^d$  is a compact operator, and in Proposition \ref{PD-G} that $\bbG$ is a positive invertible operator. 
By the Fredholm Alternative Theorem, the equation $(\bbG-\bbK)\ba^{k \ell}=-\bh^{k \ell}$ is solvable in $\bbL^{2}(\bbT^{d})$ if and only if $\bh^{k\ell}$ is orthogonal to  all elements of the kernel of the adjoint operator $[\bbK-\bbG]^\ast$. 

\begin{proposition}\label{prop:KerOfK-G}
$\bbK-\bbG$ is self-adjoint and its kernel is the set of constant vector fields.
\end{proposition}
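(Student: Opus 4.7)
The plan is to prove self-adjointness first by a symmetrization of the integral kernel, and then characterize the kernel through the induced quadratic form together with the non-degeneracy assumption \eqref{eq:A2} on $\rho$.

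\emph{Self-adjointness.} Since the matrix $\boldsymbol{\rmG}(\bq)$ is symmetric (each $\boldsymbol{\rmK}(\bz)$ is symmetric), $\bbG$ is clearly self-adjoint as a multiplication operator on $[L^2(\bbT^d)]^d$. For $\bbK$, I would write
\begin{equation*}
\Vint{\bbK\bu,\bv}_{L^2(\bbT^d)} = \iintdm{\bbT^d}{\bbR^d}{\mu_s(\bq,\by)\,\rmK^{ij}(\bq-\by)\,u^j(\by)v^i(\bq)}{\by}{\bq},
\end{equation*}
split $\mu_s(\bq,\by) = \tfrac{1}{2}\mu(\bq) + \tfrac{1}{2}\mu(\by)$, and apply Lemma \ref{lem:even-odd} in each of the two resulting integrals (this is permitted because $\rmK^{ij}(\bz)=\rho(\bz)z_iz_j/|\bz|^2$ is even in $\bz$ and lies in $L^1(\bbR^d)$). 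After the swap, recombining the two pieces and using the pointwise symmetry $\rmK^{ij}=\rmK^{ji}$ yields $\Vint{\bbK\bu,\bv} = \Vint{\bu,\bbK\bv}$, hence $\bbK - \bbG$ is self-adjoint.

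\emph{Quadratic form and inclusion of constants in the kernel.} Constants clearly lie in the kernel since $(\bbK-\bbG)\bc(\bq) = \int_{\bbR^d}\rmK(\bq-\by)\mu_s(\bq,\by)(\bc-\bc)\,\rmd\by = \bzero$. For the converse, by the same symmetrization argument used in the proof of Lemma \ref{exist-resolvent}, I would derive the identity
\begin{equation*}
\Vint{(\bbK-\bbG)\bu,\bu}_{L^2(\bbT^d)} = -\tfrac{1}{2}\iintdm{\bbT^d}{\bbR^d}{\mu_s(\bq,\by)\rho(\bq-\by)\frac{|(\bq-\by)\cdot(\bu(\by)-\bu(\bq))|^2}{|\bq-\by|^2}}{\by}{\bq}.
\end{equation*}
If $(\bbK-\bbG)\bu=\bzero$, this quadratic form vanishes, and since $\mu_s\geq \alpha_1 > 0$ we conclude that $(\bq-\by)\cdot(\bu(\by)-\bu(\bq))=0$ for a.e.\ pair $(\bq,\by)\in\bbT^d\times\bbR^d$ with $\bq-\by\in\supp\rho$. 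Substituting $\bz=\bq-\by$ and invoking \eqref{eq:A2}, this says: for a.e.\ $\bz\in \Lambda\cap B_{\delta_0}(\bzero)$, the identity $\bz\cdot\bu(\bq-\bz)=\bz\cdot\bu(\bq)$ holds for a.e.\ $\bq\in\bbT^d$.

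\emph{From ``only translates along cone directions are trivial'' to constancy.} This is the main obstacle, and I would overcome it by a Fourier series argument. Expanding $\bu(\bq)=\sum_{\bk\in\bbZ^d}\widehat{\bu}(\bk)e^{2\pi i \bk\cdot\bq}$, the identity above forces
\begin{equation*}
\big(\bz\cdot\widehat{\bu}(\bk)\big)\big(e^{-2\pi i\bk\cdot\bz}-1\big)=0
\end{equation*}
for every $\bk\in\bbZ^d$ and a.e.\ $\bz\in\Lambda\cap B_{\delta_0}(\bzero)$. For each fixed $\bk\neq\bzero$, the factor $e^{-2\pi i\bk\cdot\bz}-1$ vanishes only on the null set $\{\bz : \bk\cdot\bz\in\bbZ\}$ intersected with $B_{\delta_0}$, which has measure zero. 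Consequently $\bz\cdot\widehat{\bu}(\bk)=0$ on a set of positive measure in $\bbR^d$; since $\bz\mapsto \bz\cdot\widehat{\bu}(\bk)$ is linear, its zero set is either a hyperplane or all of $\bbR^d$, and positive $d$-dimensional measure forces $\widehat{\bu}(\bk)=\bzero$. Hence $\widehat{\bu}(\bk)=\bzero$ for every $\bk\neq\bzero$, i.e.\ $\bu$ is constant, completing the characterization $\ker(\bbK-\bbG)=\{\text{constant vector fields}\}$.
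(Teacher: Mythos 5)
Your proof is correct, but the second half takes a genuinely different route from the paper. The self-adjointness and the quadratic-form identity, together with the conclusion that $\big(\bu(\bq)-\bu(\by)\big)\cdot(\bq-\by)=0$ a.e.\ where $\rho(\bq-\by)>0$, coincide with the paper's argument. From that point the paper proceeds geometrically: using the cone condition \eqref{eq:A2} it first shows (Step 1) that on small sets built from translated cones the function must be an infinitesimal rigid displacement $\boldsymbol{\rmQ}\bq+\bm$ with $\boldsymbol{\rmQ}$ skew-symmetric, then patches these local representations together by a chaining argument, and finally (Step 2) uses periodicity and a basis of directions in the cone to force $\boldsymbol{\rmQ}={\bf 0}$. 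You instead pass to Fourier series: the vanishing of $\bz\cdot\big(\bu(\cdot-\bz)-\bu(\cdot)\big)$ in $L^2(\bbT^d)$ for a positive-measure set of shifts $\bz$ forces $\big(\bz\cdot\widehat{\bu}(\bk)\big)\big(e^{-2\pi i\bk\cdot\bz}-1\big)=0$ for all $\bk$, and since for $\bk\neq{\bf 0}$ both exceptional sets $\{\bk\cdot\bz\in\bbZ\}$ and $\{\bz\cdot\widehat{\bu}(\bk)=0\}$ (if $\widehat{\bu}(\bk)\neq{\bf 0}$) are Lebesgue-null, every nonzero mode must vanish. This is shorter, kills both the "rigid motion" and the "skew part" issues in one stroke, and in fact uses less than the paper: you only need $|\{\rho>0\}|>0$ (guaranteed by $a_1>0$), not the cone structure in \eqref{eq:A2}, which the paper reserves for the ellipticity of $\widetilde{\cC}$ and exploits again here. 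One small imprecision: vanishing of the quadratic form yields the pointwise identity a.e.\ on $\{\rho>0\}$, not necessarily a.e.\ on $\Lambda\cap B_{\delta_0}({\bf 0})\subset\supp\rho$, since $\rho$ could vanish on a positive-measure subset of its support; this is harmless for you (replace the cone by $\{\rho>0\}$ and nothing changes), whereas the paper's geometric argument genuinely leans on the cone, and the same looseness appears in its phrasing as well. The paper's approach has the virtue of being transportable to settings where Fourier analysis on the torus is unavailable, but for the proposition as stated your argument is complete and more economical.
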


Assuming that this proposition is true for now,  the solvability of $(\bbG-\bbK)\ba^{k \ell}=-\bh^{k \ell}$ 
is equivalent to showing
\begin{equation*}
\intdm{\bbT^d}{\Vint{\bh^{k \ell}(\bq),\bm}}{\bq} = 0\,, \qquad \bm \in \bbR^d\,, \qquad 1 \leq k, \ell \leq d\,.
\end{equation*}
This is indeed the case, using the formula for $\bh^{k \ell}$ above and \eqref{eq:bOddFxn} in Lemma \ref{lem:even-odd}. For each $k$, $\ell$ the choice of $\ba^{k \ell}$ is unique up to addition by a constant vector. In order to fix the choice of $\ba^{k \ell}$ we enforce the condition
\begin{equation}\label{eq:UniquenessCondFora}
\intdm{\bbT^d}{\ba^{k \ell}(\bq)}{\bq} = {\bf 0}\,.
\end{equation}
Note that $\bh^{k \ell} = \bh^{\ell k}$, 
so therefore for each $i$ the matrix $(a^{ik \ell})_{k \ell}$ is symmetric.

\begin{proof}[Proof of Proposition \ref{prop:KerOfK-G}]
For any ${\bfs \psi}$ in  $\big[ L^2(\bbT^d) \big]^d$, we first notice that for $\bq \in \bbT^d$
\begin{equation*}
\begin{split}
(\bbK-\bbG){\bfs \psi} (\bq)&=\intdm{\bbR^d}{\rho(\bq-\by) {\mu_s}(\bq, \by) \frac{(\bq-\by) \otimes (\bq-\by)}{|\bq-\by|^2} \big( {\bfs \psi}(\by) -{\bfs \psi}(\bq) \big)}{\by} \\
&= \intdm{\bbT^d}{\overline{\boldsymbol{\rmK}}(\bq-\by) {\mu_s}(\bq, \by) \big( {\bfs \psi}(\by) - {\bfs \psi}(\bq) \big)}{\by} \,,
\end{split}
\end{equation*}
for $\overline{\boldsymbol{\rmK}}(\bz) := \sum_{\bk \in \bbZ^d} \boldsymbol{\rmK}(\bz+\bk)$ for $ \bz \in \bbT^d\,,$ where we have used the periodicity of ${\bfs \psi}$ and ${\mu_s}$.  The matrix-valued map $\boldsymbol{\rmK}$ is even, making the operator $\bbK-\bbG$ self-adjoint.  Moreover, 
\begin{equation*}
\begin{split}
&\Vint{(\bbK-\bbG) {\bfs \psi}, {\bfs \psi}}_{L^2(\bbT^d)} \\
	&\quad = \iintdm{\bbT^d}{\bbR^d}{\frac{\rho(\bq-\by) {\mu_s}(\bq, \by)}{|\bq-\by|^2} \Big\{ \big( {\bfs \psi}(\bq) - {\bfs \psi}(\by) \big) \cdot (\bq-\by) \big) \Big\} \Big\{ \big( {\bfs \psi}(\bq) \cdot (\bq-\by) \big) \Big\} }{\by}{\bq} \\
	&\quad =\iintdm{\bbT^d}{\bbT^d}{{\mu_s}(\bq, \by)\left\langle {\overline{\boldsymbol{\rmK}}(\bq-\by)  \big( {\bfs \psi}(\by) - {\bfs \psi}(\bq) \big)}, {\bfs \psi}(\bq)\right\rangle}{\by}{\bq}\\
	&\quad ={1\over 2}\iintdm{\bbT^d}{\bbT^d}{{\mu_s}(\bq, \by)\left\langle {\overline{\boldsymbol{\rmK}}(\bq-\by)  \big( {\bfs \psi}(\by) - {\bfs \psi}(\bq) \big)}, {\bfs \psi}(\by)-{\bfs \psi}(\bq)\right\rangle}{\by}{\bq}\\
	&\quad = \frac{1}{2} \iintdm{\bbT^d}{\bbT^d}{{\mu_s}(\bq,\by)\sum_{\bk\in \bbZ^d}\frac{\rho(\bq-\by+\bk) }{|\bq-\by+\bk|^2} \Big( \big( {\bfs \psi}(\bq) - {\bfs \psi}(\by) \big) \cdot (\bq-\by+{\bk}) \Big)^2}{\by}{\bq}\,.
\end{split}
\end{equation*}
One may also rewrite the last equality as 
\begin{equation*}
\Vint{(\bbK-\bbG) {\bfs \psi}, {\bfs \psi}}_{L^2(\bbT^d)} =\frac{1}{2} \iintdm{\bbT^d}{\bbR^d}{{\mu_s}(\bq,\by)\frac{\rho(\bq-\by) }{|\bq-\by|^2} \Big( \big( {\bfs \psi}(\bq) - {\bfs \psi}(\by) \big) \cdot (\bq-\by) \Big)^2}{\by}{\bq}\,.
\end{equation*}
Therefore, if ${\bfs \psi}$ belongs to the kernel of  $(\bbK-\bbG) $, then for almost every $\bq, \by \in \bbT^{d}$  such that $\bq-\by \in \supp \rho$ 
\begin{equation}\label{eq:InfRigidDis}
\big( {\bfs \psi}(\bq) - {\bfs \psi}(\by) \big) \cdot (\bq-\by) = 0\,.
\end{equation}
is satisfied.  We now show that any periodic function that satisfies \eqref{eq:InfRigidDis} must be a constant. We do this in two steps. In the first step we show that ${\bfs \psi}$ must be an infinitesimal rigid map of the form ${\bfs \psi}(\by) = \boldsymbol{\rmQ}\by + \bm$ on $\bbT^d$ where $\boldsymbol{\rmQ}$ is  a skew symmetric matrix and $\bm$ is a vector.  In the second step we show that ${\bfs \rmQ}$ must be the zero matrix. 

{\bf Step 1.} For almost any $\bq$ in the interior of $\bbT^{d}$, set $\delta_{\bq} := \max \left\{ \delta_0\,, \frac{\dist(\bq, \p \bbT^d)}{2} \right\}  >0$. Define $\Gamma(\bq) := \left\{ \by \in B_{\delta_{\bq}}(\bq) \, : \, \frac{\by-\bq}{|\by-\bq|} \in \cJ \cap - \cJ  \right\}$. Then $\Gamma(\bq)$ is an open set compactly contained in $\mathring{\bbT}^d$. In fact, $\Gamma(\bq)$ is the intersection of the ball $B_{\delta_{\bq}}(\bq)$ with the symmetric cone $\Lambda$ centered at $\bq$. Now, let $\eta > 0$ and let $\{ \bw_i \}_{i=1}^d$ denote a basis for $\bbR^d$ contained in $\cJ \cup -\cJ$; such a basis exists since $\cH^{d-1}(\cJ)>0$. Then there exists an $\eta > 0$ small such that $\bq + \eta \bw_i \in \mathring{\bbT}^d$ for every $i$; we will work with $\eta$ in this range from here on. Define for each $i$ the set
$\Gamma(\bq + \eta \bw_i) := \left\{ \by \in B_{\delta_{\bq+\eta \bw_i}}(\bq+\eta \bw_i) \, : \, \frac{\by-\bq - \eta \bw_i}{|\by-\bq - \eta \bw_i|} \in \cJ \cap - \cJ  \right\}$, where $\delta_{\bq + \eta \bw_i} = \frac{\dist(\bq+\eta \bw_i, \p \bbT^d)}{2} > 0$. Then by definition of $\Gamma(\bq)$ and $\Gamma(\bq+\eta \bw_i)$, since the Lebesgue integral is continuous with respect to translations, and since the distance function is continuous, there exists an $\eta_{\bq} > 0$ such that the function
\begin{equation*}
\eta \mapsto \intdm{\bbR^d}{ \chi_{\Gamma(\bq)} \cdot  \chi_{\bbT^d} \cdot \prod_{i=1}^d \chi_{\Gamma(\bq+\eta \bw_i)} }{\bx}
\end{equation*}
is positive. For $\bq \in \mathring{\bbT}^d$ set $\widetilde{\Gamma}(\bq) := \left( \bigcap_{i=1}^d \Gamma(\bq+\eta_{\bq} \bw_i) \right) \cap \Gamma(\bq) \cap \mathring{\bbT}^d$. By the discussion above, $\widetilde{\Gamma}(\bq)$ is an open set of positive Lebesgue measure. 

Now, fix $\bq \in \mathring{\bbT}^d$ (up to a set of measure zero). Then since \eqref{eq:InfRigidDis} holds for almost every $\by \in \widetilde{\Gamma}(\bq) \subset \supp \rho + \bq$, we have
\begin{equation}\label{eq:KernelOfA*:Proof1}
\big( {\bfs \psi}(\bq) - {\bfs \psi}(\by) \big) \cdot (\bq - \by) = 0
\end{equation}
and since $\by \in \Gamma(\bq + \eta_{\bq} \bw_i)$
\begin{equation}\label{eq:KernelOfA*:Proof2}
\big( {\bfs \psi}(\bq+\eta_{\bq} \bw_i)-{\bfs \psi}(\by) \big) \cdot (\bq + \eta_{\bq} \bw_i - \by ) = 0
\end{equation}
for almost every $\by \in \widetilde{\Gamma}(\bq)$.
Therefore, adding and subtracting ${\bfs \psi}(\bq)$ in the first argument of 
\eqref{eq:KernelOfA*:Proof2} and $\bq$ in the second and using 
\eqref{eq:KernelOfA*:Proof1} we see that
$$
\big( {\bfs \psi}(\bq + \eta_{\bq} \bw_i)-{\bfs \psi}(\bq) \big) \cdot (\bq-\by) = - \big( {\bfs \psi}(\bq) - {\bfs \psi}(\by) \big) \cdot \eta_q \bw_i
$$
for almost every $\by \in \widetilde{\Gamma}(\bq)$.
So,
$$
{\bfs \psi}(\by) \cdot \bw_i = \frac{1}{\eta_{\bq}} \Big( \big( {\bfs \psi}(\bq+\eta_{\bq} \bw_i) - {\bfs \psi}(\bq) \big) \cdot (\bq-\by) \Big) + {\bfs \psi}(\bq) \cdot \bw_i
$$
for almost every $\by \in \widetilde{\Gamma}(\bq)$ and for every $i$, so ${\bfs \psi}(\by) \cdot \bw_i$ is clearly a 
linear map.
Then, letting $\boldsymbol{\rmW} = (\bw_i)_i$ be the matrix of basis vectors, and 
${\bfs \psi} = (\psi_1, \psi_2, \ldots, \psi_d)$, we have that 
\begin{align*}
\psi_i(\by) = \left( \boldsymbol{\rmW}^{-1} ( \boldsymbol{\rmW} {\bfs \psi} )\right)_i = \sum_j 
w^{-1}_{ij} ( \bw_j\cdot {\bfs \psi}(\by))
\end{align*}
which, being a sum of of linear maps, is still linear. We conclude that for 
almost all $\by \in \widetilde{\Gamma}(\bq)$ the vector field ${\bfs \psi}(\by)$ is of the form 
$\boldsymbol{\rmQ}_{\bq}\by + \bm_{\bq}$, where $\mathbb{A}$ where $\boldsymbol{\rmQ}_{\bq}$ is a matrix with 
constant entries (depending possibly on $\bq$) and $\bm_{\bq}$ is a constant vector (also depending on $\bq$) in $\bbR^d$. 

Next, given any two points in $\mathring{\bbT}^d$, outside of a set of measure zero, we connect them by  finitely many sets of the form $\widetilde{\Gamma}(\bq)$ i.e.\ for 
any two points $\bq_0$ and $\bp_0$ in $\mathring{\bbT}^d$  there exists a finite chain of $\{ \widetilde{\Gamma}(\bq) \}_{\bq \in \bbT^d}$, denoted 
$\{ \widetilde{\Gamma}(\bq_k) \}_{k=0}^N$, such that $\widetilde{\Gamma}(\bq_k) \cap 
\widetilde{\Gamma}(\bq_{k+1}) \neq \emptyset$ and $\bq_0 \in \widetilde{\Gamma}(\bq_0)$, $\bp_0 \in \widetilde{\Gamma}(\bq_N)$. This is possible, 
since the line segment connecting $\bq_0$ and $\bp_0$ is compact and contained in the convex set $\bbT^d$. Therefore the ${\bfs \psi}$ given above is the same in neighboring intersecting open sets  and so ${\bfs \psi}(\by) 
= \boldsymbol{\rmQ}\by + \bm$ on $\bbT^d$ where $\boldsymbol{\rmQ}$ and $\bm$ are now independent of any point $\bq$ in $\mathring{\bbT}^d$. By \eqref{eq:InfRigidDis}, the matrix $\boldsymbol{\rmQ}$ must be skew symmetric.  
Thus ${\bfs \psi}(\bq)$ has the form of a function belonging to $\cM$ for almost every $\bq \in \bbT^d$.

{\bf Step 2.}  Now we show that $\boldsymbol{\rmQ}$ must be identically zero. Define the translated symmetric cone centered at $\bq$ by $\Lambda(\bq) :=  \left\{ \by \in B_{\delta_0}(\bq) \, : \, \frac{\by-\bq}{|\by-\bq|} \in \cJ \cap - \cJ  \right\}$. Then there exists a $\delta > 0$ such that for $\bq = (1-\delta, 1-\delta, \ldots\, 1-\delta) \in \bbT^d$ the following holds: For each $i \in \{ 1, \ldots, d \}$ the set $B_{\delta_0}(\bq-\be_i) \cap \Lambda(\bq-\be_i) \cap \mathring{\bbT}^d$ is an open nonempty set, and moreover there exists a collection $\{\by^{i,j} \}_{j = 1}^d \subset B_{\delta_0}(\bq-\be_i) \cap \Lambda(\bq-\be_i) \cap \mathring{\bbT}^d$ such that $\{\bq - \by^{i,j} \}_{j = 1}^d$ forms a basis for $\bbR^d$.
Thus, since $\by^{i,j} + \be_i \in \supp \rho + \bq$,
\begin{equation*}
\big( {\bfs \psi}(\bq) - {\bfs \psi}(\by^{i,j}+\be_i) \big) \cdot (\bq-\by^{i,j}-\be_i) = 0\,, \qquad j \in \{ 1, \ldots, d \}\,.
\end{equation*}
Since ${\bfs \psi}$ is periodic,
\begin{equation*}
\big( {\bfs \psi}(\bq) -{\bfs \psi}(\by^{i,j}) \big) \cdot (\bq-\by^{i,j}-\be_i) = 0\,, \qquad j \in \{ 1, \ldots, d \}\,.
\end{equation*}
Now, $\bq$ and $\by^{i,j}$ both belong to $\bbT^d$, so there exists a skew-symmetric matrix $\boldsymbol{\rmQ}$ and a constant vector $\bm \in \bbR^d$ such that ${\bfs \psi}(\bq) = \boldsymbol{\rmQ}\bq + \bm$ in $\bbT^d$. Therefore, since $\boldsymbol{\rmQ}$ is skew-symmetric,
\begin{equation*}
\begin{split}
0 &= \big({\bfs \psi}(\bq) - {\bfs \psi}(\by^{i,j}) \big) \cdot (\bq-\by^{i,j}-\be_i) \\
&= - \big( {\bfs \psi}(\bq) - {\bfs \psi}(\by^{i,j}) \big) \cdot \be_i \\
&= \boldsymbol{\rmQ}(\bq-\by^{i,j}) \cdot \be_i\,.
\end{split}
\end{equation*}
Since $\bq-\by^{i,j}$ is a basis we can write $\bx \in \bbT^d$ as $\sum_{j=1}^d x_j (\bq-\by^{i,j})$, and therefore
\begin{equation*}
\boldsymbol{\rmQ}\bx \cdot \be_i = \sum_{j=1}^d x_j \boldsymbol{\rmQ}(\bq-\by^{i,j}) \cdot \be_i = 0\,,
\end{equation*}
for every $\bx \in \bbT^d$. Since $i$ is arbitrary it follows that $\boldsymbol{\rmQ} \equiv {\bf 0}$.
Therefore ${\bfs \psi}(\bq) \equiv \bm$ for some constant $\bm \in \bbR^d$, completing the proof. 
\end{proof}

\subsection{Existence of the fourth-order tensor}
Now that we have the third-order tensor $\mathfrak{A}$ that will make the map $\Psi_{a}$ the zero map, we will use it to show the existence of a fourth-order tensor $\mathfrak{B}$ that satisfies \eqref{4th-order}.  From the discussion at the beginning of this section, we will find a $\mathfrak{B} = (b^{mjkl}(\bq))$ such that \eqref{4th-order} holds for $\mathfrak{C}(\bx) = (c^{ijkl}(\bx))$ chosen appropriately. In components, we seek $(b^{mjkl}(\bq))$ and $(c^{ijkl}(\bx))$ such that
\begin{equation}\label{eq:PreCellProb2}
\begin{split}
\lambda \left( \bx, \bq \right) \int_{\bbR^d} \rho \left( \bz \right)  {\mu_s} \left(\bq, \bq - \bz \right) & \frac{z_i}{|\bz|^2} \Big\lbrace \frac{1}{2} z_j z_k z_{\ell} - z_j z_m a^{m k \ell}(\bq-\bz) \\
	& + z_m \left( b^{m j k \ell} \left( \bq - \bz \right) - b^{m j k \ell} \left( \bq \right) \right) \Big\rbrace \, \mathrm{d}\bz = c^{ijk \ell} (\bx)\,.
\end{split}
\end{equation}

Now, for each $j$, $k$, $\ell$ define the vector fields $\bb^{j k \ell}$, $\bc^{j k \ell} : \bbR^d \to \bbR^d$ by $(\bb^{j k \ell}(\bq))_i := b^{i j k \ell}(\bq)$ and $(\bc^{j k \ell}(\bq))_i := c^{i j k \ell}(\bq)$.
Making the change of variables $\by = \bq - \bz$ and writing \eqref{eq:PreCellProb2} in vector form by eliminating the index $i$, we arrive at the strongly-coupled system of equations for $\bb^{j k \ell}$
\begin{equation}\label{setup-B}
\begin{split}
&\intdm{\bbR^d}{\rho(\bq-\by) {\mu_s}(\bq,\by) \frac{(\bq-\by) \otimes (\bq-\by)}{|\bq-\by|^2} \big( \bb^{j k \ell}(\by) - \bb^{j k \ell}(\bq) \big)}{\by} \\
	&\qquad = \frac{\bc^{jk \ell}(\bx)}{\lambda(\bx, \bq)} - \frac{1}{2} \intdm{\bbR^d}{ \rho(\bq-\by) {\mu_s}(\bq,\by) \frac{(q_j - y_j)(q_k - y_k)(q_{\ell}-y_{\ell})}{|\bq - \by|^2} (\bq-\by)}{\by} \\
	&\qquad \quad + \intdm{\bbR^d}{\rho(\bq-\by) {\mu_s}(\bq, \by) (q_j-y_j)  \frac{(\bq-\by) \otimes (\bq-\by)}{|\bq-\by|^2} \ba^{k \ell}(\by)}{\by}\,,
\end{split}
\end{equation}
or by using the notation from Section \ref{tools}
\begin{equation}\label{eq:CellProblem2}
(\bbK-\bbG)\bb^{j k \ell}(\bq) = \bg^{j k \ell}(\bx, \bq)\,, \qquad \bq \in \bbT^d\,, \bx \in \bbR^d\,,
\end{equation}
where $\bg^{j k \ell}(\bx, \bq)$ is the expression on the right-hand side of \eqref{setup-B}.
As we have worked previously solving \eqref{eq:CellProblem2} is equivalent to 
\begin{equation}\label{eq:CellProblem2Operator}
\intdm{\bbT^d}{\Vint{\bg^{j k \ell}(\bx, \bq),\bm}}{\bq} = 0\,, \quad \bm \in \bbR^d\,, \quad 1 \leq k, \ell \leq d\,, \quad \bx \in \bbR^d\,.
\end{equation}
By setting $\bm = \be_i$, equation \eqref{eq:CellProblem2Operator} gives the required condition on $ c^{ijk \ell}(\bx)$ 
\begin{equation}\label{FormulaForC2}
\begin{split}
c^{ijk \ell}&(\bx)\intdm{\bbT^d}{\frac{1}{\lambda(\bx, \bq)}}{\bq} \\
	&= \frac{1}{2} \int_{\bbT^d} \intdm{\bbR^d}{ \rho(\bq-\by)  {\mu_s}(\bq,\by) \frac{(q_i-y_i)(q_j - y_j)(q_k - y_k)(q_{\ell}-y_{\ell})}{|\bq - \by|^2}}{\by} \, \rmd \bq \\
	&\qquad - \int_{\bbT^d} \intdm{\bbR^d}{\rho(\bq-\by) {\mu_s}(\bq,\by) \frac{(q_i-y_i)(q_j-y_j)}{|\bq-\by|^2}  (\bq-\by) \cdot \ba^{k \ell}(\by)}{\by} \, \rmd \bq\\
	&=: \widetilde{c}^{ijk \ell}\,.
\end{split}
\end{equation}
Defining the constant fourth-order tensor $\widetilde{\mathcal{C}} = (\widetilde{c}^{ijk \ell} )$ and rewriting \eqref{FormulaForC2}, the equation
\begin{equation*}
\mathfrak{C}(\bx) = \left(\intdm{\bbT^d}{\frac{1}{\lambda(\bx, \bq)}}{\bq}\right)^{-1}\tilde{\mathcal{C}}
\end{equation*}
gives a formula for the fourth-order tensor $\mathfrak{C}(\bx)$ so that  \eqref{eq:CellProblem2} is solvable. This proves the existence of a fourth-order tensor $\mathfrak{C}(\bx)$.

Next, we use this formula \eqref{FormulaForC2} and the symmetry properties of $\mathfrak{A}$  to demonstrate that ${\mathfrak{C}}$ has the symmetries and ellipticity of an elasticity tensor as defined in \eqref{eq:ElasticityTensor1}-\eqref{eq:ElasticityTensor2}. To that end, using \eqref{eq:A3} it suffices to show that the constant tensor $\widetilde{\mathcal{C}}$ satisfies \eqref{eq:ElasticityTensor1} and \eqref{eq:ElasticityTensor2}. The following theorem does exactly that.
\begin{theorem} Let the third-order tensor $\mathfrak{A}$ be given by Lemma \ref{lemma-CP} and $\widetilde{c}^{ijk \ell}$ be given by \eqref{FormulaForC2}. Then
\begin{equation}\label{alternative-for-C}
\begin{split}
\widetilde{c}^{ijk \ell}= \frac{1}{2} \int_{\bbT^d} \int_{\bbR^d} & \frac{\rho(\bq-\by) {\mu_s}(\bq, \by)}{|\bq-\by|^2} \Big( (q_i-y_i)(q_j-y_j) + \big( \ba^{ij}(\bq) - \ba^{ij}(\by) \big) \cdot (\bq-\by) \Big) \\
&\qquad \times \Big( (q_k-y_k)(q_{\ell}-y_{\ell}) + \big( \ba^{k \ell}(\bq) - \ba^{k \ell}(\by) \big) \cdot (\bq-\by) \Big) \, \rmd \by \, \rmd \bq\,.
\end{split}
\end{equation}
Moreover,  $\widetilde{\mathcal{C}}$ is an elasticity tensor satisfying \eqref{eq:ElasticityTensor1} and \eqref{eq:ElasticityTensor2}. 
 \end{theorem}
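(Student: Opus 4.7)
The plan has two parts: first derive the symmetric integral representation \eqref{alternative-for-C} from \eqref{FormulaForC2}, then read off the elasticity properties of $\widetilde{\mathcal{C}}$.

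For the first part, abbreviate $A=(q_i-y_i)(q_j-y_j)$, $B=(\ba^{ij}(\bq)-\ba^{ij}(\by))\cdot(\bq-\by)$, with $C$, $D$ defined analogously using indices $(k,\ell)$. I first apply the oddness identity \eqref{eq:bOddFxn} of Lemma \ref{lem:even-odd} (after splitting $\mu_s=\tfrac12\mu(\bq)+\tfrac12\mu(\by)$) to the odd vector kernel $\rho(\bq-\by)|\bq-\by|^{-2}A(\bq-\by)$ in the second integral of \eqref{FormulaForC2} to reach the half-symmetrized identity $\widetilde{c}^{ijk\ell}=\tfrac12\iint\tfrac{\rho\mu_s}{|\bq-\by|^2}A(C+D)\,d\by\,d\bq$. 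To produce the missing $B(C+D)$ piece I test the cell problem $(\bbK-\bbG)\ba^{k\ell}=\bh^{k\ell}$ against $\ba^{ij}(\bq)$ and integrate in $\bq\in\bbT^d$, obtaining
\begin{equation*}
\iint\tfrac{\rho\mu_s (C+D)}{|\bq-\by|^2}(\bq-\by)\cdot\ba^{ij}(\bq)\,d\by\,d\bq=0.
\end{equation*}
I then convert the factor $(\bq-\by)\cdot\ba^{ij}(\bq)$ into $\tfrac12 B$ inside this identity: the $C$-contribution uses \eqref{eq:bOddFxn} directly (since $C$ is a function of $\bq-\by$ alone), while the $D$-contribution requires a double application of the even-kernel identity \eqref{eq:aEvenFxn} of Lemma \ref{lem:even-odd}, applied to each of the two terms of $D=\ba^{k\ell}(\bq)\cdot(\bq-\by)-\ba^{k\ell}(\by)\cdot(\bq-\by)$, to yield $\iint\tfrac{\rho\mu_s D}{|\bq-\by|^2}(\bq-\by)\cdot\ba^{ij}(\bq)\,d\by\,d\bq=\tfrac12\iint\tfrac{\rho\mu_s DB}{|\bq-\by|^2}\,d\by\,d\bq$. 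Together these show $\iint\tfrac{\rho\mu_s B(C+D)}{|\bq-\by|^2}\,d\by\,d\bq=0$, and adding half of this zero to the half-symmetrized formula yields \eqref{alternative-for-C}.

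With \eqref{alternative-for-C} in hand, the major symmetry $\widetilde{c}^{ijk\ell}=\widetilde{c}^{k\ell ij}$ is manifest, while the minor symmetries $\widetilde{c}^{ijk\ell}=\widetilde{c}^{jik\ell}=\widetilde{c}^{ij\ell k}$ follow from the symmetry $\ba^{ij}=\ba^{ji}$ noted after \eqref{eq:UniquenessCondFora}. For coercivity, given a symmetric matrix $\bbW=(w_{ij})$, set $\ba^{\bbW}:=\sum_{ij}w_{ij}\ba^{ij}\in[L^2(\bbT^d)]^d$; contracting \eqref{alternative-for-C} with $w_{ij}w_{k\ell}$ gives
\begin{equation*}
\widetilde{c}^{ijk\ell}w_{ij}w_{k\ell}=\tfrac12\iint\tfrac{\rho\mu_s}{|\bq-\by|^2}\Big[\bbW(\bq-\by)\cdot(\bq-\by)+(\ba^{\bbW}(\bq)-\ba^{\bbW}(\by))\cdot(\bq-\by)\Big]^2\,d\by\,d\bq\geq 0.
\end{equation*}
For strict positivity when $\bbW\neq 0$, if this vanishes then the bracketed quantity is zero for a.e.\ $(\bq,\by)$ with $\bq-\by\in\supp\rho$; the local rigidity argument of Step 1 of Proposition \ref{prop:KerOfK-G} (which does not use periodicity) then forces $\bbW\bq+\ba^{\bbW}(\bq)=\boldsymbol{\rmQ}\bq+\bm$ on $\bbT^d$ with $\boldsymbol{\rmQ}$ skew and $\bm\in\bbR^d$. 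Periodicity of $\ba^{\bbW}$ forces $\boldsymbol{\rmQ}=\bbW$, and then $\boldsymbol{\rmQ}$ skew versus $\bbW$ symmetric gives $\bbW\equiv 0$. Compactness of the unit sphere in the (finite-dimensional) space of symmetric matrices together with continuity of the nonnegative quadratic form $\bbW\mapsto\widetilde{c}^{ijk\ell}w_{ij}w_{k\ell}$ yields $\gamma_1>0$, and the upper bound $\gamma_2$ is automatic from continuity of a bilinear form on a finite-dimensional space.

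The main obstacle is the sign-bookkeeping in the conversion $(\bq-\by)\cdot\ba^{ij}(\bq)\mapsto\tfrac12 B$ inside the cell-problem identity: since $(C+D)$ is not itself a function of $\bq-\by$ alone, the $D$-contribution requires the double application of the even-kernel identity on the two $\ba^{k\ell}$ factors described above, and the various half-factors arising from splits of $\mu_s$ must line up exactly. Adapting Step 1 of Proposition \ref{prop:KerOfK-G} to the quasi-periodic map $\bq\mapsto\bbW\bq+\ba^{\bbW}(\bq)$ is otherwise routine: the local rigidity argument never used periodicity, which enters only in the final matching $\boldsymbol{\rmQ}=\bbW$.
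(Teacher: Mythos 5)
Your derivation of the alternate formula \eqref{alternative-for-C} is correct and is, at bottom, the same mechanism as the paper's: the paper expands \eqref{alternative-for-C} into four terms, identifies two of them with \eqref{FormulaForC2} via \eqref{eq:bOddFxn}, and shows the remaining two cancel using the cell problem \eqref{eq:CellProblem} together with Lemma \ref{lem:even-odd}; your version (half-symmetrize \eqref{FormulaForC2} by oddness, then test the cell problem against $\ba^{ij}(\bq)$ and symmetrize with the even/odd swaps to see that the $B(C+D)$ block integrates to zero) is just a repackaging of that computation, and your bookkeeping of the half-factors checks out. The symmetry claims \eqref{eq:ElasticityTensor1} are handled as in the paper. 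Two smaller remarks: like the paper, you apply Lemma \ref{lem:even-odd} to corrector components that are only in $L^2(\bbT^d)$ (and, for the term where both corrector factors sit at $\bq$, to their product in $L^1(\bbT^d)$), which needs the routine extension of the lemma beyond bounded functions; and the upper bound $\gamma_2$ is not ``automatic from finite dimensionality'' --- it requires knowing the entries $\widetilde{c}^{ijk\ell}$ are finite, which the paper proves by the absolute-convergence estimate on \eqref{FormulaForC2} using $\ba^{k\ell}\in\big[L^2(\bbT^d)\big]^d$.

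The genuine gap is in your coercivity argument, at the step ``periodicity of $\ba^{\bbW}$ forces $\boldsymbol{\rmQ}=\bbW$.'' Your rigidity step only yields $\bbW\bq+\ba^{\bbW}(\bq)=\boldsymbol{\rmQ}\bq+\bm$ for a.e.\ $\bq$ in the single cell $\bbT^d$, and knowing that a periodic function agrees with an affine function on one period cell puts no constraint whatsoever on the linear part: any affine function on $[0,1)^d$ extends periodically. So the stated inference fails; the information that pins down the linear part is the vanishing of the bracket for pairs $(\bq,\by)$ with $\by$ \emph{outside} the cell $\bq$ lives in (the $\by$-integral in \eqref{alternative-for-C} runs over $\bbR^d$), i.e.\ precisely the content of Step 2 of Proposition \ref{prop:KerOfK-G}, which you have skipped rather than adapted. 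The repair is available but must be made explicit: either run a Step-2-type argument for $\bfs\psi(\bq)=\bbW\bq+\ba^{\bbW}(\bq)$ using points $\by^{i,j}+\be_i$ and the periodicity of $\ba^{\bbW}$, or first observe that, by periodicity of $\ba^{\bbW}$ and translation invariance of the remaining factors, the pointwise identity holds for a.e.\ $(\bq,\by)\in\bbR^d\times\bbR^d$ with $\bq-\by\in\supp\rho$, run the chaining argument of Step 1 on arbitrarily large cubes to get a single affine representation of $\bfs\psi$ on all of $\bbR^d$, and only then conclude that $\ba^{\bbW}$, being affine and periodic on $\bbR^d$, is constant, so $\bbW=\boldsymbol{\rmQ}$ is both symmetric and skew, hence zero. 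Note also that the paper's own coercivity proof avoids rigidity of the corrector-augmented field entirely: after expanding the square, the cross term vanishes by periodicity, the corrector square is discarded, and positivity of $\int_{\bbR^d}\rho(\bz)|\bz|^{-2}\,\big|\Vint{\bbW\bz,\bz}\big|^2\,\rmd\bz$ on the unit sphere of symmetric matrices is proved by the convex-cone basis and polarization argument together with compactness; that route is shorter than the rigidity route you propose, and requires no analogue of Step 2.
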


\begin{proof}Assume first the validity of the alternate expression for $\widetilde{c}^{ijk \ell}$ given in \eqref{alternative-for-C}. We will prove that $\widetilde{\mathcal{C}}$ is an elasticity tensor satisfying \eqref{eq:ElasticityTensor1} and \eqref{eq:ElasticityTensor2}. The symmetries \eqref{eq:ElasticityTensor1} of  $\widetilde{\mathcal{C}}$ follow from the definition \eqref{alternative-for-C} and the symmetry $\ba^{ij} = \ba^{ji}$ demonstrated in Lemma \ref{lemma-CP}.  Next we show the ellipticity \eqref{eq:ElasticityTensor2}. 
For any symmetric matrix $\boldsymbol{\bbW} = (w^{ij})$, a straightforward calculation shows that
\begin{equation*}
\begin{split}
%\Vint{\widetilde{\mathcal{C}}\boldsymbol{\bbW},\boldsymbol{\bbW}} &= \frac{1}{2} \iintdm{\bbT^d}{\bbR^d}{\frac{\rho(\bq-\by)  {\mu_s}(\bq,\by)}{|\bq-\by|^2} \big( w^{ij}(q_i-y_i)(q_j-y_j) + w^{ij} \big( \ba^{ij}(\bq) - \ba^{ij}(\by) \big) \cdot (\bq-\by) \big)^2}{\by}{\bq} \\
	\big\langle \widetilde{\mathcal{C}}\boldsymbol{\bbW},\boldsymbol{\bbW} \big\rangle &= \iintdm{\bbT^d}{\bbR^d}{\frac{\rho(\bz)  {\mu_s}(\bq,\bq-\bz)}{2 |\bz|^2} \big( w^{ij}z_i z_j  + w^{ij} \big( \ba^{ij}(\bq) - \ba^{ij}(\bq-\bz) \big) \cdot (\bz) \big)^2}{\bz}{\bq} \\
	&\geq \frac{\alpha_1}{2} \iintdm{\bbT^d}{\bbR^d}{\frac{\rho(\bz)}{|\bz|^2} \left( w^{ij}z_i z_j + w^{ij} \big( \ba^{ij}(\bq) - \ba^{ij}(\bq-\bz) \big) \cdot \bz \right)^2  }{\bz}{\bq} \\
	&= \frac{\alpha_1}{2} \iintdm{\bbT^d}{\bbR^d}{\frac{\rho(\bz)}{|\bz|^2} \bigg\lbrace \big( w^{ij}z_i z_j \big)^2 + 2 \big( w^{ij}z_i z_j \big) \Big( w^{ij} \big( \ba^{ij}(\bq) - \ba^{ij}(\bq-\bz) \big) \cdot \bz \Big) \\
	&\qquad \qquad \qquad \qquad + \Big( w^{ij} \big( \ba^{ij}(\bq) - \ba^{ij}(\bq-\bz) \big) \cdot \bz \Big)^2  \bigg\rbrace }{\bz}{\bq} \\
	&= \frac{\alpha_1}{2} \iintdm{\bbT^d}{\bbR^d}{\frac{\rho(\bz)}{|\bz|^2} \bigg\lbrace \big( w^{ij}z_i z_j \big)^2 + \Big( w^{ij} \big( \ba^{ij}(\bq) - \ba^{ij}(\bq-\bz) \big) \cdot \bz \Big)^2  \bigg\rbrace }{\bz}{\bq} \\
	&\qquad + 2 \int_{\bbR^d} \frac{\rho(\bz)}{|\bz|^2} \big( w^{ij}z_i z_j \big) \left(  \left( \intdm{\bbT^d}{\big( \ba^{ij}(\bq) - \ba^{ij}(\bq-\bz) \big) }{\bq} \right) \cdot w^{ij} \bz \right) \, \rmd \bz\,. \\
\end{split}
\end{equation*}
By periodicity of $\ba^{ij}$, the last integral is identically zero. Thus,
\begin{equation*}
\begin{split}
\Vint{\widetilde{\mathcal{C}}\boldsymbol{\bbW},\boldsymbol{\bbW}} 
	&\geq \frac{\alpha_1}{2} \iintdm{\bbT^d}{\bbR^d}{\frac{\rho(\bz)}{|\bz|^2} \bigg\lbrace \big( w^{ij}z_i z_j \big)^2 + \Big( w^{ij} \big( \ba^{ij}(\bq) - \ba^{ij}(\bq-\bz) \big) \cdot \bz \Big)^2  \bigg\rbrace }{\bz}{\bq} \\
	&\geq \frac{\alpha_1}{2} \iintdm{\bbT^d}{\bbR^d}{\frac{\rho(\bz)}{|\bz|^2}  \big( w^{ij}z_i z_j \big)^2 }{\bz}{\bq} = \frac{\alpha_1}{2} \intdm{\bbR^d}{\frac{\rho(\bz)}{|\bz|^2}  \left| \Vint{\boldsymbol{\bbW} \bz,\bz} \right|^2 }{\bz}\,.
\end{split}
\end{equation*}
Now, define $\cN := \{ \boldsymbol{\bbW} \, : \, {\bbW} \text{ symmetric, } |\boldsymbol{\bbW}| = 1 \}$.
Then $\cN$ is a compact set of $\bbM_d(\bbR)$, and since $\boldsymbol{\bbW} \mapsto \frac{\alpha_1^2}{2} \int_{\bbR^d} \frac{\rho(\bz)}{|\bz|^2}  \left| \Vint{\boldsymbol{\bbW} \bz,\bz} \right|^2 \, \rmd \bz$ is a continuous function on $\cN$ it suffices to show that $\frac{\alpha_1}{2} \int_{\bbR^d} \frac{\rho(\bz)}{|\bz|^2}  \left| \Vint{\boldsymbol{\bbW} \bz,\bz} \right|^2 \, \rmd \bz > 0$ for every $\boldsymbol{\bbW} \in \cN$. Assume the contrary, that $\int_{\bbR^d} \frac{\rho(\bz)}{|\bz|^2}  \left| \Vint{\boldsymbol{\bbW} \bz,\bz} \right|^2 \, \rmd \bz = 0$ for some $\bbW \in \cN$. Then $\Vint{\bbW \bz,\bz}= 0 $ for every $\bz \in \supp \rho$. We will show that $\bbW \equiv 0$, a contradiction since $\bbW \in \cN$. To begin, note that since $\cJ$ is an open subset of $\bbS^{d-1}$ there exists a vector ${\bfs \nu} \in \bbS^{d-1}$ and a number $\beta \in (0,1)$ such that
\begin{equation*}
\cA := \left\lbrace \bz \in \bbR^d \, : \, \frac{\bz}{|\bz|} \cdot {\bfs \nu} > 1- \beta  \right\rbrace \Subset \Lambda\,.
\end{equation*}
Since $\cA$ is an open cone, we can choose a basis $\{ \bp_j \}$ for $\bbR^d$ such that $\bp_j \in \cA$ for each $j$. Since $\cA$ is a convex set, any convex combination of two vectors $\bp_j$ and $\bp_k$ also belongs to $\cA$, and so for any $\beta \in (0,1)$
\begin{equation*}
\Vint{\bbW \bp_j,\bp_j} = 0\,, \quad \Vint{ \bbW (\beta \bp_j + (1-\beta) \bp_k),(\beta \bp_j + (1-\beta) \bp_k)} = 0\,, \quad j, k \in \{1, \ldots, d \}\,.
\end{equation*}
By polarizing the previous identity and using the fact that $\bbW$ is symmetric,
\begin{equation*}
\begin{split}
0 &= \beta^2 \Vint{\bbW \bp_j, \bp_j} + 2 \beta (1-\beta) \Vint{\bbW \bp_j, \bp_k} + (1-\beta)^2 \Vint{\bbW \bp_k, \bp_k} \\
& = \Vint{\bbW\bp_j,\bp_k}\,, \quad j, k \in \{1, \ldots, d \}\,.
\end{split}
\end{equation*}
Since $\{ \bp_j \}$ is a basis for $\bbR^d$ we conclude that $\bbW \equiv {\bf 0}$. Thus the lower-bound estimate in \eqref{eq:ElasticityTensor2} is proved. To prove the upper bound in \eqref{eq:ElasticityTensor2} we use the formula \eqref{FormulaForC2}. It suffices to show that the double integral defining $\widetilde{c}^{ijk \ell}$ converges absolutely. This is the case; we have
\begin{equation*}
\begin{split}
| \widetilde{c}^{ijk \ell} | & \leq  \frac{\alpha_2}{2} \int_{\bbT^d} \intdm{\bbR^d}{ \rho(\bq-\by)  |\bq - \by|^2}{\by} \, \rmd \bq + \alpha_2 \hspace{-2pt} \int_{\bbT^d} \intdm{\bbR^d}{\rho(\bq-\by)  |\bq-\by| |\ba^{k \ell}(\by)|}{\by} \, \rmd \bq \\
	&= \frac{a_2 \alpha_2}{2} + \alpha_2 \int_{\bbR^d} |\bz| \rho(\bz) \int_{\bbT^d} |\ba^{k \ell} (\bq-\bz)| \, \rmd \bq \, \rmd \bz \leq \frac{a_2 \alpha_2}{2} + \alpha_2 a_1 a_2 \Vnorm{\ba^{k \ell}}_{L^2(\bbT^d)}\,,
\end{split}
\end{equation*}
which is finite for every $k$ and $\ell$. 
It remains to show that the alternate expression  given in \eqref{alternative-for-C} for $\widetilde{c}^{ijk \ell}$ is equivalent to \eqref{FormulaForC2}. To that end, 
expanding the expression on the right-hand side of \eqref{alternative-for-C}
\begin{equation*}
\begin{split}
\frac{1}{2} & \int_{\bbT^d}  \int_{\bbR^d} \frac{\rho(\bq-\by)}{|\bq-\by|^2}   {\mu_s}(\bq,\by) \Big( (q_i-y_i)(q_j-y_j)(q_k-y_k)(q_{\ell}-y_{\ell}) \Big) \, \rmd \by \, \rmd \bq  \\
	& + \frac{1}{2} \int_{\bbT^d} \int_{\bbR^d}  \frac{\rho(\bq-\by) }{|\bq-\by|^2} {\mu_s}(\bq,\by)(q_k-y_k)(q_{\ell}-y_{\ell}) \big( \ba^{ij}(\bq) - \ba^{ij}(\by) \big) \cdot (\bq-\by)  \, \rmd \by \, \rmd \bq  \\
	&  + \frac{1}{2} \int_{\bbT^d} \int_{\bbR^d} \frac{\rho(\bq-\by) }{|\bq-\by|^2} {\mu_s}(\bq,\by)(q_i-y_i)(q_j-y_j) \big( \ba^{k \ell}(\bq) - \ba^{k \ell}(\by) \big) \cdot (\bq-\by) \, \rmd \by \, \rmd \bq  \\
	& + \frac{1}{2} \int_{\bbT^d} \int_{\bbR^d} \frac{\rho(\bq-\by) }{|\bq-\by|^2} {\mu_s}(\bq,\by)\Big( \big( \ba^{ij}(\bq) - \ba^{ij}(\by) \big) \cdot (\bq-\by) \Big) \\
	&\qquad \qquad \qquad \times \Big( \big( \ba^{k \ell}(\bq) - \ba^{k \ell}(\by) \big) \cdot (\bq-\by) \Big)\, \rmd \by \, \rmd \bq := \mathrm{I} + \mathrm{II} + \mathrm{III} + \mathrm{IV}\,.
\end{split}
\end{equation*}
Note that by \eqref{eq:bOddFxn} for any $i$, $j$, $k$ and $\ell$
\begin{multline}\label{eq:ElasticityTensorProof1}
\frac{1}{2} \int_{\bbT^d} \int_{\bbR^d} \frac{\rho(\bq-\by)}{|\bq-\by|^2} {\mu_s}(\bq,\by) (q_i-y_i)(q_j-y_j) \big( \ba^{k \ell}(\bq)  \cdot (\bq-\by) \big) \, \rmd \by \, \rmd \bq \\
= - \frac{1}{2} \int_{\bbT^d} \int_{\bbR^d} \frac{\rho(\bq-\by) }{|\bq-\by|^2}{\mu_s}(\bq,\by) (q_i-y_i)(q_j-y_j) \big( \ba^{k \ell}(\by) \cdot (\bq-\by) \big) \, \rmd \by \, \rmd \bq\,.
\end{multline}
Therefore,
\begin{equation*}
\begin{split}
\mathrm{III} = - \int_{\bbT^d} \int_{\bbR^d} \frac{\rho(\bq-\by) }{|\bq-\by|^2} {\mu_s}(\bq,\by)(q_i-y_i)(q_j-y_j) \big( \ba^{k \ell}(\by) \cdot (\bq-\by) \big) \, \rmd \by \, \rmd \bq\,,
\end{split}
\end{equation*}
and so we see by \eqref{FormulaForC2} that $\widetilde{c}_{ijk \ell} = \mathrm{I} + \mathrm{III}$. We now show that $\mathrm{IV} = - \mathrm{II}$. Expanding $\mathrm{IV}$,
\begin{equation*}
\begin{split}
\mathrm{IV} &= \frac{1}{2} \int_{\bbT^d} \int_{\bbR^d} \hspace{-1pt} \frac{\rho(\bq-\by) }{|\bq-\by|^2} {\mu_s}(\bq,\by)\Big( \big( \ba^{k \ell}(\bq) - \ba^{k \ell}(\by) \big) \hspace{-1pt} \cdot \hspace{-1pt} (\bq-\by) \Big)  \big( \ba^{ij}(\bq)  \hspace{-1pt} \cdot \hspace{-1pt} (\bq-\by) \big) \, \rmd \by \, \rmd \bq \\
	& \,\, - \frac{1}{2} \int_{\bbT^d} \int_{\bbR^d} \hspace{-1pt} \frac{\rho(\bq-\by) }{|\bq-\by|^2} {\mu_s}(\bq,\by)\Big( \big( \ba^{k \ell}(\bq) - \ba^{k \ell}(\by) \big) \hspace{-1pt} \cdot \hspace{-1pt} (\bq-\by) \Big)  \big( \ba^{ij}(\by) \hspace{-1pt} \cdot \hspace{-1pt} (\bq-\by) \big) \, \rmd \by \, \rmd \bq \\
	&:= (i) + (ii)\,.
\end{split}
\end{equation*}
Since $\ba^{k \ell}$ solves \eqref{eq:CellProblem},
\begin{equation}\label{eq:ElasticityTensorProof2}
\begin{split}
(i) &= \frac{1}{2} \int_{\bbT^d} \ba^{ij}(\bq) \hspace{-1pt} \cdot \hspace{-1pt} \bigg[ \hspace{-1pt} \int_{\bbR^d} \hspace{-1pt} \frac{\rho(\bq-\by) }{|\bq-\by|^2} {\mu_s}(\bq, \by)\Big( \big( \ba^{k \ell}(\bq) - \ba^{k \ell}(\by) \big) \hspace{-1pt} \cdot \hspace{-1pt} (\bq-\by) \Big) (\bq-\by) \, \rmd \by \hspace{-1pt} \bigg] \rmd \bq \\
	&= - \frac{1}{2} \int_{\bbT^d} \ba^{ij}(\bq) \cdot \left[ \int_{\bbR^d} \frac{\rho(\bq-\by) }{|\bq-\by|^2} {\mu_s}(\bq, \by)\Big( (q_k-y_k)(q_{\ell}-y_{\ell}) \Big) (\bq-\by) \, \rmd \by \right] \, \rmd \bq \\
	&= - \frac{1}{2} \int_{\bbT^d}  \int_{\bbR^d} \frac{\rho(\bq-\by) }{|\bq-\by|^2}{\mu_s}(\bq, \by) (q_k-y_k)(q_{\ell}-y_{\ell}) \big( \ba^{ij}(\bq) \cdot (\bq-\by) \big) \, \rmd \by  \, \rmd \bq\,. \\
\end{split}
\end{equation}
Similarly to $(i)$ but additionally using \eqref{eq:aEvenFxn} and \eqref{eq:bOddFxn},
{\small
\begin{equation}\label{eq:ElasticityTensorProof3}
\begin{split}
(ii) &= - \frac{1}{2} \int_{\bbT^d} \int_{\bbR^d} \frac{\rho(\bq-\by) }{|\bq-\by|^2} {\mu_s}(\bq, \by)\Big( \big( \ba^{k \ell}(\bq) - \ba^{k \ell}(\by) \big) \cdot (\bq-\by) \Big)  \Big( \big( \ba^{ij}(\by) \cdot (\bq-\by) \Big) \, \rmd \by \, \rmd \bq \\
	&\EquationReference{\eqref{eq:aEvenFxn}}{=} - \frac{1}{2} \int_{\bbT^d} \int_{\bbR^d} \frac{\rho(\bq-\by) }{|\bq-\by|^2} {\mu_s}(\bq, \by)\Big( \big( \ba^{k \ell}(\by) - \ba^{k \ell}(\bq) \big) \cdot (\bq-\by) \Big)  \Big( \big( \ba^{ij}(\bq) \cdot (\bq-\by) \Big) \, \rmd \by \, \rmd \bq \\
	&= - \frac{1}{2} \int_{\bbT^d}  \ba^{ij}(\bq) \cdot \left( \int_{\bbR^d} \frac{\rho(\bq-\by) }{|\bq-\by|^2} {\mu_s}(\bq, \by)\Big( \big( \ba^{k \ell}(\by) - \ba^{k \ell}(\bq) \big) \cdot (\bq-\by) \Big) (\bq-\by) \, \rmd \by \right) \, \rmd \bq \\
	&\EquationReference{\eqref{eq:CellProblem}}{=} - \frac{1}{2} \int_{\bbT^d} \ba^{ij}(\bq) \cdot \left( \int_{\bbR^d} \frac{\rho(\bq-\by) }{|\bq-\by|^2} {\mu_s}(\bq, \by)\Big( (q_k-y_k)(q_{\ell}-y_{\ell}) \Big) (\bq-\by) \, \rmd \by \right) \, \rmd \bq \\
	&= - \frac{1}{2} \int_{\bbT^d}  \int_{\bbR^d} \frac{\rho(\bq-\by) }{|\bq-\by|^2} {\mu_s}(\bq, \by)(q_k-y_k)(q_{\ell}-y_{\ell}) \big( \ba^{ij}(\bq) \cdot (\bq-\by) \big) \, \rmd \by  \, \rmd \bq \\
	&\EquationReference{\eqref{eq:bOddFxn}}{=}  \frac{1}{2} \int_{\bbT^d}  \int_{\bbR^d} \frac{\rho(\bq-\by) }{|\bq-\by|^2} {\mu_s}(\bq, \by)(q_k-y_k)(q_{\ell}-y_{\ell}) \big( \ba^{ij}(\by) \cdot (\bq-\by) \big) \, \rmd \by  \, \rmd \bq\,. \\
\end{split}
\end{equation}
}
Thus using \eqref{eq:ElasticityTensorProof2} and \eqref{eq:ElasticityTensorProof3}
\begin{equation*}
\mathrm{IV} = (i) + (ii) = - \mathrm{II}\,.
\end{equation*}
The proof is complete.
\end{proof}

\appendix

\section{The elasticity tensor in the absence of homogeneities}

\begin{proof}[Proof of \eqref{eq:LameTensor}]
If we assume that $\mu_s \equiv 1$ and $\rho(\bz) := \widetilde{\rho}(|\bz|)$, then the integrand defining $\bh^{k\ell}$ in \eqref{eq:CellProbData} is an odd function for every $k$ and $\ell$, and thus $\bh^{k \ell} = {\bf 0}$ for all $k$, $\ell$. Thus $(\bbK-\bbG)\ba^{k \ell}=\bh^{k \ell} = { \bf 0 }$ by \eqref{eq:CellProblem-operatorform}. Therefore, it follows from Proposition \ref{prop:KerOfK-G} and \eqref{eq:UniquenessCondFora} that
%since the kernel of $\bbK - \bbG$ consists of constant functions, and since we enforce the condition that $\int_{\bbT^d} \ba^{k \ell}(\bq) \, \rmd \bq = { \bf 0 }$, it follows that 
$\ba^{k \ell} = {\bf 0}$ for every $k$ and $\ell$.
Then using the alternate expression for $\widetilde{c}^{ijk\ell}$ in \eqref{alternative-for-C} and changing coordinates,
\begin{equation*}
\begin{split}
\widetilde{c}^{ijk \ell} &= \frac{1}{2} \int_{\bbT^d} \int_{\bbR^d} \frac{\rho(\bq-\by)}{|\bq-\by|^2} (q_i-y_i)(q_j-y_j)  (q_k-y_k)(q_{\ell}-y_{\ell}) \, \rmd \by \, \rmd \bq \\
	&= \frac{1}{2} \int_{\bbR^d} \frac{\rho(\bz)}{|\bz|^2} z_i z_j z_k z_{\ell} \, \rmd \bz\,.
\end{split}
\end{equation*}
Writing in polar coordinates and using the definition of $a_2$,
\begin{equation*}
\widetilde{c}^{ijk \ell} = \frac{1}{2} \int_0^{\infty} \widetilde{\rho}(r) r^{d+1} \, \rmd r \intdm{\bbS^{d-1}}{w_i w_j w_k w_{\ell} }{\sigma(\bw)} = \frac{a_2}{2} \fint_{\bbS^{d-1}} w_i w_j w_k w_{\ell} \, \rmd \sigma(\bw)\,.
\end{equation*}
Note that, by using rotations and changes of coordinates, the value of the integral defining $\widetilde{c}^{ijk \ell}$ is $0$ unless any two pairs of indices are equal, or if $i=j=k=\ell$. Therefore,
\begin{equation*}
\widetilde{c}^{ijk \ell} = \frac{a_2}{2} \big( p_{ik} \delta_{ij} \delta_{k \ell} + p_{ij} \delta_{i k } \delta_{j \ell} + p_{ij} \delta_{i \ell} \delta_{j k} \big)\,,
\end{equation*}
where for $i,j \in \{1, \ldots, d \}$
\begin{equation*}
p_{ij} := \fint_{\bbS^{d-1}} w_i^2 w_j^2 \, \rmd \sigma(\bw)\,, \quad i \neq j\,, \qquad \text{ and } \qquad  p_{ii} := \frac{1}{3} \fint_{\bbS^{d-1}} w_i^4 \, \rmd \sigma(\bw)\,.
\end{equation*}
By using the integral values
\begin{equation*}
\fint_{\bbS^{d-1}} w_i^4 \, \rmd \sigma(\bw) = \frac{3}{d(d+2)} \,, \qquad \fint_{\bbS^{d-1}} w_i^2 w_j^2 \, \rmd \sigma(\bw) = \frac{1}{d(d+2)}\,, \, i \neq j\,,
\end{equation*}
(see \cite{mengesha2015VariationalLimit}) we see that $\widetilde{c}^{ijkl}$ is exactly of the form \eqref{eq:LameTensor}.
\end{proof}

\bibliographystyle{siamplain}
\bibliography{References}

\begin{thebibliography}{10}

\bibitem{Alali2012Multiscale}
{\sc B.~Alali and R.~Lipton}, {\em Multiscale dynamics of heterogeneous media
  in the peridynamic formulation}, Journal of Elasticity, 106 (2012),
  pp.~71--103, \url{https://doi.org/10.1007/s10659-010-9291-4},
  \url{https://doi.org/10.1007/s10659-010-9291-4}.

\bibitem{Allaire-Twoscale}
{\sc G.~Allaire}, {\em Homogenization and two-scale convergence}, SIAM Journal
  on Mathematical Analysis, 23 (1992), pp.~1482--1518,
  \url{https://doi.org/10.1137/0523084}, \url{https://doi.org/10.1137/0523084},
  \url{https://arxiv.org/abs/https://doi.org/10.1137/0523084}.

\bibitem{bonder2017h}
{\sc J.~F. Bonder, A.~Ritorto, and A.~M. Salort}, {\em H-convergence result for
  nonlocal elliptic-type problems via {T}artar's method}, SIAM Journal on
  Mathematical Analysis, 49 (2017), pp.~2387--2408.

\bibitem{Brezis-book}
{\sc H.~Brezis}, {\em Functional analysis, {S}obolev spaces and partial
  differential equations}, Universitext, Springer, New York, 2011.

\bibitem{DoKi09}
{\sc H.~Dong and D.~Kim}, {\em Parabolic and elliptic systems with {VMO}
  coefficients}, Methods and Applications of Analysis, 16 (2009), pp.~365--388.

\bibitem{du2016multiscale}
{\sc Q.~Du, R.~Lipton, and T.~Mengesha}, {\em Multiscale analysis of linear
  evolution equations with applications to nonlocal models for heterogeneous
  media}, ESAIM: Mathematical Modelling and Numerical Analysis, 50 (2016),
  pp.~1425--1455.

\bibitem{Emmrich-Weckner2007}
{\sc E.~Emmrich and O.~Weckner}, {\em On the well-posedness of the linear
  peridynamic model and its convergence towards the {N}avier equation of linear
  elasticity}, Commun. Math. Sci., 5 (2007), pp.~851--864,
  \url{http://projecteuclid.org/euclid.cms/1199377554}.

\bibitem{GiaquintaBook}
{\sc M.~Giaquinta and L.~Martinazzi}, {\em An introduction to the regularity
  theory for elliptic systems, harmonic maps and minimal graphs}, vol.~11 of
  Appunti. Scuola Normale Superiore di Pisa (Nuova Serie) [Lecture Notes.
  Scuola Normale Superiore di Pisa (New Series)], Edizioni della Normale, Pisa,
  second~ed., 2012, \url{https://doi.org/10.1007/978-88-7642-443-4},
  \url{https://doi.org/10.1007/978-88-7642-443-4}.

\bibitem{jikov2012homogenization}
{\sc V.~V. Jikov, S.~M. Kozlov, and O.~A. Oleinik}, {\em Homogenization of
  differential operators and integral functionals}, Springer Science \&
  Business Media, 2012.

\bibitem{krylov1996lectures}
{\sc N.~V. Krylov}, {\em Lectures on elliptic and parabolic equations in
  {H}{\"o}lder spaces}, no.~12, American Mathematical Soc., 1996.

\bibitem{MengeshaDuElasticity}
{\sc T.~Mengesha and Q.~Du}, {\em Nonlocal constrained value problems for a
  linear peridynamic {N}avier equation}, J. Elasticity, 116 (2014), pp.~27--51,
  \url{https://doi.org/10.1007/s10659-013-9456-z},
  \url{https://doi.org/10.1007/s10659-013-9456-z}.

\bibitem{DuMengeshaMultiscale15}
{\sc T.~Mengesha and Q.~Du}, {\em Multiscale analysis of linearized
  peridynamics}, Communications in Mathematical Sciences, 13 (2015),
  pp.~1193--1218.

\bibitem{mengesha2015VariationalLimit}
{\sc T.~Mengesha and Q.~Du}, {\em On the variational limit of a class of
  nonlocal functionals related to peridynamics}, Nonlinearity, 28 (2015),
  p.~3999, \url{http://stacks.iop.org/0951-7715/28/i=11/a=3999}.

\bibitem{Murat-Tartar}
{\sc F.~Murat and L.~Tartar}, {\em H--convergence}, Topics in the Mathematical
  Modeling of Composite Materials, Progr. Nonlinear Differential Equations
  Appl.,  (1997).

\bibitem{Nguetseng-twoscale}
{\sc G.~Nguetseng}, {\em A general convergence result for a functional related
  to the theory of homogenization}, SIAM Journal on Mathematical Analysis, 20
  (1989), pp.~608--623, \url{https://doi.org/10.1137/0520043},
  \url{https://doi.org/10.1137/0520043},
  \url{https://arxiv.org/abs/https://doi.org/10.1137/0520043}.

\bibitem{Piatnitski-Zhizhina}
{\sc A.~Piatnitski and E.~Zhizhina}, {\em Periodic homogenization of nonlocal
  operators with a convolution-type kernel}, SIAM Journal on Mathematical
  Analysis, 49 (2017), pp.~64--81, \url{https://doi.org/10.1137/16M1072292},
  \url{https://doi.org/10.1137/16M1072292},
  \url{https://arxiv.org/abs/https://doi.org/10.1137/16M1072292}.

\bibitem{schwab2010periodic}
{\sc R.~W. Schwab}, {\em Periodic homogenization for nonlinear
  integro-differential equations}, SIAM Journal on Mathematical Analysis, 42
  (2010), pp.~2652--2680.

\bibitem{Silling2000}
{\sc S.~A. Silling}, {\em Reformulation of elasticity theory for
  discontinuities and long-range forces}, J. Mech. Phys. Solids, 48 (2000),
  pp.~175--209, \url{https://doi.org/10.1016/S0022-5096(99)00029-0},
  \url{https://doi.org/10.1016/S0022-5096(99)00029-0}.

\bibitem{Silling2007}
{\sc S.~A. Silling, M.~Epton, O.~Weckner, J.~Xu, and E.~Askari}, {\em
  Peridynamic states and constitutive modeling}, J. Elasticity, 88 (2007),
  pp.~151--184, \url{https://doi.org/10.1007/s10659-007-9125-1},
  \url{https://doi.org/10.1007/s10659-007-9125-1}.

\bibitem{Silling2008}
{\sc S.~A. Silling and R.~B. Lehoucq}, {\em Convergence of peridynamics to
  classical elasticity theory}, Journal of Elasticity, 93 (2008), p.~13,
  \url{https://doi.org/10.1007/s10659-008-9163-3},
  \url{https://doi.org/10.1007/s10659-008-9163-3}.

\bibitem{Tartar}
{\sc L.~Tartar}, {\em Compensated compactness and applications to partial
  differential equations}, Nonlinear Analysis and Mechanics Res. Notes in
  Math., 39 (1979).

\bibitem{Du-Zhou2010}
{\sc K.~Zhou and Q.~Du}, {\em Mathematical and numerical analysis of linear
  peridynamic models with nonlocal boundary conditions}, SIAM J. Numer. Anal.,
  48 (2010), pp.~1759--1780, \url{https://doi.org/10.1137/090781267},
  \url{https://doi.org/10.1137/090781267}.

\end{thebibliography}
\end{document}